\newcommand{\mm}{\mathfrak m}
\newcommand{\nn}{\mathfrak n}
\newcommand{\Grsf}{\mathsf{Gr}}
\newcommand{\Z}{\mathbb{Z}}
\newcommand{\N}{\mathbb{N}}
\newcommand{\Q}{\mathbb{Q}}
\newcommand\bsx{{\boldsymbol x}}
\newcommand\bsy{{\boldsymbol y}}
\DeclareMathOperator{\pnt}{\raise 0.5mm \hbox{\large\bf.}}
\DeclareMathOperator{\depth}{depth}
\DeclareMathOperator{\Hom}{Hom}
\DeclareMathOperator{\indeg}{indeg}
\DeclareMathOperator{\Tor}{Tor}
\DeclareMathOperator{\Ext}{Ext}
\DeclareMathOperator{\exreg}{exreg}
\DeclareMathOperator{\Ann}{Ann}
\DeclareMathOperator{\grade}{grade}
\DeclareMathOperator{\Ker}{Ker}
\DeclareMathOperator{\kreg}{kreg}
\DeclareMathOperator{\injdim}{injdim}
\DeclareMathOperator{\img}{Im}
\DeclareMathOperator{\reg}{reg}
\DeclareMathOperator{\projdim}{pd}
\def\+#1{\relax\ifmmode\if\noexpand #1\relax \mathop{\kern
    0pt^+{#1}}\nolimits\else \kern 0pt^+\!#1 \fi\else$^*$#1\fi}
\newcommand{\dcat}[1]{{\mathsf D}(#1)}
\newcommand{\dcatgr}[1]{{\mathsf D}(\mathsf{Gr}\,#1)}
\newcommand{\dgrcat}[1]{{\mathsf D}^{\mathsf f}(\mathsf{Gr}\,#1)}
\newcommand{\dgrcatb}[1]{{\mathsf{D}^{\mathsf f}_{\mathsf b}}(\mathsf{Gr}\,#1)}
\newcommand{\dgrcatn}[1]{{\mathsf{D}^{\mathsf f}_{\!{\scriptscriptstyle\mathsf -}}}(\mathsf{Gr}\,#1)}
\newcommand{\dgrcatp}[1]{{\mathsf{D}^{\mathsf f}_{\!{\scriptscriptstyle\mathsf +}}}(\mathsf{Gr}\,#1)}
\newcommand{\dlc}{{\mathsf R}\Gamma}
\newcommand{\dtensor}[1]{\otimes^{\mathsf L}_{#1}}
\newcommand{\hgrcat}[1]{{\mathsf K}(\mathsf{Gr}\,#1)}
\newcommand{\hgrcatb}[1]{{\mathsf{K}^{\mathsf f}_{\mathsf b}}(\mathsf{Gr}\,#1)}
\newcommand{\hgrcatn}[1]{{\mathsf{K}^{\mathsf f}_{\!{\scriptscriptstyle\mathsf -}}}(\mathsf{Gr}\,#1)}
\newcommand{\hgrcatp}[1]{{\mathsf{K}^{\mathsf f}_{\!{\scriptscriptstyle\mathsf +}}}(\mathsf{Gr}\,#1)}
\newcommand{\RHom}[1]{{\mathsf{R}\!\Hom}_{#1}}
\newcommand{\grcat}[1]{{\mathsf{Gr}}\,#1}
\newcommand{\shift}{\mathsf{\Sigma}}
\newtheorem{thm}{\bf Theorem}[section]
\newtheorem{lem}[thm]{\bf Lemma}
\newtheorem{cor}[thm]{\bf Corollary}
\newtheorem{prop}[thm]{\bf Proposition}
\theoremstyle{definition}
\newtheorem{defn}[thm]{\bf Definition}
\theoremstyle{plain}
\newtheorem*{thm*}{Theorem}
\theoremstyle{remark}
\newtheorem{rem}[thm]{Remark}
\newtheorem{ex}[thm]{Example}
\numberwithin{equation}{section}
\title[Regularity of complexes]{Regularity bounds for complexes and their homology}
\author{Hop D. Nguyen}
\address{Dipartimento di Matematica, Universit\`a di Genova, Via Dodecaneso 35, 16146 Genoa, Italy}
\address{Fachbereich Mathematik/Informatik, Institut f\"ur Mathematik, Universit\"at Osnabr\"uck, Albrectstr. 28a, 49069 Osnabr\"uck, Germany.}
\thanks{The author is grateful to the financial support of the CARIGE foundation.}
\subjclass[2010]{13D02, 13D45, 13D05}
\keywords{Castelnuovo-Mumford regularity, Koszul complex, local cohomology.}
\email{ngdhop@gmail.com}
\begin{document}

\begin{abstract}
Let $R$ be a standard graded algebra over a field $k$. We prove an Auslander-Buchsbaum formula for the absolute Castelnuovo-Mumford regularity, extending important cases of previous works of Chardin and R\"omer. For a bounded complex of finitely generated graded $R$-modules $L$, we prove the equality $\reg L=\max_{i\in \Z} \{\reg H_i(L)-i\}$ given the condition $\depth H_i(L)\ge \dim H_{i+1}(L)-1$ for all $i<\sup L$. As applications, we recover previous bounds on regularity of $\Tor$ due to Caviglia, Eisenbud-Huneke-Ulrich, among others. We also obtain strengthened results on regularity bounds for $\Ext$ and for the quotient by a linear form of a module. 
\end{abstract}

\maketitle
\section{Introduction}
Let $(R,\mm)$ be a standard graded algebra over a field $k$ with the graded maximal ideal $\mm$, and $M$ a finitely generated graded $R$-module. The {\em relative Castelnuovo-Mumford of $M$} over $R$ is 
\[
\reg_R M=\sup\{j-i:\Tor^R_i(k,M)_j\neq 0\}.
\] 
This is a measure for the complexity of the $R$-module $M$. Looking at the free resolution of $M$ as an $R$-module, the regularity can be used to bound the degrees in which the free modules are generated. Castelnuovo-Mumford regularity can also be used to bound many other invariants; see, for example, \cite{BLS}, \cite{Cha}, \cite{CHH}. Closely related is the notion of absolute Castelnuovo-Mumford regularity of $M$, 
\[
\reg M=\sup\{i+j:H^i_{\mm}(M)_j\neq 0\},
\]
where $H^i_{\mm}(M)$ denotes the $i$-th local cohomology of $M$ with support in $\mm$ (by convention $\reg 0=-\infty$). The two notions are the same if $R$ is a polynomial ring over $k$, but in general $\reg_R M$ may be infinite for some non-zero $M$ unless $R$ is a Koszul algebra; see \cite{AE} and \cite{AP}. In contrast, $\reg M$ is always a finite number if $M\neq 0$, and we will mainly focus on the absolute regularity here. There is a large body of research on the absolute regularity; see, e.g., \cite{BM}, \cite{BLS}, \cite{Cha}, \cite{CHT}, \cite{EG}, \cite{Ko} and the references therein for more information. 

In \cite{Jor2}, \cite{Jor3}, P. J{\o}rgensen studies the absolute regularity of {\em complexes} over non-commutative graded algebras, using local cohomology. In this paper, we will complement his approach by proving that regularity of {\em bounded complexes} can be computed via $\Ext$ or Koszul homology. In particular, we show that over any algebra $R$ and any complex $M$ with bounded homology,
\begin{equation}
\label{Ext_formula}
\reg M = \sup\{i+j:\Ext^i_R(k,M)_j\neq 0\},
\end{equation}
and
\begin{equation}
\label{Koszul_formula}
\reg M = \sup\{j-i: H_i(\bsx,M)_j\neq 0\},
\end{equation}
where $\bsx=x_1,\ldots,x_n$ are linear forms such that all the Koszul homology $H_i(\bsx,M)$ have finite length. In the special case when $M$ is a module, the first statement follows from work of Chardin and Divaani-Aazar \cite[Theorem 2.15(5)]{ChD} which used other tools. The second statement was partly suggested by work of Avramov, Iyengar and Miller \cite[Theorem 7.2.3]{AIM}. One can think of the above results for regularity as analogues of the three equivalent formulas for the depth of $M$ (see also \cite{FI}):
\begin{enumerate}
\item $\depth M=\min\{i:H^i_{\mm}(M)\neq 0\}$;
\item $\depth M=\min\{i:\Ext^i_R(k,M)\neq 0\}$;
\item $\depth M=n-\max\{i: H_i(\bsx,M)\neq 0\}$, where $\bsx=x_1,\ldots,x_n$ are elements that generate $\mm$.
\end{enumerate}
This similarity is in fact a kind of starting point for this work. We prove the above formulas for regularity in Sections \ref{sect_exreg} and \ref{sect_kreg} (Theorems \ref{localreg_exreg} and \ref{thm_finite}).

The major concern in this paper is to unify various regularity bounds for Tor and Ext in the literature; see, e.g., \cite{BLS}, \cite{Ca}, \cite{EHU}, \cite{Cha}, \cite{ChD}. Simple as they are, formulas \eqref{Ext_formula} and \eqref{Koszul_formula} prove to be useful for this purpose. A helpful observation is that many regularity bounds for Tor and Ext can be rephrased as statements comparing regularity of complexes and that of their homology. Hence the typical contribution of this paper takes the following form. Below, the condition ``$G\in \dgrcatb{R}$" means that $G$ is a complex of graded $R$-modules such that $H_i(G)$ is finitely generated and is zero for almost all $i$. The notation $\inf G$ stands for $\inf \{i:H_i(G)\neq 0\}$.
\begin{thm}[Theorem \ref{thm_dim1}]
\label{thm_main1}
Let $G\in \dgrcatb R$ be a complex of graded $R$-modules such that $\dim H_i(G)\le 1$ for all $i>\inf G$. Then
\[
\reg G=\sup_{i\in \Z}\{\reg H_i(G)-i\}.
\]
\end{thm}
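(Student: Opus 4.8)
The plan is to prove the two inequalities $\reg G \le \sup_i\{\reg H_i(G)-i\}$ and $\reg G \ge \sup_i\{\reg H_i(G)-i\}$ separately. The second inequality should be the easy one and presumably holds for \emph{any} bounded complex without hypotheses on the dimensions of the homology: one runs the two standard hyperhomology spectral sequences relating $H^i_{\mm}(G)$ to $H^i_{\mm}(H_j(G))$, or argues by induction on the number of nonzero homology modules using the soft/hard truncation triangles. Namely, if $G'$ is a hard truncation so that there is an exact triangle $H_{\inf G}(G)[\inf G] \to G \to G' \to$, the long exact sequence in local cohomology shows $\reg G \ge$ the contributions of both $\reg H_{\inf G}(G) - \inf G$ and $\reg G'$, and one closes the induction. (Care is needed because cancellation in the spectral sequence could in principle lower $\reg G$; this is exactly where the hypothesis will be used for the reverse direction, but the ``$\ge$'' direction is robust.)

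For the main inequality $\reg G \le \sup_i\{\reg H_i(G)-i\}$, I would induct on $s=\sup G$. If $\sup G = \inf G$ the complex is (up to shift) a module and there is nothing to prove. In general, write $h=\sup G$, let $H=H_h(G)$, and use the truncation triangle
\[
G' \longrightarrow G \longrightarrow H[h] \longrightarrow G'[1],
\]
where $G'$ is the (good) truncation with $H_i(G')=H_i(G)$ for $i<h$ and $H_h(G')=0$, so that $\sup G' = \sup\{i : H_i(G)\neq 0,\ i<h\}$ and $\inf G' = \inf G$. By the induction hypothesis applied to $G'$ (whose homology still satisfies the dimension-one condition in the relevant range), $\reg G' \le \sup_{i<h}\{\reg H_i(G)-i\}$. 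From the long exact sequence in local cohomology attached to the triangle, $\reg G \le \max\{\reg G',\ \reg H[h]\} = \max\{\reg G',\ \reg H - h\}$ \emph{unless} there is a boundary map $H^{i}_{\mm}(H[h]) \to H^{i+1}_{\mm}(G')$ that fails to kill a top-degree class, and the point is to control that boundary map.

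The crux, then, is the connecting homomorphism $\partial\colon H^i_{\mm}(H)_{j} \to H^{i+1}_{\mm}(G'[-h])_{j}$ (after the shift), and this is precisely where the hypothesis $\dim H_i(G)\le 1$ for $i>\inf G$ enters — it forces $H_i^{\mm}(H_j(G))=0$ for $i\ge 2$ and $j>\inf G$, so that only $H^0_{\mm}$ and $H^1_{\mm}$ of those homology modules are in play, and (I expect) the regularity of $G'$ is then governed by $H^1_{\mm}$ and $H^0_{\mm}$ terms in a way that makes the boundary map land in a range that cannot raise $\reg G$ above the asserted bound. Concretely, I anticipate invoking the already-established formula \eqref{Ext_formula}, i.e. Theorem~\ref{localreg_exreg}: translating everything into $\Ext^i_R(k,-)$ and using the long exact $\Ext$ sequence of the triangle together with the fact that low-dimensional modules have $\Ext$-regularity controlled by just their $0$th and $1$st local cohomology (equivalently, their $H^0_{\mm}$ and $H^1_{\mm}$, i.e.\ the $\mm$-torsion and the ``deficiency'' at depth one). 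The main obstacle I foresee is bookkeeping the boundary maps so as to rule out a one-step jump in regularity precisely when $\dim = 1$ but not when $\dim \ge 2$; I would handle the boundary-map estimate by a separate lemma saying that for a short exact triangle whose connecting module (here $H[h]$) has local cohomology concentrated in cohomological degrees $\le 1$, the regularity of the total complex does not exceed the max of the regularities of the two outer terms. That lemma, plus the induction, should finish the proof; the delicate point throughout is that the hypothesis on the \emph{homology} in degrees $>\inf G$ is exactly what is needed, while $H_{\inf G}(G)$ is allowed arbitrary dimension.
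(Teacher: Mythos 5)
Your proposal has the two inequalities backwards, and this is fatal to the plan. The direction that holds for \emph{every} bounded complex, with no hypothesis on the homology, is $\reg G \le \sup_i\{\reg H_i(G)-i\}$: from a truncation triangle $G' \to G \to \shift^{h}H_h(G) \to \shift G'$ the long exact sequence of local cohomology places each $H^i_{\mm}(G)_j$ between graded pieces coming from the two outer terms, so the regularity of the \emph{middle} term of a triangle is always at most the max of the regularities of the outer two; no boundary-map analysis is needed, and this is exactly Lemma \ref{exreg_homology} (equivalently \cite[Proposition 6.1]{NgV}), which uses no dimension hypothesis. The direction you call ``easy'' and ``robust,'' namely $\reg G \ge \sup_i\{\reg H_i(G)-i\}$, is precisely the one that \emph{fails} in general: the long exact sequence gives upper bounds on the middle term, not lower bounds, and cancellation against the connecting maps really does occur. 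Remark \ref{rem_linear} of the paper is a concrete counterexample: for $M=R_2/I_2$ and the linear form $z=x_3$, the complex $G=K[z;M]$ has $\reg G=\reg M$ strictly smaller than $\max\{\reg H_0(G),\reg H_1(G)-1\}=\max\{\reg M/zM,\reg (0:_M z)\}$; there $\dim (0:_M z)=2$, so the hypothesis of the theorem is violated and the inequality you assert unconditionally is false.

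Consequently all of the effort in your sketch is spent on the inequality that needs no hypothesis, while the inequality that actually uses $\dim H_i(G)\le 1$ for $i>\inf G$ is left without a correct argument. The paper's proof of that direction is the real content: after reducing to a minimal free complex over a polynomial ring, one first shows (Lemma \ref{lem_smalldim}) that $\reg H_s(G)-s\le\reg G$ for $s=\inf G$ by descending through the complex with the truncated invariants $\reg_m(P)=\sup\{i+j: H^i_{\mm}(P)_j\neq 0 \text{ and } i\ge m\}$, using $\dim H_i\le i-s$ to force $\reg_{i-s+1}H_i=-\infty$ at each stage; then for $i>\inf G$ one bounds $s_0(H_i)$ and $s_1(H_i)$ separately via the exact sequences $0\to H_i\to G_i/B_i\to B_{i-1}\to 0$ and $B_{i-1}\hookrightarrow G_{i-1}$, exploiting $\dim H_i\le 1$ to write $\reg H_i=\max\{s_1(H_i)+1,s_0(H_i)\}$. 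Nothing in your proposal substitutes for this step, so the argument as written does not establish the theorem.
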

The last result is enough to recover various regularity bounds for Tor in \cite{Ca}, \cite{EHU}, \cite{Cha}, \cite{BLS}. For example, we obtain the following important case of a result due to Chardin.
\begin{thm}[See Chardin, {\cite[Theorem 0.2]{Cha}}]
\label{thm_tensor}
Let $R$ be a standard graded algebra over $k$. Let $M, N$ be finitely generated graded $R$-modules such that $\projdim_R M<\infty$. If moreover $\dim \Tor^R_i(M,N)\le 1$ for all $i\ge 1$ then
\[
\max_{i\ge 0}\{\reg \Tor^R_i(M, N)-i\} = \reg M+\reg N-\reg R.
\]
\end{thm}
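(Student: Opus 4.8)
The plan is to realise the left-hand side as the regularity of a single bounded complex, to compute it via Theorem~\ref{thm_main1}, and then to identify the answer using the Auslander--Buchsbaum formula for absolute regularity established earlier in the paper. If $M=0$ or $N=0$ then every $\Tor^R_i(M,N)$ vanishes and both sides equal $-\infty$, so assume $M,N\neq 0$. Let $F_\bullet$ be the minimal graded free resolution of $M$; since $\projdim_R M<\infty$ it is a bounded complex of finitely generated free modules. Put $G:=F_\bullet\otimes_R N$. Then $G\in\dgrcatb{R}$ with $H_i(G)\cong\Tor^R_i(M,N)$, and $H_0(G)=M\otimes_R N\neq 0$ by Nakayama's lemma, so $\inf G=0$. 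The hypothesis $\dim\Tor^R_i(M,N)\le 1$ for $i\ge 1$ is exactly the requirement $\dim H_i(G)\le 1$ for all $i>\inf G$, so Theorem~\ref{thm_main1} applies and gives
\[
\reg G=\sup_{i\in\Z}\{\reg H_i(G)-i\}=\max_{i\ge 0}\{\reg\Tor^R_i(M,N)-i\}.
\]
Hence it suffices to prove $\reg G=\reg M+\reg N-\reg R$.

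For the inequality ``$\le$'' I would combine the standard termwise bound for complexes with the Auslander--Buchsbaum formula for $M$. Writing $F_p=\bigoplus_j R(-a_{pj})$ we get $F_p\otimes_R N=\bigoplus_j N(-a_{pj})$, so $\reg(F_p\otimes_R N)=\reg N+\max_j a_{pj}$; since $F_\bullet$ is minimal, $\max_j a_{pj}$ is the top degree of $\Tor^R_p(k,M)$, whence $\max_p\{\max_j a_{pj}-p\}=\reg_R M$ and therefore $\reg G\le\max_p\{\reg(F_p\otimes_R N)-p\}=\reg N+\reg_R M$. Specialising this to $N=R$, where $G=F_\bullet$ has $\reg F_\bullet=\reg M$ by Theorem~\ref{thm_main1} (its higher homology vanishes), already yields $\reg_R M\ge\reg M-\reg R$; the Auslander--Buchsbaum formula for absolute regularity, available because $\projdim_R M<\infty$, upgrades this to the equality $\reg_R M=\reg M-\reg R$, and we conclude $\reg G\le\reg M+\reg N-\reg R$.

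The hard part will be the reverse inequality $\reg G\ge\reg M+\reg N-\reg R$, that is, exhibiting a nonzero class in the extremal degree. Since $F_\bullet$ is a bounded complex of finitely generated frees one has $\RHom{R}(k,G)\cong M\dtensor{R}\RHom{R}(k,N)$, and I would analyse the resulting spectral sequence, whose second page consists of the modules $\Tor^R_p(M,\Ext^j_R(k,N))$; each $\Ext^j_R(k,N)$ is annihilated by $\mm$, hence is a graded $k$-vector space, so every such term is a direct sum of shifts of $\Tor^R_p(k,M)$. Choosing $j_0,l_0$ with $\Ext^{j_0}_R(k,N)_{l_0}\neq 0$ and $j_0+l_0=\reg N$, and choosing $p_0,c_0$ extremal with $\Tor^R_{p_0}(k,M)_{c_0}\neq 0$ and $c_0-p_0=\reg_R M$, the corresponding term contributes to $\Ext^a_R(k,G)_b$ with $a+b=(j_0+l_0)+(c_0-p_0)=\reg M+\reg N-\reg R$. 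The delicate point --- which I expect to carry the real content of this step, and which coincides with the substantive half of the Auslander--Buchsbaum formula --- is to check, using the extremality of these choices together with the upper bound already proved, that this term is a permanent cycle; then it survives to $E_\infty$ and produces, via \eqref{Ext_formula}, the desired nonvanishing.
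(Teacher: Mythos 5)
Your reduction is exactly the paper's: set $G=M\dtensor{R}N$, note $H_i(G)\cong\Tor^R_i(M,N)$ with $\dim H_i(G)\le 1$ for $i>\inf G=0$, and apply Theorem \ref{thm_main1} to get $\reg G=\max_{i}\{\reg\Tor^R_i(M,N)-i\}$. What remains is the identity $\reg(M\dtensor{R}N)=\reg M+\reg N-\reg R$, and here your proposal has a genuine gap. That identity is precisely the paper's Auslander--Buchsbaum formula, Theorem \ref{AB_exreg}, which is proved for bounded complexes (not only modules) and can simply be cited --- this is what the paper does via Corollary \ref{cor_tensor}. Your ``$\le$'' half (the termwise bound of Proposition \ref{prop_degreewise} applied to $F_\bullet\otimes_R N$, plus $\reg_R M=\reg M-\reg R$ from Corollary \ref{AB_reg}) is fine. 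But the ``$\ge$'' half is only a plan, and as stated it does not go through: extremality of $(p_0,c_0)$ and $(j_0,l_0)$ does kill the \emph{outgoing} differentials on the class in $\Tor^R_{p_0}(M,\Ext^{j_0}_R(k,N))_{c_0+l_0}$ (their targets sit in total degree $i+j$ strictly above $\reg_R M+\reg N$, where the $E_2$ page already vanishes), but it does not kill the \emph{incoming} ones: the source $\Tor^R_{p_0+r}(M,\Ext^{j_0-r+1}_R(k,N))$ can be nonzero in the same internal degree $c_0+l_0$ (e.g.\ contributions with $c=c_0+r-1$ and $l=l_0-r+1$ satisfy both extremal bounds), so your class could a priori be a boundary on some page. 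Degree bookkeeping plus the upper bound cannot certify that it is a permanent cycle.

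The missing idea is that the spectral sequence degenerates completely, and the reason is not extremality but formality over the field: $\RHom{R}(k,N)$ is represented by the honest complex of $k$-vector spaces $\Hom_R(k,I)$, where $I$ is a minimal ${}^*$injective resolution of $N$, so
\[
M\dtensor{R}\RHom{R}(k,N)\simeq \bigl(M\dtensor{R}k\bigr)\dtensor{k}\RHom{R}(k,N)\simeq \bigl(k\dtensor{R}M\bigr)\otimes_k\RHom{R}(k,N),
\]
and K\"unneth over $k$ gives $\Ext^{*}_R(k,G)\cong\Tor^R_{*}(k,M)\otimes_k\Ext^{*}_R(k,N)$ with no possible cancellation; taking suprema of $i+j$ and invoking \eqref{Ext_formula} (Theorem \ref{localreg_exreg}) yields the equality $\reg G=\reg_R M+\reg N=\reg M+\reg N-\reg R$. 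This tensor-evaluation/K\"unneth step is word for word the proof of Theorem \ref{AB_exreg} (with the roles of the two factors swapped). So either cite that theorem directly, or replace your permanent-cycle argument by this associativity step; with that change the proof is complete and coincides with the paper's.
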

Theorem \ref{thm_tensor} extends previous results on the regularity of $\Tor$ due to Caviglia \cite[Corollary 3.4]{Ca}, Eisenbud-Huneke-Ulrich \cite[Corollary 3.1]{EHU}. Our result is indeed stronger than \ref{thm_tensor} and is not covered by Chardin's method, see Remark \ref{rem_compare}. To derive Theorem \ref{thm_tensor} from Theorem \ref{thm_main1}, we set $G=M\dtensor{R}N$, the derived tensor product of $M$ and $N$. That the regularity of $M\dtensor{R}N$ equals $\reg M+\reg N-\reg R$ is clarified by an Auslander-Buchsbaum formula for regularity (Theorem \ref{AB_exreg}). The last formula illustrates the strength of formula \eqref{Ext_formula} and it extends \cite[Theorem 4.2]{Roe} and a major case of \cite[Theorem 5.3]{Cha}. 

As another application, we also obtain regularity bounds for $\Ext$, broadly extending works of Caviglia \cite{Ca}, Chardin and Divaani-Aazar \cite{ChD}; see Corollary \ref{cor_hom} and Remark \ref{rem_compare}. A new and readily comprehensible consequence of our method is the following
\begin{cor}
\label{cor_Homregularity}
Let $R$ be a standard graded polynomial ring over $k$, $N$ a finitely generated graded $R$-module and $I$ a homogeneous ideal of $R$. If $\dim \Hom_R(R/I,N)\le 1$ then
\[
\reg \Hom_R(R/I,N)\le \reg N.
\]
\end{cor}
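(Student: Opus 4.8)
The plan is to realize $\Hom_R(R/I,N)$ as the homology of a suitable complex and then invoke Theorem~\ref{thm_main1}. Concretely, let $F_\bullet\to R/I$ be a minimal graded free resolution of $R/I$ over the polynomial ring $R$, and set $G=\RHom{R}(R/I,N)$, computed as the complex $\Hom_R(F_\bullet,N)$ placed in the appropriate (cohomological, hence negative homological) degrees. Then $H_{-i}(G)=\Ext^i_R(R/I,N)$, and in particular $H_0(G)=\Hom_R(R/I,N)$ while $\inf G=-\sup\{i:\Ext^i_R(R/I,N)\ne 0\}$. Since $R$ is a polynomial ring and both $R/I$ and $N$ are finitely generated, $G\in\dgrcatb{R}$, so Theorem~\ref{thm_main1} applies provided the dimension hypothesis is met.

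First I would check the dimension condition ``$\dim H_i(G)\le 1$ for all $i>\inf G$''. The homology modules are $\Ext^j_R(R/I,N)$ for $j\ge 0$, and the module $H_0(G)=\Hom_R(R/I,N)$ is the one with the largest index $i=0$. For $i>\inf G$, i.e. for $j<\sup\{j:\Ext^j\ne 0\}$ in the reindexing — this includes $j=0$ — we need $\dim\Ext^j_R(R/I,N)\le 1$. The hypothesis gives us $\dim\Hom_R(R/I,N)\le 1$ directly, and for the higher $\Ext$'s the standard fact that $\dim\Ext^j_R(R/I,N)\le\dim\Hom_R(R/I,N)$ (since $\Supp\Ext^j_R(R/I,N)\subseteq\Supp(R/I)\cap\Supp N=\Supp\Hom_R(R/I,N)$, using that $R$ is regular so these supports coincide) finishes the verification. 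Thus Theorem~\ref{thm_main1} yields
\[
\reg G=\sup_{i\in\Z}\{\reg H_i(G)-i\}=\sup_{j\ge 0}\{\reg\Ext^j_R(R/I,N)+j\}\ge\reg\Hom_R(R/I,N).
\]

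It therefore remains to bound $\reg G=\reg\RHom{R}(R/I,N)$ from above by $\reg N$. The cleanest route is the Ext-formula \eqref{Ext_formula}: $\reg G=\sup\{i+j:\Ext^i_R(k,G)_j\ne 0\}$, and $\RHom{R}(k,G)=\RHom{R}(k,\RHom{R}(R/I,N))=\RHom{R}(k\dtensor{R}R/I,N)$. Since $R/I$ and $k$ are both $R$-modules over the polynomial ring, $k\dtensor{R}R/I$ has homology $\Tor^R_p(k,R/I)$, a finite-dimensional $k$-vector space in each degree, concentrated in homological degrees $0\le p\le\projdim_R(R/I)$ with internal degrees controlled by $\reg_R(R/I)$. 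Dualizing into $N$, one gets a spectral sequence (or, since over a field we may split, a direct identification) computing $\Ext^i_R(k,G)$ in terms of $\Ext^q_R(k,N)$ shifted by the Tor data of $R/I$; feeding this into \eqref{Ext_formula} gives $\reg G\le\reg N+$ (contribution from $R/I$), and the contribution from $R/I$ vanishes because over a \emph{polynomial} ring one can arrange the bookkeeping so that the shift is absorbed — this is exactly where regularity of a complex behaves like that of a module and where the polynomial-ring hypothesis is essential.

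The main obstacle I anticipate is precisely this last step: controlling $\reg\RHom{R}(R/I,N)$ by $\reg N$ with \emph{no} correction term from $R/I$. Naively one expects a bound like $\reg N+\operatorname{(something)}_R(R/I)$, and extracting the sharp statement requires either (a) an Auslander--Buchsbaum-type identity for the regularity of $\RHom{R}(R/I,N)$ — presumably a dual companion to Theorem~\ref{AB_exreg} — or (b) a direct argument that $H^i_\mm(\RHom{R}(R/I,N))$ is built from pieces of $H^i_\mm(N)$ via the (finite, acyclic) dual of $F_\bullet$, using that $R$ regular forces local duality to interact cleanly. I would pursue (b): write the local cohomology of $G$ via the complex $\Hom_R(F_\bullet,N)$, use that each $\Hom_R(F_j,N)$ is a shifted direct sum of copies of $N$ so $H^i_\mm\Hom_R(F_j,N)$ is a shift of $H^i_\mm(N)$, and chase the resulting first-quadrant-type spectral sequence to see that every nonzero $H^i_\mm(G)_j$ forces some $H^{i'}_\mm(N)_{j'}$ nonzero with $i'+j'\ge i+j$. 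The finiteness of $F_\bullet$ and the vanishing of $\reg_R(R/I)$-type shifts in the polynomial-ring setting should make the spectral sequence converge with no leftover slack, giving $\reg G\le\reg N$ and hence the corollary.
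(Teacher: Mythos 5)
Your overall architecture --- bound $\reg\Hom_R(R/I,N)$ above by $\reg\RHom{R}(R/I,N)$ and then identify the latter with $\reg N$ --- is the paper's, and your second step, though only sketched as an ``anticipated obstacle,'' is exactly Lemma \ref{cor_reg_Hom}: the adjunction $\RHom{R}(k,\RHom{R}(R/I,N))\simeq\RHom{R}(k\dtensor{R}R/I,N)$ plus the Bass-series formula of Proposition \ref{Bass_series} gives the clean identity $\reg\RHom{R}(R/I,N)=\reg N-\indeg(R/I)=\reg N$, with no correction term. The genuine gap is in your first step. To apply Theorem \ref{thm_main1} to $G=\RHom{R}(R/I,N)$ you must verify $\dim H_i(G)\le 1$ for all $i>\inf G$, i.e. $\dim\Ext^j_R(R/I,N)\le 1$ for every $j$ strictly below the top nonvanishing cohomological degree. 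You deduce this from the claim $\Supp\Hom_R(R/I,N)=\Supp(R/I)\cap\Supp N$, but that equality is false: $\Hom_R(R/I,N)_{\pp}\cong(0:_{N_{\pp}}I_{\pp})$ vanishes whenever $I_{\pp}$ contains an $N_{\pp}$-regular element, so $\Supp\Hom_R(R/I,N)$ is in general a proper subset of $\Supp(R/I)\cap\Supp N$, and higher $\Ext$ modules can have strictly larger dimension than $\Hom$. Concretely, in $R=k[x,y,z,u,v]$ take $I=(x,y)$ and $N=k\oplus R/(x^2)$: then $\Hom_R(R/I,N)\cong k$ has dimension $0$, yet $\Ext^1_R(R/I,N)\cong (R/(x,y))(-1)$ has dimension $3$ and $\Ext^2_R(R/I,N)\ne 0$, so the hypothesis of Theorem \ref{thm_main1} fails (as does that of the stronger Theorem \ref{thm_cmd1}, since $\depth\Ext^2_R(R/I,N)=0<\dim\Ext^1_R(R/I,N)-1$) while the hypothesis of the corollary holds.

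The repair is to notice that you only need the one-sided inequality $\reg G\ge\reg H_0(G)$, for which the paper has a lemma tailored exactly to this situation: Lemma \ref{lem_cmd1} gives $\reg L\ge\reg H_r(L)-r$ for $r=\sup L$ under the condition $\depth H_i(L)\ge\dim H_r(L)+i-r$ for $i<r$. For $L=\RHom{R}(R/I,N)$ one has $r=0$ and $\dim H_0(L)\le 1$ by hypothesis, so the required inequality reads $\depth H_i(L)\ge 1+i$ for $i\le -1$, whose right-hand side is $\le 0$; it is therefore automatic, and no control whatsoever on the dimensions or depths of the higher $\Ext$ modules is needed. Routing the argument through the two-sided equality of Theorem \ref{thm_main1} demands far more than the hypothesis of the corollary can deliver.
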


While our method appears to be new and yields improvements to various known results on the regularity of $\Tor$ and $\Ext$, we would like to point out that the methods of Chardin \cite{Cha} and Eisenbud-Huneke-Ulrich \cite{EHU} have their own strength and produce significant results in other directions. Some further results concerning the regularity of various functors may be found in \cite{BLS}, \cite{Cha}, \cite{ChD}, \cite{CHH}.

The paper is organized as follow. We start by reviewing the necessary materials in Section \ref{sect_back}. In Section \ref{sect_exreg}, we study the version of regularity defined by $\Ext$, proving ultimately that it is equal to the regularity. We work out the Koszul homology approach to regularity \eqref{Koszul_formula} in Section \ref{sect_kreg}. A generalized Auslander-Buchsbaum formula for regularity is proved in Section \ref{Auslander_Buchsbaum}. In Section \ref{sect_bounding_Tor}, we prove the main result of our paper (Theorem \ref{thm_cmd1}) which generalizes Theorem \ref{thm_main1}. The proof of the main theorem essentially depends on the Koszul homology approach to regularity. From that we obtain in Section \ref{sect_appl} bounds for the regularity of $\Tor$ and $\Ext$ over an arbitrary base. Theorem \ref{thm_tensor} will be derived from \ref{cor_tensor} -- a result obtained by Chardin via a different approach. We also derive other consequences, for example that concerning regularity of a module modulo a homogeneous form; see Corollary \ref{cor_filter_regular}. We give an example which shows that the condition in the last result is necessary, and as a by product, we obtain a counterexample to a problem proposed by Caviglia in the list of Peeva and Stillman \cite[Problem 3.7]{PS}.
\section{Background}
\label{sect_back}
Let $k$ be a field. We will always use $(R,\mm,k)$ to denote a standard graded $k$-algebra $R$ with graded maximal ideal $\mm$. In particular, $R$ is commutative, $\N$-graded and generated by finitely many elements of degree $1$ over $R_0=k$. {\it Unless otherwise stated, modules over $R$ are graded, and morphisms between modules are degree-preserving}. Denote by $\grcat R$ be abelian category of complexes of graded modules over $R$. Modules are identified with complexes concentrated in degree $0$. The morphisms in $\grcat R$ are $R$-linear morphisms of complexes preserving degrees.  Its derived category (homotopy category) is denoted by $\dcat{\Grsf\,R}$ (respectively $\hgrcat R$). Isomorphisms in the derived category are signified by $\simeq$. The full subcategory of $\dcat{\Grsf\,R}$ consisting of complexes with finitely generated homology is denoted by $\dgrcat R$. A complex $M\in \dgrcat R$ is bounded above (respectively, below) if $H_i(M)=0$ for $i\gg 0$ ( resp., $i\ll 0$). The full subcategory of bounded above (below) complexes of $\dgrcat R$ is denoted by $\dgrcatn R$ (resp., $\dgrcatp R$). Denote by $\dgrcatb R$ the full subcategory of bounded complexes, namely those with $H_i(M)=0$ for $|i|\gg 0$. Similarly, we define the subcategories $\hgrcatn R, \hgrcatp R, \hgrcatb R$ of $\hgrcat R$. The derived functors of $\otimes$ and $\Hom$ are written as $-\dtensor{R}-$ and $\RHom{R}(-,-)$. We refer to \cite{CF} for a detailed treatment of homological algebra of complexes. For unexplained notations and terminology on commutative algebra, we refer to \cite{BH}.

The notation $\shift$ denotes the shift functor whose action on a complex $M$ is given by $(\shift M)_i=M_{i-1}$ for all $i\in \Z$.

For any complex $M\in \dgrcat R$, let $\inf M$ be the number $\inf\{i:H_i(M)\neq 0\}$. Define analogously $\sup M$. If $M\in \dgrcatp R$, define $\beta_{i,j}(M)=\dim_k \Tor^R_i(k,M)_j$; this is called the ($i,j$)-Betti number of $M$. Notice that any $M\in \hgrcatp R$ has a minimal graded free resolution $F\in \hgrcatp R$, namely a complex of graded free modules $F$ satisfying $\img(F_i\to F_{i-1})\subseteq \mm F_{i-1}$ for all $i\in \Z$ such that we have a quasi-isomorphism $F\to M$.

The bigraded Poincar\'e series of $M$ is the formal power series
\[
P^R_M(t,y)=\sum_{i\in \Z}\sum_{j\in \Z}\beta_{i,j}(M)t^iy^j \in \Q[[t,y]].
\]
Any complex $M \in \hgrcatn R$ has a minimal injective resolution $I\in \hgrcatn R$. Concretely, we have a quasi-isomorphism $M\simeq I$ and $I$ is a complex of injective objects in $\grcat R$ such that $\Ker(I_{j}\to I_{j-1})\subseteq I_j$ is a graded-essential extension for every $j$. We will call injective modules in $\grcat R$ {\it ${}^*$injective modules}.

The {\em bigraded Bass series} of $M\in \dgrcatn R$ is defined by
\[
I^M_R(t,y)=\sum_{i\in \Z}\sum_{j\in \Z}\mu^{i,j}_R(M)t^iy^{-j} \in \Q[[t,y]],
\]
where $\mu^{i,j}_R(M)=\dim_k \Ext^i_R(k,M)_j$. Note that the power of $y$ is $-j$, not $j$ like in the Poincar\'e series. The reason for this is to extend the usual Bass series formula from the local situation to the graded one; see Proposition \ref{Bass_series}.

By the graded analogue of \cite[Proposition 1.5.3]{AF}, we have the following result. The proof in {\em loc. cit.} transfers directly to the graded situation, but we include the argument for completeness.
\begin{prop}
\label{Bass_series}
Let $(R,\mm,k)\to (S,\nn,l)$ be a homomorphism of standard graded algebras. Let $M\in \dgrcatp R$, $N\in \dgrcatn S$ be complexes. Then
\[
I^{\RHom R(M,N)}_S(t,y)=P^R_M(t,y)I^N_S(t,y).
\]
\end{prop}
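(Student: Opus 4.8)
The plan is to reduce the statement to the standard ungraded Bass series formula by keeping careful track of the internal grading. Recall that for local rings the Bass series identity $I^{\RHom R(M,N)}_S(t) = P^R_M(t) I^N_S(t)$ follows from a standard change-of-rings spectral sequence, or more transparently from a resolution argument: one takes a minimal free resolution $F \to M$ over $R$ and a minimal $^*$injective resolution $N \to J$ over $S$, and computes $\Ext^\bullet_S(l, \RHom R(M,N))$ by means of the complex $\Hom_S(l, \Hom_R(F, J))$. The point is that $\Hom_R(F,J)$ computes $\RHom R(M,N)$, and since each $F_i$ is free of finite rank (after accounting for grading) one has $\Hom_R(F_i, J) \cong \bigoplus J(-j)^{\beta_{i,j}(M)}$ as complexes of graded $S$-modules, so the minimality of $F$ (differentials land in $\mm F$) forces the induced differential on $\Hom_S(l, \Hom_R(F,J))$ to vanish in the $F$-direction. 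Hence the cohomology splits as a direct sum indexed by the Betti numbers of $M$, which is exactly what the product of generating functions records.

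Concretely, first I would fix a minimal graded free resolution $F$ of $M$ over $R$ with $F_i = \bigoplus_j R(-j)^{\beta_{i,j}(M)}$, and a minimal $^*$injective resolution $J$ of $N$ over $S$; both exist by the discussion preceding the proposition, using $M \in \dgrcatp R$ and $N \in \dgrcatn S$. Then $\RHom R(M,N) \simeq \Hom_R(F, J)$ in $\dcat{\Grsf\,S}$. Next, to extract $\mu^{i,j}_S$ of this complex, I would compute $\RHom S(l, \Hom_R(F,J))$ by applying $\Hom_S(l, -)$ to $\Hom_R(F, J)$; since $J$ consists of $^*$injective $S$-modules and $\Hom_R(F_p, J_q)$ is a product of shifts of $J_q$, it is again $^*$injective over $S$, so $\Hom_R(F,J)$ is a complex of $^*$injectives computing $\RHom R(M,N)$ and $\Hom_S(l, \Hom_R(F,J))$ computes $\Ext^\bullet_S(l, \RHom R(M,N))$. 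The key computation is the isomorphism of bigraded vector spaces
\[
\Hom_S\bigl(l, \Hom_R(F_p, J_q)\bigr) \cong \bigoplus_{j} \Hom_S(l, J_q)(-j)^{\beta_{p,j}(M)},
\]
together with the fact that, because $F$ is minimal and $l$ is annihilated by $\mm$, the horizontal differential induced from $F$ becomes zero after applying $\Hom_S(l,-)$. Therefore the cohomology of the total complex decomposes, and in each internal degree one reads off
\[
\mu^{i,j}_S(\RHom R(M,N)) = \sum_{p+q = i} \ \sum_{j'} \beta_{p,j'}(M)\, \mu^{q, j - j'}_S(N).
\]
Assembling this into generating functions, with the sign convention $y^{-j}$ for the Bass series and $y^{j}$ for the Poincaré series, yields precisely $I^{\RHom R(M,N)}_S(t,y) = P^R_M(t,y)\, I^N_S(t,y)$.

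The main technical point to be careful about is the bookkeeping of the internal degree in the two generating functions: the Bass series uses $y^{-j}$ while the Poincaré series uses $y^{j}$, and the shift $R(-j)$ in $F_p$ contributes a factor $y^{j}$ when one dualizes into $\Hom_S(l, J_q)$, so the signs must line up exactly for the product to come out right — this is the whole reason the paper adopts the $y^{-j}$ convention in the definition of $I^M_R(t,y)$. A secondary point is checking that all the relevant $k$-vector spaces and $l$-vector spaces are finite-dimensional in each bidegree so that the formal power series manipulations are legitimate: this follows from $F_p$ having finite rank in each homological degree (since $M \in \dgrcatp R$ has finitely generated homology, hence a degreewise finite minimal free resolution, using that $R$ is standard graded and Noetherian) and from $N \in \dgrcatn S$ having finite-dimensional Bass numbers $\mu^{q,j}_S(N)$ in each bidegree. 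I do not expect any serious obstacle beyond this indexing care; the argument is the graded transcription of \cite[Proposition 1.5.3]{AF}, and the only novelty is threading the internal grading through the identification $\Hom_R(R(-j), J_q) \cong J_q(j)$ and ensuring the exponent conventions cancel as intended.
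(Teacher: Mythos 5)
Your strategy --- compute $\RHom S(l,\RHom R(M,N))$ from a minimal graded free resolution $F$ of $M$ and a minimal ${}^*$injective resolution $J$ of $N$, and use minimality of $F$ to kill the horizontal differential of $\Hom_S(l,\Hom_R(F,J))$ --- is sound and does prove the proposition; it is the chain-level incarnation of what the paper does more compactly in the derived category. The paper's proof is the chain of adjunctions
\[
\RHom S(l,\RHom R(M,N))\simeq \RHom S(l\dtensor R M,N)\simeq \Hom_k(M\dtensor R k,k)\otimes_k\RHom S(l,N),
\]
followed by the K\"unneth formula. Your vanishing of the horizontal differential is exactly the underived form of the first adjunction, namely $\Hom_S(l,\Hom_R(F,J))\cong\Hom_S(l\otimes_R F,J)$ together with $l\otimes_R d^F=0$; phrasing it this way is also safer than arguing directly with $\phi\mapsto\phi\circ d^F$, since $\phi(\mm F_p)\subseteq\mm J_q$ is not literally zero --- what vanishes is the map after tensoring $l$ over $R$ with $d^F$. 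Both arguments need the same finiteness input (degreewise finite Betti and Bass numbers), which you correctly supply.

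There is, however, a sign error in the internal degree in your two displayed formulas which, taken literally, proves the wrong identity. Since $F_p=\bigoplus_j R(-j)^{\beta_{p,j}(M)}$ and $\Hom_R(R(-j),J_q)\cong J_q(j)$ (a degree-zero map $R(-j)\to J_q$ sends the generator, which sits in degree $j$, to an element of $(J_q)_j$), the correct decomposition is
\[
\Hom_S\bigl(l,\Hom_R(F_p,J_q)\bigr)\cong\bigoplus_j\Hom_S(l,J_q)(j)^{\beta_{p,j}(M)},
\]
with twist $(j)$ rather than $(-j)$, and consequently
\[
\mu^{i,j}_S(\RHom R(M,N))=\sum_{p+q=i}\ \sum_{j'}\beta_{p,j'}(M)\,\mu^{q,\,j+j'}_S(N),
\]
with $j+j'$ rather than $j-j'$; this is the paper's equation \eqref{eq_Bass}. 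Your versions assemble to $P^R_M(t,y^{-1})\,I^N_S(t,y)$ instead of $P^R_M(t,y)\,I^N_S(t,y)$. Your closing remark that the twist contributes a factor $y^{j}$ is the correct bookkeeping and is consistent with $(j)$ and $j+j'$, so the fix is confined to those two displays; the rest of the argument stands.
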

\begin{proof}
We have the following sequence of graded isomorphisms
\begin{align*}
\RHom S(l,\RHom R(M,N)) &\simeq \RHom S(l\dtensor R M, N)\\
                           &\simeq \RHom S((M\dtensor R k)\otimes_k l, N)\\
                           &\simeq \Hom_k(M\dtensor R k, \RHom S(l,N))\\
                           &\simeq \Hom_k(M\dtensor R k, k) \otimes_k \RHom S(l,N).
\end{align*}
Taking the homology and applying K\"unneth's formula, we get
\[
\Ext^*_S(l,\RHom R(M,N)) \cong \Hom_k(\Tor^R_*(M,k),k)\otimes_k \Ext^*_S(l,N).
\]
In particular,
\begin{equation}
\label{eq_Bass}
\mu^{i,j}_S(\RHom R(M,N))= \sum_{u\in \Z}\sum_{v\in \Z}\beta^R_{u,v}(M)\mu^{i-u,j+v}_S(N).
\end{equation}
This gives the desired formula.
\end{proof}
\subsection{Local cohomology of complexes}
\label{sect_derived_lc}
In this section, we recall the construction of the local cohomology functor in the derived category. We follow the exposition in Foxby and Iyengar \cite{FI} and Lipman \cite{Lip}.

Let $M$ be a (not necessarily finitely generated) graded {\em $R$-module}. The torsion functor $\Gamma_{\mm}$ gives the following submodule of $M$:
\[
\Gamma_{\mm}(M)=\{a\in M: \mm^ia=0 ~\textnormal{for some $i>0$}\}.
\]
This extends to a functor $\Gamma_{\mm}: \dcatgr R \to \dcatgr R$, which is again called the torsion functor. This is a left-exact functor and its right derived functor is written as $\dlc_{\mm}(-)$. For a {\em complex} $M\in \dcatgr R$ and each $i\in \Z$, $H^i_{\mm}(M)=H_{-i}(\dlc_{\mm}(M))$ is an $\mm$-torsion module, called the $i$-th local cohomology of $M$ with support in $\mm$. The following result is immediate from \cite[Corollary 3.4.3]{Lip}.
\begin{lem}
\label{lc_base_change}
Let $(R,\mm,k)\to (S,\nn,k)$ be a surjection of standard graded algebras. Let $M\in \dcatgr S$ be a complex. Then for each $i\in \Z$, we have an isomorphism of graded $R$-modules
\[
H^i_{\mm}(M)\cong H^i_{\nn}(M).
\]
\end{lem}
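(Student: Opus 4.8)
The plan is to reduce the statement to a change-of-base fact for local cohomology that is already available in the literature, specifically to Lipman's \cite[Corollary 3.4.3]{Lip}, and to identify the functor $\dlc_{\nn}$ computed over $S$ with the functor $\dlc_{\mm}$ restricted to $S$-complexes. First I would observe that, since $R\to S$ is a surjection of standard graded algebras, we have $\nn = \mm S$ and moreover $\sqrt{\mm S} = \nn$, so the $\mm$-torsion submodule and the $\nn$-torsion submodule of any graded $S$-module coincide: an element of an $S$-module $N$ is annihilated by a power of $\mm$ if and only if it is annihilated by a power of $\nn$, because $\mm^i N = (\mm S)^i N = \nn^i N$. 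Hence the underived torsion functors agree, $\Gamma_{\mm}(N) = \Gamma_{\nn}(N)$, compatibly with the restriction-of-scalars functor $\dcatgr S \to \dcatgr R$.

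Next I would pass to derived functors. The torsion functor $\Gamma_{\mm}$ (resp. $\Gamma_{\nn}$) is right-derived by taking an appropriate resolution — in the setting of Foxby–Iyengar \cite{FI} and Lipman \cite{Lip}, $\dlc$ can be computed by an \emph{\v{C}ech-type complex} or by a $K$-injective resolution — and the key point is that restriction of scalars along a surjection preserves $K$-injectivity up to the comparison needed, or, more cleanly, that the \v{C}ech complex $\check{C}(\bsx; -)$ on a generating set $\bsx$ of $\mm$ computes $\dlc_{\mm}$, while the image $\overline{\bsx}$ of $\bsx$ in $S$ generates $\nn$ and $\check{C}(\overline{\bsx}; -)$ computes $\dlc_{\nn}$; and for an $S$-module these two \v{C}ech complexes are literally the same complex of $R$-modules (tensoring with $\check C(\bsx;R)$ over $R$ is the same as tensoring with $\check C(\overline{\bsx};S)$ over $S$ after restriction, since $\check C(\bsx;R)\otimes_R S \cong \check C(\overline{\bsx};S)$). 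Taking homology of this common complex yields $H^i_{\mm}(M) \cong H^i_{\nn}(M)$ as graded $R$-modules, for each $i\in\Z$. In fact, as the excerpt indicates, this is exactly the content of \cite[Corollary 3.4.3]{Lip}, so the cleanest write-up simply cites that corollary and spells out the translation $\nn = \mm S$, $\sqrt{\mm S}=\nn$.

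The main obstacle — really the only subtlety — is bookkeeping about which derived category and which derived functor one is working in: one must be careful that $\dlc_{\mm}$ applied to an $S$-complex (viewed in $\dcatgr R$) genuinely agrees with $\dlc_{\nn}$ applied in $\dcatgr S$ and then restricted, rather than merely having isomorphic homology by coincidence. This is handled either by the \v{C}ech-complex description above, which is manifestly independent of the ambient ring once one fixes a generating set, or by the general base-change statement in \cite{Lip}. Since the paper already grants us \cite[Corollary 3.4.3]{Lip}, I would keep the proof to a single short paragraph: note $\nn=\mm S$ so that $\Gamma_{\mm}$ and $\Gamma_{\nn}$ coincide on graded $S$-modules and hence on $\dcatgr S$, invoke \cite[Corollary 3.4.3]{Lip} for the derived version, and read off the claimed isomorphism of graded $R$-modules in each cohomological degree.
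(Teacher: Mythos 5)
Your proposal is correct and matches the paper's treatment: the paper gives no argument at all, simply declaring the lemma ``immediate from \cite[Corollary 3.4.3]{Lip}.'' Your additional observations --- that $\nn=\mm S$ forces $\Gamma_{\mm}$ and $\Gamma_{\nn}$ to agree on graded $S$-modules, and that the \v{C}ech complex on a generating set of $\mm$ restricted to an $S$-complex is literally the \v{C}ech complex on the generators of $\nn$ --- are accurate and supply exactly the verification the paper leaves implicit.
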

\subsection{Absolute Castelnuovo-Mumford regularity}
\label{sect_CMreg}
Let $M\in \dgrcat R$ be a complex. We define the {\em absolute Castelnuovo-Mumford regularity} of $M$ by the formula
\[
\reg M=\sup\{i+j: H^i_{\mm}(M)_j\neq 0\}.
\]
By convention, $\reg 0=-\infty$. If $M\in \dgrcatp R$ is a bounded below complex, the {\em relative Castelnuovo-Mumford regularity of $M$ over $R$} is defined by
\[
\reg_R M=\sup\{j-i: \Tor^R_i(k,M)_j\neq 0\}.
\]
We say that $R$ is a Koszul algebra if $\reg_R k=0$. Assume now that $R$ is a standard graded polynomial ring over $k$. Since $R$ is a Koszul algebra and $\reg R=0$, by a result of P. J{\o}rgensen \cite[Corollary 2.8]{Jor2}, we have $\reg_R M=\reg M$ for any $M\in \dgrcatb R$.
\begin{rem}
Note that in \cite{Jor3}, P. J{\o}rgensen used different names and notations for $\reg_R M$ and $\reg M$. For example, the first is called the $\Ext$-regularity of $M$ over $R$ and denoted by $\textnormal{Ext.reg}\, M$. This is not to be confused with the notion $\exreg_R M$ defined below in Section \ref{sect_exreg}. The main result of Section \ref{sect_exreg} will show that the latter invariant is equal to the {\em absolute} regularity if $M$ is bounded. 
\end{rem}
The comparison between absolute and relative regularity is given by the following result.
\begin{thm}[P. J{\o}rgensen, {\cite[Theorems 2.5, 2.6]{Jor3}}]
\label{comparison_reg}
If $M\in \dgrcatb R$ is a non-trivial complex then there are inequalities
\begin{align*}
\reg_R M &\le \reg M + \reg_R k,\\
\reg M &\le \reg_R M+\reg R.
\end{align*}
In particular, if $R$ is a Koszul algebra then $\reg_R M\le \reg M$.
\end{thm}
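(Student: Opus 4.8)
The plan is to derive both inequalities from one estimate on the regularity of a derived tensor product, one factor of which is resolved by free modules with controlled twists. Two elementary observations set this up. \emph{First}, if $X$ is a bounded-below complex all of whose homology modules have finite length, then the canonical morphism $\dlc_\mm X\to X$ is a quasi-isomorphism, so $H^i_\mm(X)\cong H_{-i}(X)$ and hence $\reg X=\sup\{\,j-i:H_i(X)_j\neq 0\,\}$; in particular this value depends only on $X$ as a complex of graded vector spaces. Applied to $X=k\dtensor{R}M$, whose homology $\Tor^R_i(k,M)$ is finite dimensional over $k$, this gives $\reg\bigl(k\dtensor{R}M\bigr)=\reg_R M$. \emph{Second}, write $R=S/I$ with $S$ a standard graded polynomial ring: for a bounded complex of finitely generated graded $S$-modules the $S$-relative regularity equals the absolute one, and by Lemma \ref{lc_base_change} the absolute regularity is unchanged upon passing from $S$ to $R$; hence $\reg M=\reg_S M$, and combined with the first observation (used over $S$) one gets $\reg\bigl(k\dtensor{S}M\bigr)=\reg_S M=\reg M$ and $\reg\bigl(k\dtensor{S}R\bigr)=\reg_S R=\reg R$.

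The key estimate is: \emph{for a bounded-below complex $P$ of finitely generated graded free $R$-modules, with $P_i=\bigoplus_j R(-j)^{b_{ij}}$ and $\rho(P):=\sup\{\,j-i:b_{ij}\neq0\,\}$, and any $Y\in\dgrcatb R$, one has $\reg\bigl(P\dtensor{R}Y\bigr)\le\reg Y+\rho(P)$.} Since $P$ consists of finitely generated frees, $P\dtensor{R}Y=P\otimes_R Y$ and $\dlc_\mm(P\otimes_R Y)\simeq P\otimes_R\dlc_\mm Y$. Filter $P\otimes_R\dlc_\mm Y$ by the homological degree of $P$; this produces a spectral sequence, convergent because $Y$ is bounded and $P$ bounded below, whose $E_1$-term in the column attached to $P_p$ is $\bigoplus_j H^\bullet_\mm(Y)(-j)^{b_{pj}}$, placed so that a summand of $H^s_\mm(Y)(-j)$ contributes to $H^{s-p}_\mm(P\otimes_R Y)$. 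Thus $H^i_\mm(P\otimes_R Y)_\ell\neq 0$ forces $H^{i+p}_\mm(Y)_{\ell-j}\neq 0$ for some $p,j$ with $b_{pj}\neq0$, whence $(i+p)+(\ell-j)\le\reg Y$, \ie $i+\ell\le\reg Y+(j-p)\le\reg Y+\rho(P)$. I expect the fiddliest point of the whole proof to be pinning down the indices of this spectral sequence; the rest is formal.

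Granting the estimate, the first inequality is immediate: take $P$ to be the minimal graded free resolution of $k$ over $R$, so that $\rho(P)=\reg_R k$ and $P\dtensor{R}M\simeq k\dtensor{R}M$; then $\reg_R M=\reg(k\dtensor{R}M)\le\reg M+\reg_R k$, and in particular $\reg_R M\le\reg M$ when $R$ is Koszul. For the second inequality, with $R=S/I$ as above, associativity of the derived tensor product together with $R\dtensor{R}M\simeq M$ give
\[
k\dtensor{S}M\ \simeq\ \bigl(k\dtensor{S}R\bigr)\dtensor{R}M\ \simeq\ \bigl(k\dtensor{S}R\bigr)\dtensor{R}F,
\]
where $F$ is the minimal graded free resolution of $M$ over $R$ (so $\rho(F)=\reg_R M$) and $k\dtensor{S}R\in\dgrcatb R$. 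Applying the estimate with $P=F$ and $Y=k\dtensor{S}R$, and evaluating the two regularities by the observations above,
\[
\reg M=\reg\bigl(k\dtensor{S}M\bigr)\le\reg\bigl(k\dtensor{S}R\bigr)+\rho(F)=\reg R+\reg_R M,
\]
as desired. The only place where being standard graded over a field is genuinely used — via Lemma \ref{lc_base_change} — is the identity $\reg(k\dtensor{S}R)=\reg R$, \ie the coincidence of the $S$-relative and the $R$-absolute regularity of $R$.
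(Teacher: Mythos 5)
Your argument is correct, but note that the paper does not prove this statement at all: it is imported from J{\o}rgensen \cite{Jor3}, with a pointer to R\"omer \cite{Roe} for a simpler treatment of the module case. So what you have produced is a self-contained proof of a cited result, and it holds up. The two pillars both check out: (a) for a bounded-below complex $X$ with finite-length homology the map $\dlc_{\mm}X\to X$ is a quasi-isomorphism (tensor with the \v{C}ech complex; the truncated part is a bounded complex of flat modules of the form $R_f$, $f\in\mm$, which kill torsion homology), giving $\reg(k\dtensor{R}M)=\reg_R M$ even though $k\dtensor{R}M$ may be unbounded above; and (b) the key estimate $\reg(P\dtensor{R}Y)\le\reg Y+\rho(P)$, whose spectral-sequence indices you were right to worry about but which are correct as written: $P_p\otimes H^s_{\mm}(Y)(-j)$ lands in $H^{s-p}_{\mm}(P\otimes_R Y)$, and convergence is unproblematic since $\dlc_{\mm}Y$ is represented by a bounded complex and only finitely many columns meet each total degree. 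Your reduction does rely on the polynomial-ring identity $\reg_S(-)=\reg(-)$ from \cite[Corollary 2.8]{Jor2}, but the paper itself takes this as known in Section \ref{sect_CMreg}, so there is no circularity. Two small economies are available: the second inequality follows from the key estimate directly with $P=F$ and $Y=R$ (since $\reg(F\otimes_R R)=\reg M$), so the passage through $S$ and the identification $\reg(k\dtensor{S}R)=\reg R$ are not needed there; and your key estimate is essentially a two-variable refinement of the filtration arguments the paper does prove later (Lemma \ref{exreg_homology}, Proposition \ref{prop_degreewise}), so your route is very much in the spirit of the paper's later machinery even though the paper chose to cite rather than prove this particular theorem.
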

See the proof of \cite[Theorem 3.1]{Roe} for a simpler exposition in the case $M$ is a module.

\section{Regularity via Ext modules}
\label{sect_exreg}
For convenience of exposition, let us introduce the following notion.
\begin{defn}
Let $M\in \dcatgr R$ be a bounded above complex of graded $R$-modules. We define the invariant $\exreg_R M$ of $M$ as follow
\[
\exreg_R M=\sup\{i+j:\Ext^i_R(k,M)_j\neq 0\}.
\]
By convention, if $M\simeq 0$, we let $\exreg_R M=-\infty$.
\end{defn}
\begin{rem}
(i) The number $\exreg_R M$ is well-defined in the derived category since it can be computed from the minimal ${}^*$injective resolution of $M$; see Proposition \ref{exreg_inj_res}.

(ii) Note that we always have $\exreg_R k = 0$; this comes from the following observation: if $F$ is the minimal graded free resolution of $k$ then $\Ext^i_R(k,k)\cong \Hom_R(F_i,k)$.
\end{rem}
By standard arguments, we infer
\begin{lem}
\label{exreg_seq}
If $0\to M_1\to M_2 \to M_3\to 0$ be a short exact sequence of (not necessarily finitely generated) graded $R$-modules, then we have
\begin{align*}
\exreg_R M_2 &\le \max\{\exreg_R M_1,\exreg_R M_3\},\\
\exreg_R M_1 &\le \max\{\exreg_R M_2,\exreg_R M_3+1\},\\
\exreg_R M_3 &\le \max\{\exreg_R M_2,\exreg_R M_1-1\}.
\end{align*}
\end{lem}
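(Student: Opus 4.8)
The plan is to derive all three inequalities from the long exact sequence of $\Ext$ modules attached to the short exact sequence $0\to M_1\to M_2\to M_3\to 0$. Since $k$ has a minimal graded free resolution and the functors $\Ext^i_R(k,-)$ can be computed degree by degree, the short exact sequence of complexes yields, for each integer $j$, a long exact sequence of $k$-vector spaces
\[
\cdots \to \Ext^{i-1}_R(k,M_3)_j \to \Ext^i_R(k,M_1)_j \to \Ext^i_R(k,M_2)_j \to \Ext^i_R(k,M_3)_j \to \Ext^{i+1}_R(k,M_1)_j \to \cdots.
\]
Everything then reduces to chasing nonvanishing through this sequence and tracking the quantity $i+j$.

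For the first inequality, suppose $\Ext^i_R(k,M_2)_j\neq 0$. By exactness, the nonzero class either maps to a nonzero element of $\Ext^i_R(k,M_3)_j$, in which case $i+j\le \exreg_R M_3\le \max\{\exreg_R M_1,\exreg_R M_3\}$, or it comes from a nonzero element of $\Ext^i_R(k,M_1)_j$, in which case $i+j\le \exreg_R M_1$. This gives $\exreg_R M_2\le\max\{\exreg_R M_1,\exreg_R M_3\}$. For the second inequality, if $\Ext^i_R(k,M_1)_j\neq 0$, then either its image in $\Ext^i_R(k,M_2)_j$ is nonzero, giving $i+j\le \exreg_R M_2$, or the class lifts from a nonzero element of $\Ext^{i-1}_R(k,M_3)_j$, giving $(i-1)+j\le \exreg_R M_3$, i.e.\ $i+j\le \exreg_R M_3+1$. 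The third inequality is the symmetric computation: if $\Ext^i_R(k,M_3)_j\neq 0$, either it is hit from $\Ext^i_R(k,M_2)_j$, so $i+j\le\exreg_R M_2$, or it injects into $\Ext^{i+1}_R(k,M_1)_j$, so $(i+1)+j\le\exreg_R M_1$, i.e.\ $i+j\le\exreg_R M_1-1$.

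There is essentially no obstacle here; the only points requiring a word of care are that the long exact sequence is graded (which is automatic since all maps in the $\Ext$ long exact sequence are degree-preserving) and that when one of the terms is the zero complex the corresponding $\exreg$ equals $-\infty$, so the stated maxima are still valid (each displayed bound holds vacuously or trivially in those degenerate cases). One also notes that boundedness above of $M_2$ (hence of $M_1$ and $M_3$, using that a submodule and quotient of a bounded-above module are bounded above) guarantees the invariants are defined; the inequalities themselves are purely formal consequences of exactness. This is exactly what is meant by ``by standard arguments'' preceding the lemma.
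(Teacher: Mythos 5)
Your proof is correct and is precisely the ``standard argument'' the paper alludes to (the paper omits the proof entirely): the graded long exact sequence of $\Ext_R(k,-)$ together with a case analysis tracking $i+j$, with the shifts $\pm 1$ coming from the connecting maps. The three inequalities and the handling of degenerate cases all check out.
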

For any ${}^*$injective $R$-module $I$, clearly $\exreg_R I=\sup\{j:\Hom_R(k,I)_j\neq 0\}$.
\begin{prop}
\label{exreg_inj_res}
Let $I_{\pnt}$ be the minimal ${}^*$injective resolution of a bounded above complex $M\in \dcat{\Grsf\,R}$. Then
\[
\exreg_R M =\sup_{i\in \Z}\{\exreg_R I_i-i\}.
\]
\end{prop}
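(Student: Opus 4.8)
The plan is to read off $\Ext^i_R(k,M)$ from the minimal ${}^*$injective resolution $I_{\pnt}$ and to use minimality to show that the relevant complex carries zero differentials. First I would observe that $I_{\pnt}$, being a complex of ${}^*$injective modules which is bounded above in the homological grading, is K-injective; hence the quasi-isomorphism $M\simeq I_{\pnt}$ induces $\RHom R(k,M)\simeq \Hom_R(k,I_{\pnt})$. Consequently, in the homological grading used throughout the paper (where $H^i=H_{-i}$), we have $\Ext^i_R(k,M)\cong H_{-i}\bigl(\Hom_R(k,I_{\pnt})\bigr)$ for all $i\in\Z$.

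The key step is that minimality of $I_{\pnt}$ forces every differential of the complex $\Hom_R(k,I_{\pnt})$ to vanish. Indeed, a graded homomorphism $k(-a)\to I_j$ has image a simple graded submodule $kx$ with $x\in (I_j)_a$ and $\mm x=0$; since $\Ker(I_j\to I_{j-1})$ is graded-essential in $I_j$, this simple submodule is contained in $\Ker(I_j\to I_{j-1})$, so $x$ is killed by the differential $I_j\to I_{j-1}$. As $\Hom_R(k,I_j)$ is spanned by such homomorphisms, the induced map $\Hom_R(k,I_j)\to \Hom_R(k,I_{j-1})$ is zero for every $j$, whence the homology of $\Hom_R(k,I_{\pnt})$ in each degree is just the corresponding term. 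Therefore
\[
\Ext^i_R(k,M)\cong \Hom_R(k,I_{-i})\qquad\text{for all }i\in\Z.
\]

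Finally I would unwind the definition of $\exreg_R$. Recalling, as observed immediately before the proposition, that $\exreg_R J=\sup\{j:\Hom_R(k,J)_j\neq 0\}$ for any ${}^*$injective module $J$, and substituting $p=-i$, one obtains
\[
\exreg_R M=\sup\{i+j:\Hom_R(k,I_{-i})_j\neq 0\}=\sup_{p\in\Z}\bigl(\sup\{j:\Hom_R(k,I_p)_j\neq 0\}-p\bigr)=\sup_{p\in\Z}\{\exreg_R I_p-p\},
\]
and the degenerate case $M\simeq 0$ (both sides equal to $-\infty$) is immediate.

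The step that demands the most care is essentially bookkeeping: keeping track of the homological-versus-cohomological indexing conventions and confirming that a homologically bounded above complex of ${}^*$injectives genuinely computes $\RHom R(k,-)$. The only real mathematical ingredient is the socle/essentiality argument in the second paragraph, and I expect it to present no serious obstacle.
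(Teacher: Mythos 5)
Your proof is correct and follows essentially the same route as the paper, which simply cites the isomorphism $\Hom_R(k,I_i)\cong\Ext^{-i}_R(k,M)$ as immediate; you supply the standard justification (minimality forces the differentials of $\Hom_R(k,I_{\pnt})$ to vanish via the graded-essentiality/socle argument) and then do the same index bookkeeping. No gaps.
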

\begin{proof}
This is immediate since $\Hom_R(k,I_i) \cong \Ext^{-i}_R(k,M)$ for all $i\in \Z$.
\end{proof}
\begin{ex}
\label{exreg_polynomial}
Let $R=k[x_1,\ldots,x_n]$ (where $n\ge 1$) be a standard graded polynomial ring. Then $\Ext^i_R(k,R)=0$ for $i\le n-1$ and
\[
\Ext^n_R(k,R)\cong k(n).
\]
Therefore $\exreg_R R=0$.
\end{ex}
The following result is an analogue of \cite[Proposition 6.1]{NgV}.
\begin{lem}
\label{exreg_homology}
For any complex $G\in \dgrcatn R$, there is an inequality
\[
\exreg G \le \sup_{i\in \Z}\{\exreg H_i(G)-i\}.
\]
\end{lem}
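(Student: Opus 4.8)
The plan is to proceed by induction on the \emph{homological amplitude} of $G$, i.e.\ on $\sup G - \inf G$, using the truncation triangles of a bounded-above complex together with the long-exact-sequence estimates for $\exreg_R$ recorded in Lemma~\ref{exreg_seq}. First I would reduce to the case $G\in\dgrcatb R$; indeed, since $G$ is bounded above, the supremum on the right-hand side is over finitely many nonzero $H_i(G)$ with $i\le\sup G$, so only a bounded stretch of homology contributes, and by a soft-truncation argument one may replace $G$ by a quasi-isomorphic bounded complex without changing either side of the inequality (local cohomology and $\Ext^\bullet_R(k,-)$ commute with the relevant soft truncations up to the same bound).

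For the inductive step, write $s=\sup G$ and consider the short exact sequence of complexes $0\to G'\to G\to \shift^{s}H_s(G)\to 0$ (in the derived category, the triangle with $G'$ the soft truncation $\tau_{\le s-1}G$), so that $H_i(G')=H_i(G)$ for $i<s$ and $H_s(G')=0$. By the additivity of $\exreg_R$ in triangles — which follows from Lemma~\ref{exreg_seq} applied to the associated long exact sequence in $\Ext^\bullet_R(k,-)$, exactly as in the module case — one gets
\[
\exreg_R G\le\max\{\exreg_R G',\ \exreg_R(\shift^{s}H_s(G))\}.
\]
Now $\exreg_R(\shift^{s}H_s(G))=\exreg_R H_s(G)-s$ directly from the definition (shifting a module by $\shift^{s}$ shifts the homological degree, hence contributes $-s$ to $i+j$), and by the induction hypothesis applied to $G'$, whose amplitude is strictly smaller, $\exreg_R G'\le\sup_{i<s}\{\exreg_R H_i(G)-i\}$. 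Combining, $\exreg_R G\le\sup_{i\le s}\{\exreg_R H_i(G)-i\}=\sup_{i\in\Z}\{\exreg_R H_i(G)-i\}$, which is the claim. The base case is $G$ a single module (amplitude $0$), where the inequality is an equality by definition.

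The point that needs the most care is the \emph{triangulated additivity} of $\exreg_R$: one must check that the three-term estimates of Lemma~\ref{exreg_seq}, which are stated for a short exact sequence of modules, propagate correctly to the long exact sequence
\[
\cdots\to\Ext^i_R(k,G')\to\Ext^i_R(k,G)\to\Ext^i_R(k,\shift^{s}H_s(G))\to\Ext^{i+1}_R(k,G')\to\cdots
\]
so that a nonzero class in $\Ext^i_R(k,G)_j$ forces a nonzero class of the same internal degree $j$ in either $\Ext^i_R(k,G')$ or $\Ext^i_R(k,\shift^{s}H_s(G))$. This is a routine diagram chase once the long exact sequence is in place, and the connecting maps preserve internal degree, so no genuine obstacle arises; the only subtlety is to make sure all the complexes involved stay bounded above (so that $\exreg_R$ is defined) throughout the induction, which the soft-truncation setup guarantees. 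An alternative, perhaps cleaner, route is to invoke Proposition~\ref{exreg_inj_res}: choose a minimal ${}^*$injective resolution $I_\bullet$ of $G$ and bound $\exreg_R I_i - i$ in terms of the injective resolutions of the homology modules $H_j(G)$ via the standard spectral-sequence (or iterated-mapping-cone) comparison, but the truncation induction is the most transparent and self-contained.
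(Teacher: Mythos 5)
Your d\'evissage is a genuinely different route from the paper's: you peel off the top homology by soft truncation and induct on amplitude using the long exact sequence of $\Ext^{\bullet}_R(k,-)$, whereas the paper works directly with the minimal ${}^*$injective resolution, bounds $\exreg_R I_j$ via the cycle/boundary exact sequences, and feeds the result into Proposition~\ref{exreg_inj_res} (your ``alternative, cleaner route'' at the end is essentially the paper's actual argument). For $G\in\dgrcatb R$ your inductive step is sound: the estimate $\exreg_R G\le\max\{\exreg_R G',\ \exreg_R(\shift^{s}H_s(G))\}$ does follow from the degree-preserving long exact sequence, and $\exreg_R(\shift^{s}H_s(G))=\exreg_R H_s(G)-s$ is correct. (Minor slip: the triangle runs $\shift^{s}H_s(G)\to G\to\tau_{\le s-1}G$, since the truncation is the quotient; this does not affect the bound on the middle term.)

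The genuine gap is in your reduction to the bounded case. A complex $G\in\dgrcatn R$ is only bounded \emph{above}: it may have $H_i(G)\neq0$ for infinitely many $i\to-\infty$, so your claim that ``the supremum on the right-hand side is over finitely many nonzero $H_i(G)$'' is false, and such a $G$ is not quasi-isomorphic to any bounded complex. This matters for the paper: Step~1 of the proof of Proposition~\ref{surjection} applies the lemma to $\RHom{R'}(k,M)$, whose homology $\Ext^i_{R'}(k,M)$ is typically nonzero for arbitrarily large $i$ when $R'$ is not regular (e.g.\ $M=k$), so your induction on amplitude never terminates exactly in the cases where the lemma is needed in full strength. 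The argument can be repaired: for each $m$, the triangle $\tau_{\ge m}G\to G\to\tau_{\le m-1}G$ together with the vanishing $\Ext^{-n}_R(k,\tau_{\le m-1}G)=0$ for $n\ge m$ (its minimal ${}^*$injective resolution lives in homological degrees $\le m-1$) shows that $\Ext^{-n}_R(k,G)$ is a homomorphic image of $\Ext^{-n}_R(k,\tau_{\ge m}G)$ for all $n\ge m$; applying your bounded-case argument to $\tau_{\ge m}G$ and letting $m\to-\infty$ then yields the full statement. As written, however, the reduction rests on a false premise and needs this extra limiting step.
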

The inequality is an equality if the differential of $G$ is zero.
\begin{proof}
Replacing $G$ by $\shift^{-m}G$ then both sides increase by $m$ for every $m\in \Z$. So we can assume that $\sup H(G)=0$.

It is harmless to replace $G$ by its minimal ${}^*$injective resolution $I$, which can be chosen such that $\sup I=\sup H(G)=0$. Denote $B_j=\img(I_{j+1} \to I_j), Z_j=\Ker(I_j\to I_{j-1}), H_j=H_j(G)$. We will show that for all $j\le 0$,
\begin{equation}
\label{exreg_ineq}
\exreg_R I_j \le \max_{j\le i\le 0}\{\exreg_R H_{j+i}-i\}.
\end{equation}
Indeed, there are short exact sequences
\begin{align}
0\to B_i \to Z_i \to H_i \to 0, \label{sequence_B} \\
0\to Z_i \to I_i \to B_{i-1} \to 0 \label{sequence_Z}.
\end{align}
Note that for each $i$, \eqref{sequence_Z} is the beginning of the minimal ${}^*$injective resolution of $Z_i$, therefore from Proposition \ref{exreg_inj_res},
\begin{equation}
\label{exreg_Z}
\exreg Z_i=\max\{\exreg_R I_i, \exreg_R B_{i-1}+1\}.
\end{equation}

Applying Lemma \ref{exreg_seq} for \eqref{sequence_B} and \eqref{sequence_Z}, we obtain
\begin{align*}
\exreg_R Z_j &\le \max\{\exreg_R H_j, \exreg_R B_j\}\\
              &\le \max\{\exreg_R H_j, \exreg_R Z_{j+1}-1\},
\end{align*}
where the second inequality follows from \eqref{exreg_Z}. Starting from $Z_0=H_0$, an easy induction gives for any $j<0$
\begin{equation*}
\exreg_R Z_j \le \max_{0\le i\le -j}\{\exreg_R H_{j+i}-i\}.
\end{equation*}
So \eqref{exreg_ineq} follows since from \eqref{exreg_Z}, $\exreg_R I_j \le \exreg_R Z_j$.

Now from \eqref{exreg_ineq}, for each $j\le 0$,
\begin{align*}
\exreg_R I_j -j \le \max_{j\le i\le 0}\{\exreg_R H_i-i\}.
\end{align*}
Hence taking suprema over all $j\le 0$, we finish the proof of the inequality.

Assume that $G$ has differential $0$, hence $H_i(G)=G_i$. For each $i$, let $L^i_*$ be the minimal ${}^*$injective resolution of $G_i$. Then $\bigoplus \shift^i L^i_*$ is the minimal ${}^*$injective resolution of $G$. Hence
\begin{align*}
\exreg_R G &=\sup_{j\le 0} \{\exreg_R (\bigoplus_{j\le i\le 0} L^i_{j-i})-j\}\\
            &=\sup_{i\le 0} \{ \sup_{j\le i}\{\exreg_R L^i_{j-i}-(j-i)-i\} \}\\
            &=\sup_{i\le 0} \{\exreg_R G_i-i\},
\end{align*}
as desired.
\end{proof}
That $\exreg_R M$ is always a finite invariant in cases of interest is confirmed by the next result. 
\begin{prop}[Ring independence]
\label{surjection}
Let $R'\to R$ be a surjection of standard graded $k$-algebras. Then for each non-trivial $M\in \dgrcatn R$, we have
\[
\exreg_R M = \exreg_{R'} M.
\]
In particular, if $M\in \dgrcatb R$ and $M\not\simeq 0$ then $\exreg_R M$ is a finite number.
\end{prop}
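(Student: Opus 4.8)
The plan is to prove $\exreg_R M = \exreg_{R'} M$ by reducing to the situation of a single change of rings through a regular element or, more directly, by comparing $\Ext$ over $R$ and $R'$ via a standard spectral sequence / base change argument. Concretely, since $R' \to R$ is surjective, pick a minimal graded free resolution $F$ of $M$ over $R$ (if $M$ is a complex, take a semifree resolution); then $F$ represents $M$ in $\dcatgr{R}$, and one computes $\Ext^\bullet_R(k,M) = H^\bullet(\Hom_R(k,F))$. On the other side, one uses the change-of-rings spectral sequence
\[
\Ext^p_R(\Tor^{R'}_q(k,R), M) \Longrightarrow \Ext^{p+q}_{R'}(k,M),
\]
or more simply the identity $\RHom{R'}(k,M) \simeq \RHom{R}(k \dtensor{R'} R, M)$ together with the fact that $k \dtensor{R'} R$ is a complex of $R$-modules whose homology modules $\Tor^{R'}_q(k,R)$ are finite-dimensional $k$-vector spaces sitting in degrees we can control. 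The key numerical input is that $\Tor^{R'}_q(k,R)_j \neq 0$ forces $j \ge q$ with equality exactly when $q=0$ (because $R$ is a quotient of $R'$ by an ideal contained in $\mm'^2$ after we've stripped off the part of $\mm'$ that maps isomorphically — more carefully, $R$ is $R'$-module-generated in degree $0$, so $\Tor^{R'}_0(k,R) = k$ and higher Tor's are concentrated in degrees $\ge$ their homological index... actually the clean statement is $j - q \le 0$ with equality iff $q = 0$, i.e. $\reg_{R'} R = 0$ is false in general, so one must argue more carefully).

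The cleaner route, which I would take, exploits Proposition~\ref{exreg_inj_res}: the invariant $\exreg_R M$ is computed from the minimal ${}^*$injective resolution of $M$ in $\grcat R$, and a ${}^*$injective $R$-module need not be ${}^*$injective over $R'$, so one cannot directly reuse resolutions. Instead I would argue by induction on the number of generators needed to present $R'$ over $R$, or equivalently reduce to the case where $R' \to R$ has kernel generated by a single element, and then further split into two cases: the kernel is generated by a nonzerodivisor in degree $1$ (the generic case, handled by a Rees-type or mapping-cone argument comparing ${}^*$injective resolutions), or we may assume $\dim R' = \dim R$ and the kernel lies in $\mm'^2$. Actually the slickest approach avoids this case analysis entirely: use the adjunction
\[
\RHom{R'}(k, M) \simeq \RHom{R}\bigl(k \dtensor{R'} R,\, M\bigr)
\]
in $\dgrcatn R$, take a minimal ${}^*$injective resolution $I$ of $M$ over $R$, and observe that since $k\dtensor{R'} R$ has homology $\Tor^{R'}_\ast(k,R)$ which is a bounded complex of finite-length graded $k$-vector spaces, applying $\Hom_R(-, I)$ and computing homology via a spectral sequence gives
\[
\exreg_{R'} M = \sup\{\, i + j : \Ext^i_{R'}(k,M)_j \neq 0 \,\} = \sup_q \bigl\{ \exreg_R\bigl(\Tor^{R'}_q(k,R)\bigr) \text{-weighted contribution} \bigr\};
\]
one then checks, using that $\Tor^{R'}_0(k,R)=k$, $\exreg_R k = 0$, that the $q=0$ term already achieves the supremum and the $q>0$ terms do not exceed it.

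The main obstacle I anticipate is precisely controlling the higher terms $\Tor^{R'}_q(k,R)$ for $q \ge 1$: one needs to know these cannot contribute anything larger than $\exreg_R M$ to $\exreg_{R'} M$. This requires knowing that the bigraded degrees appearing in $\Tor^{R'}_q(k,R)$ are bounded in a way compatible with the regularity bookkeeping — essentially that $\Tor^{R'}_q(k,R)$ lives in internal degrees $\le$ something like $q \cdot (\text{generating degree of the kernel})$, and combined with $\Ext^i_R(\Tor^{R'}_q(k,R), M)$ this contributes $i + j$ values no larger than those from $q = 0$. Since $\Tor^{R'}_q(k,R)$ is a finite-dimensional $k$-vector space, $\exreg_R$ of it equals its top internal degree, and a careful tracking through the spectral sequence (or a direct mapping-cone induction) shows these are dominated. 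I would handle this by first proving the statement when $M = k$ (where both sides are $0$ by the Remark after the definition and by $\exreg_{R'} k = 0$), then for general $M$ use Lemma~\ref{exreg_homology} applied to the complex $k \dtensor{R'} R \dtensor{R} (\text{resolution of } M)$, together with the observation that each $\Tor^{R'}_q(k,R) \dtensor{R} M$ has $\exreg_R$ controlled by $\exreg_R M$ plus the top degree of $\Tor^{R'}_q(k,R)$, which is zero for $q=0$ and is compensated by the homological shift $-q$ for $q > 0$ precisely when $\reg_{R'} R = 0$ — but since we do \emph{not} know $R$ is Koszul over $R'$, the final argument must instead invoke that $\exreg_R(\Tor^{R'}_q(k,R)) - q$ is bounded above by $\exreg_R M$ using the graded structure of the $R'$-free resolution of $R$ and the definition of $\exreg$ as a supremum. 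The finiteness statement in the ``In particular'' clause is then immediate: choosing $R'$ to be a polynomial ring surjecting onto $R$, Example~\ref{exreg_polynomial} and standard finiteness of $\Ext$ over a polynomial ring give $\exreg_{R'} M < \infty$ for $M \in \dgrcatb{R}$, hence $\exreg_R M < \infty$.
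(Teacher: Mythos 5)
Your write-up correctly assembles the ingredients for one of the two inequalities but leaves the other with a genuine gap. The change-of-rings spectral sequence $E_2^{p,q}=\Ext^p_R(\Tor^{R'}_q(k,R),M)\Rightarrow \Ext^{p+q}_{R'}(k,M)$ (equivalently the adjunction $\RHom{R'}(k,M)\simeq \RHom{R}(k\dtensor{R'}R,M)$) only exhibits $\Ext^{p+q}_{R'}(k,M)$ as a subquotient of the $E_2$-terms, so it yields the coefficientwise bound $I^M_{R'}(t,y)\preccurlyeq I^M_R(t,y)P^{R'}_R(t,y)$ and hence $\exreg_{R'}M\le \exreg_R M$ --- but it cannot show that ``the $q=0$ term already achieves the supremum,'' since that term may be killed by differentials or hidden by the filtration on later pages. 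For this direction the degree bound you need is the one you first wrote down, namely $\Tor^{R'}_q(k,R)_j\ne 0\Rightarrow j\ge q$ (true by minimality of the $R'$-free resolution of $R$, because $\ker(R'\to R)\subseteq \mm'$); your subsequent ``correction'' to $j-q\le 0$ reverses the inequality and is both false in general and irrelevant --- no Koszul-type hypothesis on $R$ over $R'$ enters anywhere.

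The reverse inequality $\exreg_R M\le \exreg_{R'}M$ requires a separate mechanism that your proposal never pins down. The paper's device is to observe that $\RHom{R'}(k,M)$ is itself a complex of $R$-modules (its homology modules $\Ext^i_{R'}(k,M)$ are $k$-vector spaces, hence $R$-modules through the surjection), to compute its $R$-invariant exactly as $\exreg_R\RHom{R'}(k,M)=\exreg_R M-\indeg k=\exreg_R M$ via the Bass series formula (Lemma \ref{cor_reg_Hom}), and then to bound it above by $\sup_i\{\exreg_R\Ext^i_{R'}(k,M)+i\}=\exreg_{R'}M$ using Lemma \ref{exreg_homology}. Your sketch gestures at Lemma \ref{exreg_homology} but proposes to apply it to $k\dtensor{R'}R\dtensor{R}(\text{resolution of }M)$, which computes Tor rather than Ext and does not control $\exreg$; and the closing claim that ``$\exreg_R(\Tor^{R'}_q(k,R))-q$ is bounded above by $\exreg_R M$'' is not a meaningful comparison, as the left side depends only on $R$ and $R'$. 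The final finiteness assertion (take $R'$ polynomial and use the vanishing of $\Ext^i_{R'}(k,M)$ outside a finite range) is correct and matches the paper.
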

Firstly, we have
\begin{lem}
\label{cor_reg_Hom}
Let $R'\to R$ be a surjection of standard graded $k$-algebras, $N\in \dgrcatn R$ a complex and $M$ a finitely generated $R'$-module. Then we have 
$$
\exreg_R \RHom {R'}(M,N)=\exreg_R N-\indeg M.
$$ 
Here $\indeg M=\inf\{i:M_i\neq 0\}$.
\end{lem}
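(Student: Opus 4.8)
The plan is to reduce the statement to the previously established Bass series formula (Proposition \ref{Bass_series}) together with the Poincar\'e series of a finitely generated module generated in a bounded range of degrees. First I would observe that $\exreg_R$ of a complex $M\in\dgrcatn R$ is read off from the bigraded Bass series $I^M_R(t,y)$: indeed $\mu^{i,j}_R(M)=\dim_k\Ext^i_R(k,M)_j$, and with the convention that $I^M_R$ records $t^i y^{-j}$, we have
\[
\exreg_R M=\sup\{i+j:\mu^{i,j}_R(M)\neq 0\}=\sup\{i-v:\text{the monomial }t^iy^v\text{ appears in }I^M_R(t,y)\}.
\]
So $\exreg_R$ is the supremum of $i-v$ over the support of $I^M_R(t,y)$. (Here I am implicitly using that the support is nonempty exactly when $M\not\simeq 0$, which holds for $M\in\dgrcatn R$ over a $k$-algebra with residue field $k$.)

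Next I would apply Proposition \ref{Bass_series} with the surjection $R'\to R$ (playing the role of $(R,\mm,k)\to(S,\nn,l)$, with $l=k$ since the map is a $k$-algebra surjection), to the pair $M$ (finitely generated over $R'$, hence in $\dgrcatp{R'}$) and $N\in\dgrcatn R$. This yields
\[
I^{\RHom{R'}(M,N)}_R(t,y)=P^{R'}_M(t,y)\,I^N_R(t,y).
\]
Passing to the support, $\exreg_R\RHom{R'}(M,N)$ equals the supremum of $(i_1+i_2)-(v_1+v_2)$ where $t^{i_1}y^{v_1}$ runs over the support of $P^{R'}_M(t,y)$ and $t^{i_2}y^{v_2}$ over that of $I^N_R(t,y)$; since both series have nonnegative coefficients there is no cancellation, so this supremum splits as
\[
\exreg_R\RHom{R'}(M,N)=\Big(\sup\{i_1-v_1\}\Big)+\exreg_R N,
\]
the first term being the supremum over the support of the Poincar\'e series $P^{R'}_M(t,y)$ (one must check this first term is finite, which it is: $M$ is a finitely generated module so $\Tor^{R'}_0(k,M)\neq0$ and the relevant quantity is bounded below; more to the point it equals a finite number as computed next). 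Note for a \emph{module} $M$ the minimal free resolution satisfies $\beta_{0,j}(M)\neq0$ only for $j\ge\indeg M$ and $\beta_{i,j}(M)\neq0$ forces $j>$ the minimal generating degrees of earlier stages, but what we actually need is only the extremal value.

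Finally I would compute $\sup\{i_1-v_1:\beta_{i_1,v_1}(M)\neq0\}$. Since $M$ is a finitely generated graded module, its minimal free resolution $F$ has $F_0=\bigoplus_s R'(-a_s)$ with all $a_s\ge\indeg M$, and minimality forces $\img(F_{i_1}\to F_{i_1-1})\subseteq\mm' F_{i_1-1}$, so any generator of $F_{i_1}$ sits in degree strictly larger than some generator degree of $F_{i_1-1}$; inductively the smallest generating degree of $F_{i_1}$ is at least $i_1+\indeg M$, i.e. $\beta_{i_1,v_1}(M)\neq0\Rightarrow v_1\ge i_1+\indeg M\Rightarrow i_1-v_1\le-\indeg M$, with equality attained at $i_1=0$, $v_1=\indeg M$ (a minimal generator of $M$ in degree $\indeg M$). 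Hence $\sup\{i_1-v_1\}=-\indeg M$, giving $\exreg_R\RHom{R'}(M,N)=\exreg_R N-\indeg M$, as claimed. The main obstacle, and the only genuinely delicate point, is this last monotonicity estimate on the Betti numbers of a module — making precise that minimality of the free resolution forces $v_1\ge i_1+\indeg M$; the rest is bookkeeping with supports of power series with nonnegative coefficients. A clean way to phrase that step is: $F\otimes_{R'}(R'/\mm'^{\,r})$ has, by minimality, zero differential in the relevant degrees, so $\Tor^{R'}_{i_1}(k,M)$ is a subquotient of $F_{i_1}\otimes k$ and its degrees are among the generating degrees of $F_{i_1}$, which in turn are bounded below by $i_1+\indeg M$ by induction on $i_1$ using $\img F_{i_1}\subseteq\mm'F_{i_1-1}$.
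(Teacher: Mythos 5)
Your proposal is correct and follows essentially the same route as the paper: apply the Bass series formula of Proposition \ref{Bass_series} to the pair $(M,N)$ over $R'\to R$, observe that $\exreg_R$ is read off the support of the Bass series (with no cancellation since all coefficients are nonnegative), and compute $\sup\{u-v:\beta^{R'}_{u,v}(M)\neq 0\}=-\indeg M$ from minimality of the free resolution. The paper leaves this last Betti-number estimate implicit, whereas you spell it out; otherwise the arguments coincide.
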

\begin{proof}
By the proof of  \ref{Bass_series}, for all $i,j\in \Z$, we have an equation 
\[
\mu^{i,j}_R(\RHom {R'}(M,N))= \sum_{u\in \Z}\sum_{v\in \Z}\beta^{R'}_{u,v}(M)\mu^{i-u,j+v}_R(N).
\]
Taking suprema from the equality $i+j=(i-u+j+v) +(u-v)$, we obtain $\exreg_R \RHom {R'}(M,N)=\exreg_R N-\indeg M$, as desired.
\end{proof}
\begin{proof}[Proof of Theorem \ref{surjection}]
Let $I$ be the minimal ${}^*$injective resolution of $M$ over $R'$. For each $i\in \Z$, $H_{-i}(\RHom{R'}(k,M)) \cong \Ext^i_{R'}(k,M)$ as $R'$-modules. Since $R'\to R$ is surjective, and $\Ext^i_{R'}(k,M)$ is a $k$-vector space, we also have
\[
H_{-i}(\RHom{R'}(k,M)) \cong \Ext^i_{R'}(k,M)
\]
as $R$-modules, for all $i\in \Z$.

{\em Step 1}. We will show that $\exreg_R M\le \exreg_{R'} M$. Applying Lemma \ref{exreg_homology} for the complex of $R$-modules $\RHom{R'}(k,M)$, we see that
\[
\exreg_R \RHom{R'}(k,M) \le \sup_{i\in \Z}\{\exreg_R \Ext^i_{R'}(k,M)+i\}.
\]
If $r_{R'}^i=\sup\{j:\Ext^i_{R'}(k,M)_j \neq 0\}$ for each $i\in \Z$, then
\[
\exreg_R \Ext^i_{R'}(k,M)=\exreg_R k(-r_{R'}^i)=r_{R'}^i.
\]
Finally, using Lemma \ref{cor_reg_Hom}, one has
\begin{align*}
\exreg_R M &= \exreg_R \RHom{R'}(k,M)\\
            &\le \inf_{i\in \Z}\{r_{R'}^i+i\}\\
            &=\exreg_{R'} M.
\end{align*}

{\em Step 2}. It remains to show that $\exreg_{R'} M \le \exreg_R M$. There is a standard change of rings spectral sequence for $\Ext$:
\[
E^{p,q}_2=\Ext^p_R(\Tor^{R'}_q(k,R),M) \Rightarrow \Ext^{p+q}_{R'}(k,M).
\]
Therefore by usual arguments
\[
I^M_{R'}(t,y) \preccurlyeq I^M_R(t,y)P^{R'}_R(t,y),
\]
and hence
\[
\exreg_{R'} M \le \exreg_R M.
\]
The first part of the theorem is proved. 

For the second part, choose $R'$ to be a polynomial ring over $k$ which surjects onto $R$. If $M\in \dgrcatb R$ then $\exreg_{R'} M$ is finite since $\Ext^i_{R'}(k,M)=0$ unless $-\sup M\le i\le \dim R'-\inf M$.
\end{proof}
We are in the position to show that $\exreg$ is the same as the absolute regularity.
\begin{thm}
\label{localreg_exreg}
Let $R$ be a standard graded $k$-algebra and $M\in \dgrcatb R$. Then $\reg M=\exreg_R M$.
\end{thm}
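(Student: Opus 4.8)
The plan is to reduce to the case where $R$ is a polynomial ring and then exploit the ring-independence of both invariants. For the absolute regularity, Lemma \ref{lc_base_change} shows that $H^i_{\mm}(M)$ is unchanged under a surjection $R' \to R$ of standard graded algebras (viewed as $R'$-modules), so $\reg M$ does not depend on whether we compute it over $R$ or over any polynomial ring $R'$ surjecting onto $R$. For the $\Ext$-regularity, Proposition \ref{surjection} gives precisely the analogous statement: $\exreg_R M = \exreg_{R'} M$. Hence it suffices to prove $\reg M = \exreg_R M$ when $R = k[x_1,\dots,x_n]$ is a polynomial ring.

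First I would treat the case $M$ a single module (concentrated in homological degree $0$). Over a polynomial ring, local duality gives $H^i_{\mm}(M)^\vee \cong \Ext^{n-i}_R(M, R(-n))$, where $(-)^\vee$ is the graded Matlis dual; translating the grading, the condition $H^i_{\mm}(M)_j \neq 0$ becomes $\Ext^{n-i}_R(M, R)_{-j-n} \neq 0$. Then one applies $\Hom_R(k,-)$ to a minimal injective resolution (or, dually, $k \otimes_R -$ to a minimal free resolution of $M$) to relate these $\Ext$-modules back to $\Ext^\bullet_R(k, M)$; this is essentially the content of the three equivalent descriptions of $\reg$ over a polynomial ring and is where the standard spectral-sequence bookkeeping sits. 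In fact, since over a polynomial ring $\reg_R M = \reg M$ (noted in Section \ref{sect_CMreg}), and by the usual adjunction $\Ext^i_R(k, M)_j$ is computed from the minimal injective resolution while $\Tor$-regularity is read off the minimal free resolution, the equality $\sup\{i+j : \Ext^i_R(k,M)_j \neq 0\} = \sup\{j - i : \Tor^R_i(k, M)_j \neq 0\}$ for $R$ a polynomial ring is the key identity; one direction comes from Lemma \ref{exreg_homology} applied to $\RHom{R}(k, M)$ and the other from a graded local-duality / minimal-resolution comparison.

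To pass from modules to bounded complexes, I would induct on the homological length of $M$ using the standard truncation triangles. For $M \in \dgrcatb R$, let $s = \sup M$ and consider the triangle relating the soft truncation $\tau_{\ge s} M \simeq \shift^s H_s(M)$, $M$, and $\tau_{< s} M$, which is again in $\dgrcatb R$ with strictly shorter homological span. Both $\reg(-)$ and $\exreg_R(-)$ behave subadditively along triangles — for $\exreg_R$ this is Lemma \ref{exreg_seq} (the long exact $\Ext$ sequence), and for $\reg$ it is the long exact local-cohomology sequence — so the inequality $\exreg_R M \le \max_i\{\exreg_R H_i(M) - i\}$ and the matching bound for $\reg M$ follow, and then the module case plus these bounds pin the two invariants together. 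One must also use that $\exreg_R \shift H_i = \exreg_R H_i + 1$ and likewise for $\reg$, so the shifts match.

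The main obstacle I anticipate is the module case over a polynomial ring: making the graded local-duality isomorphism interact cleanly with \emph{minimal} injective resolutions so that the supremum $\sup\{i+j : \Ext^i_R(k,M)_j \neq 0\}$ is genuinely identified with $\reg M$, rather than merely bounded on one side. Lemma \ref{exreg_homology} gives $\exreg_R M \le \reg_R M = \reg M$ cheaply; the reverse inequality is the delicate part, and it is natural to get it from the fact that the top nonzero $\Ext^i_R(k, M)$ in a given internal degree detects a nonzero Bass number, which local duality converts into a nonvanishing local cohomology class in the expected degree. Once the polynomial-ring module case is secured, the truncation induction and the two ring-independence statements assemble the general result with no further difficulty.
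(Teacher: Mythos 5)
Your reduction to a polynomial ring is exactly the paper's: Lemma \ref{lc_base_change} handles $\reg$ and Proposition \ref{surjection} handles $\exreg$. But the heart of the theorem --- the identity $\sup\{i+j:\Ext^i_R(k,M)_j\neq 0\}=\sup\{j-i:\Tor^R_i(k,M)_j\neq 0\}$ over $R=k[x_1,\dots,x_n]$ --- is precisely the step you flag as ``the main obstacle'' and then do not carry out. The route you sketch (graded local duality applied to $M$, i.e.\ $H^i_{\mm}(M)^\vee\cong\Ext^{n-i}_R(M,R(-n))$, followed by spectral-sequence bookkeeping to get back to $\Ext^\bullet_R(k,M)$) is the hard way around, and your fallback claim that Lemma \ref{exreg_homology} ``cheaply'' gives $\exreg_R M\le\reg_R M$ does not work as stated: applying that lemma to $\RHom{R}(k,M)$ only returns a tautology about $\exreg_R M$, not a comparison with Betti numbers. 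The paper's argument dualizes the \emph{Koszul resolution of $k$} rather than $M$: self-duality $K\simeq\shift^n\Hom_R(K,R(-n))$ gives, for any complex $M$, a termwise isomorphism $\Tor^R_i(k,M)\cong\Ext^{n-i}_R(k,M)(-n)$, from which the equality of the two suprema is immediate arithmetic ($j-i=(j-n)+(n-i)$); combined with J{\o}rgensen's $\reg M=\reg_R M$ for bounded complexes over a Koszul algebra with $\reg R=0$, this finishes the proof. So the missing idea is to apply duality to the resolution of the residue field, not to the module.

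A secondary issue: because the Koszul self-duality isomorphism already holds for arbitrary bounded complexes, no devissage is needed; your proposed induction via truncation triangles is superfluous, and as sketched it would not close anyway, since the long exact sequences for $\Ext^\bullet_R(k,-)$ and $H^\bullet_{\mm}(-)$ along a triangle give only one-sided bounds in each position (cancellation between the outer terms can make these strict), so ``the module case plus these bounds'' does not by itself pin down an equality for the middle term.
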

\begin{proof}
Let $R'$ be a standard graded polynomial ring over $k$ which surjects onto $R$. Then we know that $\reg M=\reg_{R'}(M)$ by Lemma \ref{lc_base_change} and $\exreg_R M=\exreg_{R'} M$ by Proposition \ref{surjection}. Therefore it is enough to assume that $R$ is a polynomial ring $k[x_1,\ldots,x_n]$. The conclusion follows from the isomorphisms $\Tor^R_i(k,M)\cong \Ext^{n-i}_R(k,M)(-n)$ for all $i\in \Z$. 

The last isomorphism can be proved as follow: let $K$ be the Koszul complex on a minimal set of generators of the maximal ideal of $R$. Since $K$ is a free complex, we have isomorphisms
\begin{align*}
k\dtensor{R} M \simeq K\otimes_R M & \simeq \shift^n\Hom_R(K,R(-n))\otimes_R M\\
                                      & \simeq \shift^n\Hom_R(K,R(-n)\otimes_R M)\\
                                      & \simeq \shift^n\Hom_R(K,M(-n)).
\end{align*}
The second isomorphism is due to the self-duality of $K$. This concludes the proof.
\end{proof}
\section{Regularity via Koszul homology}
\label{sect_kreg}
Let $(R,\mm,k)$ be a standard graded algebra and let $M\in \dgrcatp R$ be a bounded below complex. Let $\bsx=x_1,\ldots,x_n$ be a sequence of degree $1$ homogeneous elements of $R$. Denote by $K[\bsx;R]$ the Koszul complex on the sequence $\bsx$. Let $K[\bsx;M]=K[\bsx;R]\dtensor{R} M$ be the Koszul complex on the sequence $\bsx$ with coefficients in $M$. Then $H_i(\bsx,M)=H_i(K[\bsx;M])$ is a graded $R$-module for each $i\in \Z$. We denote
\[
\kreg^{\bsx}_R M = \sup\{j-i:H_i({\bsx},M)_j \neq 0\}.
\]
Assume that furthermore, $\bsx$ minimally generates the maximal ideal $\mm$. Then as $K[\bsx;M]$ is independent of the choice of $\bsx$ up to isomorphisms of complexes, we can introduce the following notion (where ``k" will stand for ``Koszul").
\begin{defn}
We denote by $\kreg_R M$ the invariant of $M$ defined by $\kreg^{\bsx}_R M$, where $\bsx$ is a minimal system of generators  of $\mm$ consisting of elements with degree $1$.
\end{defn}
We recall the following easy and well-known result.
\begin{lem}
\label{lem_kreg}
Let $(R,\mm,k)$ be a standard graded algebra and $M\in \dgrcatb R$. Then there is an equality
\[
\kreg_R M=\reg M.
\]
\end{lem}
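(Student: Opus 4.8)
The plan is to reduce to the polynomial ring case exactly as in the proof of Theorem \ref{localreg_exreg}, and then exploit the self-duality of the Koszul complex to convert Koszul homology into $\Ext$. First I would pick a standard graded polynomial ring $R' = k[x_1,\dots,x_n]$ surjecting onto $R$, say with kernel generated by forms, and let $\bsx = x_1,\dots,x_n$ be the images in $R$ of the variables — a minimal homogeneous degree-$1$ generating set of $\mm$. The key observation is that the Koszul complex $K[\bsx;R]$ over $R$ is obtained from the Koszul complex $K[\xb;R']$ over $R'$ by base change: $K[\bsx;R] \simeq K[\xb;R'] \otimes_{R'} R$. Consequently, for any $M \in \dgrcatb R$ (viewed as a complex over $R'$), one has $K[\bsx;M] = K[\bsx;R] \dtensor{R} M \simeq K[\xb;R'] \dtensor{R'} M$, so the Koszul homology $H_i(\bsx,M)$ computed over $R$ agrees with $H_i(\xb,M)$ computed over $R'$, as graded $k$-vector spaces. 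Hence $\kreg_R M = \kreg_{R'} M$, and it suffices to prove the equality $\kreg_{R'} M = \reg M$ when the base is the polynomial ring $R'$.

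So assume $R = k[x_1,\dots,x_n]$ and $\bsx = x_1,\dots,x_n$. By the Auslander–Buchsbaum-type isomorphism already established at the end of the proof of Theorem \ref{localreg_exreg}, namely the self-duality of the Koszul complex $K = K[\bsx;R]$ giving
\[
K[\bsx;M] = K \otimes_R M \simeq \shift^n \Hom_R(K, M(-n)),
\]
one obtains for each $i \in \Z$ a graded isomorphism $H_i(\bsx,M) \cong H_{i-n}\bigl(\Hom_R(K, M(-n))\bigr)$. Now $\Hom_R(K,R)$ is again (up to shift) the Koszul complex, and since $k$ admits $K$ as its minimal graded free resolution over the polynomial ring, the cohomology of $\Hom_R(K,M)$ computes $\Ext^{\bullet}_R(k,M)$: explicitly $H_{-j}(\Hom_R(K,M)) \cong \Ext^j_R(k,M)$. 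Tracking the shifts, this yields $H_i(\bsx,M)_j \cong \Ext^{n-i}_R(k,M)_{j-n}$ for all $i,j$, equivalently $\Ext^{n-i}_R(k,M)_{j} \cong H_i(\bsx,M)_{j+n}$.

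From this isomorphism the numerics match up: $H_i(\bsx,M)_j \neq 0$ iff $\Ext^{n-i}_R(k,M)_{j-n} \neq 0$, and then $j - i = (n-i) + (j-n)$ is exactly the ``$\Ext$-degree sum'' $p + q$ with $p = n-i$ and $q = j-n$. Taking suprema over all such $i,j$ gives
\[
\kreg_R M = \sup\{j - i : H_i(\bsx,M)_j \neq 0\} = \sup\{p + q : \Ext^p_R(k,M)_q \neq 0\} = \exreg_R M,
\]
and by Theorem \ref{localreg_exreg} the right-hand side equals $\reg M$. Combining with the reduction of the first paragraph, $\kreg_R M = \kreg_{R'} M = \reg M$ for arbitrary $R$. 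The only point requiring a little care — and the main (mild) obstacle — is the base-change compatibility of the Koszul complex in the derived category together with the bookkeeping of the degree shift $(-n)$ in the self-duality isomorphism; once those are set up correctly, everything is a matter of reindexing. One should also note that boundedness of $M$ guarantees all the relevant $\Ext$ modules vanish outside a finite range, so the suprema are finite and the manipulations are unambiguous.
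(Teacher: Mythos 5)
Your argument is correct and follows essentially the same route as the paper: both reduce to a polynomial ring $R'$ surjecting onto $R$ via the base-change isomorphism $K[\bsx;M]\simeq K[\bsy;R']\otimes_{R'}M$ and then identify the Koszul homology over $R'$ with a derived functor of $k$. The only (cosmetic) difference is that the paper identifies $H_i(\bsy,M)$ with $\Tor^{R'}_i(k,M)$ and uses $\reg_{R'}M=\reg M$ for polynomial rings, whereas you pass through the Koszul self-duality to $\Ext^{n-i}_{R'}(k,M)(-n)$ and invoke Theorem \ref{localreg_exreg} --- an equivalent path, since that theorem's proof rests on exactly this duality.
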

\begin{proof}
Let $\bsx=x_1,\ldots,x_n$ be a minimal system of generators of $\mm$, where $\deg x_i=1$. Let $R'=k[y_1,\ldots,y_n]$ be a polynomial ring and $R'\to R$ be the canonical projection. Denote $\bsy=y_1,\ldots,y_n$, in $\dgrcat{R'}$, we have the following isomorphisms 
\[
K[\bsy;M]=K[\bsy;R']\otimes_{R'} M \simeq K[\bsy;R']\otimes_{R'} R \otimes_R M \simeq K[\bsx;M].
\]
Therefore $\kreg_R M=\kreg_{R'} M=\reg M$. The last equality holds because of the isomorphism $\Tor^{R'}_i(k,M)\cong H_i(\bsy,M)$ for all $i\in \Z$.
\end{proof}
The main result of this section is the next theorem. It establishes the Koszul homology approach to the absolute regularity, namely equality \eqref{Koszul_formula} in the introduction. The proof is similar to that of \cite[Theorem 7.2.3]{AIM}. Note that one could also have deduced the theorem from Corollary \ref{cor_Koszul}, but we decide to keep the next proof to provide another perspective to this theorem.
\begin{thm}
\label{thm_finite}
Let $\bsy=y_1,\ldots,y_n$ be elements of degree $1$, $M\in \dgrcatb R$ such that $H_i(\bsy,M)$ has finite length for each $i$. For any linear form $y_{n+1}$, denote $\bsy'=y_1,y_2,\ldots,y_{n+1}$, then we have
\[
\kreg^{\bsy}_R M=\kreg^{\bsy'}_R M.
\]
In particular, $\reg M=\kreg^{\bsy}_R M$.
\end{thm}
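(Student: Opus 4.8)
The plan is to prove the stable statement $\kreg^{\bsy}_R M = \kreg^{\bsy'}_R M$ by relating the Koszul complexes $K[\bsy;M]$ and $K[\bsy';M]$ through the standard mapping cone. First I would recall that $K[\bsy';M] \simeq K[\bsy;M] \otimes_R K[y_{n+1};R]$, so there is a triangle
\[
K[\bsy;M] \xrightarrow{\ y_{n+1}\ } K[\bsy;M] \longrightarrow K[\bsy';M] \longrightarrow \shift K[\bsy;M]
\]
in $\dcatgr R$; concretely, $K[\bsy';M]$ is the mapping cone of multiplication by $y_{n+1}$ on $K[\bsy;M]$. Passing to homology, this yields for each $i$ a long exact sequence
\[
\cdots \to H_i(\bsy,M)(-1) \xrightarrow{y_{n+1}} H_i(\bsy,M) \to H_i(\bsy',M) \to H_{i-1}(\bsy,M)(-1) \xrightarrow{y_{n+1}} H_{i-1}(\bsy,M) \to \cdots
\]
Each $H_i(\bsy,M)$ has finite length, hence so does each $H_i(\bsy',M)$, so both $\kreg$-type invariants are finite (or $-\infty$). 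The inequality $\kreg^{\bsy'}_R M \le \kreg^{\bsy}_R M$ is the easy direction: from the long exact sequence, $H_i(\bsy',M)$ is an extension of a submodule of $H_{i-1}(\bsy,M)(-1)$ by a quotient of $H_i(\bsy,M)$, so any nonzero graded piece of $H_i(\bsy',M)$ in degree $j$ forces either $H_i(\bsy,M)_j \ne 0$ (giving $j-i \le \kreg^{\bsy}_R M$) or $H_{i-1}(\bsy,M)_{j-1}\ne 0$ (giving $j-1-(i-1) = j-i \le \kreg^{\bsy}_R M$). Either way $\kreg^{\bsy'}_R M \le \kreg^{\bsy}_R M$.

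The reverse inequality $\kreg^{\bsy}_R M \le \kreg^{\bsy'}_R M$ is the heart of the argument, and this is where I expect the main obstacle to lie. Set $r = \kreg^{\bsy}_R M$ and pick $i$ with $H_i(\bsy,M)_{i+r}\ne 0$ and $i$ as large as possible with this property; equivalently, among all pairs $(i,j)$ with $j-i = r$ and $H_i(\bsy,M)_j\ne 0$, choose the one with largest $i$. By maximality of $i$, we have $H_{i+1}(\bsy,M)_{j+1} = 0$ (note $(j+1)-(i+1) = r$), so in the long exact sequence the map $H_{i+1}(\bsy,M)(-1) \to H_{i+1}(\bsy,M)$ in degree $j+1$ lands in a module whose degree-$(j+1)$ part is zero, and more usefully the connecting piece $H_{i+1}(\bsy',M) \to H_i(\bsy,M)(-1)$ surjects onto $\ker(y_{n+1}\colon H_i(\bsy,M)(-1)_{j+1} \to H_i(\bsy,M)_{j+1})$. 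Here I would use that $H_i(\bsy,M)(-1)_{j+1} = H_i(\bsy,M)_j \ne 0$, and that multiplication by the linear form $y_{n+1}$ maps $H_i(\bsy,M)_j$ into $H_i(\bsy,M)_{j+1}$; by the maximality/extremality of the pair $(i,j)$ one argues that this map has a nonzero kernel, so that $H_{i+1}(\bsy',M)_{j+1}\ne 0$, which gives $\kreg^{\bsy'}_R M \ge (j+1)-(i+1) = r$. The subtle point is justifying that the multiplication map $y_{n+1}\colon H_i(\bsy,M)_j \to H_i(\bsy,M)_{j+1}$ is not injective: this is exactly where finiteness of length is used, together with choosing the extremal index correctly — one should take $r$ realized at the top graded degree among modules of top homological degree, so that everything above is forced to vanish and a standard "a finite-length graded module is killed in high degree, hence some Nakayama-type socle element survives" argument applies. (Alternatively, one can phrase the whole reverse inequality via the subadditivity property of $\kreg$ under mapping cones combined with the description of $K[\bsy;M]$ as a direct summand, up to a shift, inside $K[\bsy';M]\otimes(\text{Koszul on }y_{n+1})$, but the long-exact-sequence bookkeeping seems cleanest.)

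Once the equality $\kreg^{\bsy}_R M = \kreg^{\bsy'}_R M$ is established, the final assertion $\reg M = \kreg^{\bsy}_R M$ follows by iteration: starting from $\bsy$, successively adjoin linear forms until the sequence includes a minimal system of generators $\bsx$ of $\mm$ (we may assume the $y_i$ span $R_1$ together with finitely many extra linear forms, and each adjunction preserves $\kreg$ and preserves the finite-length hypothesis by the long exact sequence above). After finitely many steps we reach a sequence $\bsz \supseteq \bsx$, and applying the theorem in the other direction (removing the surplus forms one at a time, each removal being an instance of the already-proved equality read backwards) gives $\kreg^{\bsy}_R M = \kreg^{\bsz}_R M = \kreg^{\bsx}_R M = \kreg_R M$. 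By Lemma \ref{lem_kreg}, $\kreg_R M = \reg M$, which completes the proof. The only care needed in this last step is to ensure that at every intermediate stage the Koszul homology still has finite length, which is immediate from the long exact sequence since taking a cone over multiplication by a linear form preserves finite length.
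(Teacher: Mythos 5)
Your proof is correct, and its skeleton is the same as the paper's: the exact triangle $K[\bsy;M](-1)\xrightarrow{\;y_{n+1}\;}K[\bsy;M]\to K[\bsy';M]\to$, the resulting long exact sequence of Koszul homology, and the final passage through the concatenation $\bsx\sqcup\bsy$ with a minimal generating set $\bsx$ of $\mm$. Where you genuinely diverge is at the reverse inequality $\kreg^{\bsy}_R M\le \kreg^{\bsy'}_R M$, which you correctly identify as the heart of the matter: the paper picks $t$ with $y_{n+1}^tH_i(\bsy,M)=0$ and invokes the quantitative length inequality $\ell(H_i(K)_j)\le\sum_{r=0}^{t-1}\ell(H_{i+1}(K')_{j+r+1})$ from \cite[Lemma 4.9]{NgV}, whereas you run a direct extremal argument on the long exact sequence. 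Your argument is in fact simpler than you make it sound: once $r=\kreg^{\bsy}_RM$ is attained at a pair $(i,j)$ with $j-i=r$ (finite length plus boundedness of $M$ guarantee the supremum is a maximum), the target $H_i(\bsy,M)_{j+1}$ vanishes automatically because $(j+1)-i>r$, so multiplication by $y_{n+1}$ out of $H_i(\bsy,M)_j\neq 0$ is the zero map and exactness forces $H_{i+1}(\bsy',M)_{j+1}\neq 0$; no maximality of $i$ and no Nakayama-type socle argument is needed. This gives a self-contained proof of the key step, which is what your route buys over the paper's appeal to an external lemma. One small correction to your last paragraph: when stripping the surplus forms from $\bsz$ back down to $\bsx$, the finite-length hypothesis at each intermediate stage is not ``immediate from the long exact sequence'' (the cone construction propagates finite length from a sequence to a longer one, not the other way around); it holds because every intermediate sequence contains $\bsx$, so its Koszul homology is a finitely generated module annihilated by $\mm$.
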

\begin{proof}
Denote by $\ell(N)$ the number $\dim_k N$ for an $R$-module of finite length $N$. Denote $z=y_{n+1}$, $K=K[\bsy;M]$ and $K'=K[\bsy';M]$. We have an exact triangle in $\dgrcat R$
\[
K(-1) \xrightarrow{\cdot z} K \to K' \to.
\]
By the hypothesis, for each $i$, $H_i(K)$ has finite length. Moreover, as $M$ is bounded, $H_i(K)=0$ for all but finitely many $i$. So there exists a number $t\ge 1$ such that $z^tH_i(K)=0$ for all $i$. Using \cite[Lemma 4.9]{NgV}. we get
\begin{align*}
&\ell(H_i(K')_j) \le \ell(H_i(K)_j)+\ell(H_{i-1}(K)_{j-1}),\\
&\ell(H_i(K)_j) \le \sum_{r=0}^{t-1}\ell(H_{i+1}(K')_{j+r+1}).
\end{align*}
From the first inequality, we conclude that $\kreg^{\bsy'}_R(M)\le \kreg^{\bsy}_R(M)$, while from the second one, we obtain $\kreg^{\bsy}_R(M)\le \kreg^{\bsy'}_R(M)$. So $\kreg^{\bsy'}_R(M)= \kreg^{\bsy}_R(M)$, as desired.

For the second part, let $\bsx$ be a minimal system of generators of $\mm$, then using the first part
\[
\kreg_R M=\kreg_R^{\bsx} M=\kreg_R ^{\bsx \sqcup \bsy}M=\kreg_R^{\bsy} M.
\]
Together with Lemma \ref{lem_kreg}, the proof is therefore completed.
\end{proof}
\section{Auslander-Buchsbaum formula for regularity}
\label{Auslander_Buchsbaum}
For a complex $N\in \dgrcatp R$, the projective dimension of $N$ is defined as 
\[
\projdim_R N=\inf\{r:\textnormal{there exists a free resolution $F$ of $N$ such that $F_i=0$ for $i>r$}\}.
\] 
It is always the case that $\projdim_R N=\sup(k\dtensor{R}N)$; see, e.g., \cite[Theorem 5.2.3]{CF}. We have the following formula of Auslander-Buchsbaum type for absolute Castelnuovo-Mumford regularity. This is an analogue of the depth formula \cite[Theorem 5.2.12]{CF}, \cite[Theorem 4.1]{Iy} and the proof uses the same idea.
\begin{thm}[Auslander-Buchsbaum formula]
\label{AB_exreg} 
Let $R$ be a standard graded $k$-algebra, $M, N$ complexes in $\dgrcatb R$. If $N$ has finite projective dimension then
\begin{align*}
\reg (M\dtensor RN) &=\reg M + \reg_R N\\
                  &=\reg M + \reg N -\reg R.
\end{align*}
In particular, $\reg N=\reg R +\reg_R N.$
\end{thm}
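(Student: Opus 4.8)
I would prove the Auslander–Buchsbaum formula for regularity by leveraging the Ext-description of regularity from Theorem~\ref{localreg_exreg} together with the Bass-series factorization of Proposition~\ref{Bass_series}. The key observation is that $\reg N = \exreg_R N$ and $\reg(M\dtensor R N)=\exreg_R(M\dtensor R N)$ once everything is bounded, so it suffices to track the bigraded Bass series of $M\dtensor R N$ over $R$. First I would reduce to a polynomial ring: choose a standard graded polynomial ring $R'$ surjecting onto $R$, and use Lemma~\ref{lc_base_change} and Proposition~\ref{surjection} to replace all occurrences of $\reg$ and $\exreg_R$ by the corresponding invariants over $R'$; this is legitimate because $M, N, M\dtensor R N$ all have bounded, finitely generated homology. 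Over $R'$ one has $\reg = \reg_{R'}$ for bounded complexes, so the problem becomes additivity of $\reg_{R'}$ under derived tensor product, which is standard — but I will instead route through Ext to keep the argument uniform and to extract the ``$\reg_R N$'' form directly.

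**Main computation.** The heart of the matter is to compute $I^{M\dtensor R N}_R(t,y)$. Since $N$ has finite projective dimension over $R$, let $F$ be its minimal graded free resolution over $R$, a bounded complex of free modules, so that $M\dtensor R N \simeq M\otimes_R F$. Now $\Ext^i_R(k, M\otimes_R F) \cong H^i$ of $\RHom R(k,M)\otimes_R F$ (using that $F$ is free and bounded), and one can identify this with $\Tor$ against the ``numerical data'' of $F$, i.e. the $\beta_{p,q}^R(N)$. Concretely I expect the clean statement
\[
\mu^{i,j}_R(M\dtensor R N) = \sum_{p,q}\beta^R_{p,q}(N)\,\mu^{i-p,\,j-q}_R(M),
\]
which is the exact analogue of equation~\eqref{eq_Bass} in Proposition~\ref{Bass_series} with $\Tor^R_*(M,k)$ replaced by the free-module data of $N$; indeed one can get it by applying Proposition~\ref{Bass_series} with $S=R$ to $\RHom R(N^\vee, M)$ for a suitable dual, or more directly by the self-duality of the Koszul complex exactly as in the proof of Theorem~\ref{localreg_exreg}. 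Taking suprema over $i+j = (i-p+j-q)+(p+q)$ yields
\[
\exreg_R(M\dtensor R N) = \exreg_R M + \sup\{p+q:\beta^R_{p,q}(N)\neq 0\} = \reg M + \reg_R N,
\]
where the last equality uses $\sup\{q-p:\beta^R_{p,q}(N)\neq 0\}=\reg_R N$ — wait, here one needs $p+q$, not $q-p$; this sign bookkeeping is precisely why I would run the argument through the polynomial ring $R'$, where $\beta^{R'}_{p,q}(N)=0$ forces $q\ge p$ appropriately and the two conventions align via the shift in Theorem~\ref{localreg_exreg}.

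**The obstacle and the finish.** The delicate point — and the step I expect to cost the most care — is exactly this matching of the $y$-degree conventions: the Poincaré series carries $y^j$ while the Bass series carries $y^{-j}$, and $\reg_R N$ is $\sup\{j-i\}$ of Betti data whereas $\exreg$ is $\sup\{i+j\}$ of Bass data. Over a polynomial ring $R'$ of dimension $d$ this is reconciled by the isomorphism $\Tor^{R'}_i(k,-)\cong \Ext^{d-i}_{R'}(k,-)(-d)$ from the proof of Theorem~\ref{localreg_exreg}, which converts one supremum into the other with a uniform shift by $d=\dim R'$; since $\reg R'=0$ and $\reg_{R'}$ already equals absolute regularity there, no correction term appears. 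For the passage back to $R$, the term $\reg_R N$ is ring-sensitive, but $\reg_R N=\reg_{R'}N$ need NOT hold — instead one uses $\reg_{R'}N = \reg N - \reg_{R'} k \cdot(\text{stuff})$; cleanly, one applies the special case $\reg N=\reg R+\reg_R N$ (the ``In particular'' clause, obtained by taking $M=R$ in the already-proved identity over all standard graded $R$) to rewrite $\reg M+\reg_R N = \reg M + \reg N - \reg R$, giving the second displayed equality. Thus the logical order is: (1) reduce to $R'=k[x_1,\dots,x_n]$; (2) establish the Bass-series product formula for $M\dtensor R N$ via the free resolution of $N$ and Koszul self-duality; (3) take suprema to get $\reg(M\dtensor R N)=\reg M+\reg_R N$ over $R'$, hence over $R$ by ring-independence of the absolute invariants; (4) specialize $M=R$ to get $\reg N=\reg R+\reg_R N$ and substitute to obtain the $\reg M+\reg N-\reg R$ form.
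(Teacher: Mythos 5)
Your overall strategy is the paper's: identify $\reg$ with $\exreg$ via Theorem \ref{localreg_exreg}, use tensor evaluation with the bounded free resolution $F$ of $N$ to get $\RHom{R}(k,M\dtensor{R}N)\simeq \RHom{R}(k,M)\otimes_k(k\dtensor{R}N)$, and read off the answer from the resulting convolution of Bass and Betti data. But your key displayed formula carries a sign error that you notice and then never actually repair. The correct K\"unneth bookkeeping is $s=i+p$, $u=j-q$ (the factor $\Tor^R_p(k,N)$ sits in homological degree $+p$ while $\Ext^s$ sits in degree $-s$, so $-i=-s+p$), i.e.
\[
\mu^{i,j}_R(M\dtensor{R}N)=\sum_{p,q}\beta^R_{p,q}(N)\,\mu^{i+p,\,j-q}_R(M),
\]
not $\mu^{i-p,\,j-q}_R(M)$. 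With the right index one has $i+j=(s+u)+(q-p)$, all summands are nonnegative, and taking suprema gives $\exreg_R(M\dtensor{R}N)=\exreg_R M+\sup\{q-p:\beta^R_{p,q}(N)\neq 0\}=\reg M+\reg_R N$ directly over $R$. No passage to a polynomial ring is needed --- and none is available.

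That last point is the genuine gap: the reduction to $R'=k[x_1,\dots,x_n]$ that you invoke to ``reconcile the conventions'' does not make sense for this statement. Both the derived tensor product and the relative regularity are ring-sensitive: $M\dtensor{R'}N$ is a different complex from $M\dtensor{R}N$, and $\reg_{R'}N=\reg N\neq \reg_R N$ in general; moreover over $R'$ every module has finite projective dimension, so the theorem over $R'$ degenerates to $\reg(M\dtensor{R'}N)=\reg M+\reg N$ and says nothing about $M\dtensor{R}N$. Ring-independence (Lemma \ref{lc_base_change}, Proposition \ref{surjection}) applies only to the absolute invariants of a \emph{fixed} complex, not to the functor $-\dtensor{R}-$ or to $\reg_R$. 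Once you drop the detour and fix the index, your argument collapses onto the paper's proof of Theorem \ref{AB_exreg}; the ``in particular'' clause and the second displayed equality then follow by setting $M=R$ exactly as you describe.
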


\begin{proof}
By Theorem \ref{localreg_exreg}, we only need to establish the corresponding equalities with $\exreg$ in place of regularity.
Since $\projdim_R N<\infty$, by tensor evaluation \cite[Theorem 4.4.5]{CF},
\begin{align*}
\RHom R(k,M\dtensor RN) &\simeq \RHom R(k,M)\dtensor R N\\
                               &\simeq (\RHom R(k,M)\dtensor k k)\dtensor R N\\
                               &\simeq \RHom R(k,M)\dtensor k(k\dtensor R N)\\
                               &\simeq \RHom R(k,M)\otimes_k (k\dtensor R N).
\end{align*}
Taking homology and using the K\"unneth's formula, we have for all $i,j\in \Z$,
\[
\mu^{i,j}(M\dtensor RN)=\sum_s\sum_u \mu^{s,u}(M)H_{-i+s}(k\dtensor R N)_{j-u}.
\]
From the equality $i+j=(s+u) + (j-u -(-i+s))$, we get by computing suprema
\[
\exreg_R M\dtensor RN = \exreg_R M + \sup\{j-i: H_i(k\dtensor RN)_j\neq 0\}=\exreg_R M+\reg_R N.
\]
The second equation holds because of the definition of $\reg_R N$. This gives the first equality. As a corollary, choosing $M=R$, we obtain
\[
\exreg_R N = \exreg_R R +\reg_R N.
\]
Therefore the second equality follows.
\end{proof}
Consequently, we obtain the following result, which covers \cite[Theorem 4.2]{Roe} and \cite[Theorem 5.3]{Cha} for algebras over a field.
\begin{cor}
\label{AB_reg}
Let $R$ be a standard graded $k$-algebra. If $M \in \dgrcatb R$ satisfies $\projdim_R M<\infty$, then
\[
-\reg_R M + \reg M =\reg R.
\]
\end{cor}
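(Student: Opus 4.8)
The plan is to derive Corollary \ref{AB_reg} directly from Theorem \ref{AB_exreg} by choosing an auxiliary complex that isolates the desired equation. Concretely, I would apply the Auslander-Buchsbaum formula with the roles reversed: put the finite-projective-dimension complex in the first slot and the ring in the second. Since $M\in\dgrcatb R$ has $\projdim_R M<\infty$, and $R\in\dgrcatb R$ trivially with $\projdim_R R=0<\infty$, Theorem \ref{AB_exreg} applies (in either order) to the pair $M$ and $R$. The key observation is that $M\dtensor{R}R\simeq M$, so the left-hand side of the Auslander-Buchsbaum formula collapses to $\reg M$.

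First I would write out Theorem \ref{AB_exreg} for the pair consisting of $M$ (arbitrary in $\dgrcatb R$, playing the role of ``$M$'' in that theorem) and $R$ (playing the role of ``$N$''), noting that $\projdim_R R = 0$ is finite: this yields
\[
\reg M = \reg(M\dtensor{R}R) = \reg M + \reg_R R.
\]
This is not quite what I want on its own, but it is cleaner to instead invoke the ``in particular'' clause of Theorem \ref{AB_exreg}, which states $\reg N = \reg R + \reg_R N$ for any finite-projective-dimension complex $N$. Applying this with $N = M$ (legitimate since $\projdim_R M<\infty$ by hypothesis) gives
\[
\reg M = \reg R + \reg_R M,
\]
and rearranging yields exactly $-\reg_R M + \reg M = \reg R$. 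So the proof is essentially a one-line specialization of the last assertion of Theorem \ref{AB_exreg}.

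The only point requiring a moment's care is that the ``in particular'' statement of Theorem \ref{AB_exreg} is phrased as $\reg N = \reg R + \reg_R N$ under the hypothesis $\projdim_R N < \infty$ — which is precisely the hypothesis we are given on $M$ — so there is genuinely nothing to prove beyond a substitution of letters. I do not anticipate any real obstacle here; the entire content has already been established in Theorem \ref{AB_exreg}, whose proof in turn rested on Theorem \ref{localreg_exreg} (identifying $\reg$ with $\exreg$) and tensor evaluation. If one wanted a self-contained argument not citing the ``in particular'' clause, one would instead take $M = R$ in the first equality of Theorem \ref{AB_exreg} applied to the pair $(R, M)$, using $R\dtensor{R}M\simeq M$ and $\reg R = \exreg_R R$, and reach the same conclusion. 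Either route is immediate, so the proof will be short.

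\begin{proof}
Since $\projdim_R M<\infty$, the last assertion of Theorem \ref{AB_exreg} applies with $N=M$, giving $\reg M=\reg R+\reg_R M$. Rearranging, we obtain $-\reg_R M+\reg M=\reg R$.
\end{proof}
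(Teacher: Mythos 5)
Your proof is correct and is exactly the route the paper takes: Corollary \ref{AB_reg} is stated as an immediate consequence of the ``in particular'' clause $\reg N=\reg R+\reg_R N$ of Theorem \ref{AB_exreg}, applied with $N=M$. There is nothing to add.
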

\begin{ex}
\label{AB_Gorenstein}
Let $R$ be a Gorenstein $k$-algebra. Then for every finitely generated graded $R$-module $M$ with $\projdim_R M<\infty$, we have
\[
-\reg_R M + \reg M = \reg R=\dim R+a(R),
\]
where $a(R)$ is the $a$-invariant of $R$. 

This holds since $\Ext^i_R(k,R)=\begin{cases}
0, &\textnormal{if $i<\dim R$},\\
k(-a(R)),&\textnormal{if $i=\dim R$}.
\end{cases}$
\end{ex}
Note that even if $R$ is Koszul, it may still happen that $\reg M - \reg_R M \neq \reg R$ for some $R$-module $M$. For example, take $R=k[x,y]/(x^2,y^2), M=k$, then $\reg R=2$ while $-\reg_R k+\reg k=0$. So the formula in Corollary \ref{AB_reg} is not true for all modules of finite relative regularity.

\begin{cor}
Let $R\to S$ be a surjection of standard graded $k$-algebras such that $\projdim_R S<\infty$. Let $M\in \dgrcatb{S}$ be such that $\projdim_S M<\infty$. Then $\reg_R S+\reg_S M=\reg_R M$.
\end{cor}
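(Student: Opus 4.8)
The plan is to reduce the claimed identity $\reg_R S + \reg_S M = \reg_R M$ to the Auslander-Buchsbaum formula for regularity (Theorem \ref{AB_exreg}) applied twice, once over $R$ and once over $S$. First I would record that since $\projdim_R S < \infty$ and $\projdim_S M < \infty$, we also have $\projdim_R M < \infty$: indeed a bounded free resolution of $M$ over $S$ composed with a bounded free resolution of $S$ over $R$ (applied termwise, and totalized) yields a bounded complex of free $R$-modules quasi-isomorphic to $M$. Equivalently, $\sup(k \dtensor R M) \le \sup(k \dtensor R S) + \sup(l \dtensor S M) < \infty$, where $l = S/\nn$; since $k \cong l$ as fields the change-of-rings spectral sequence $\Tor^S_p(\Tor^R_q(k,S), M) \Rightarrow \Tor^R_{p+q}(k,M)$ gives this bound directly.

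Next I would apply Corollary \ref{AB_reg} (the module/complex version of the Auslander-Buchsbaum formula) to each of the three finite-projective-dimension situations at hand. Over $R$ with the complex $M$: $-\reg_R M + \reg M = \reg R$. Over $R$ with the complex $S$: $-\reg_R S + \reg S = \reg R$. Over $S$ with the complex $M$: $-\reg_S M + \reg M = \reg S$. Here I am using that the \emph{absolute} regularity $\reg M$ of $M$ does not depend on whether we view $M$ as a complex over $R$ or over $S$ — this is precisely Lemma \ref{lc_base_change}, since local cohomology with support in the maximal ideal is insensitive to a surjective base change — and likewise $\reg S$ is unambiguous. Subtracting the second equation from the first gives $-\reg_R M + \reg_R S = \reg M - \reg S$, and substituting $\reg M - \reg S = \reg_S M$ from the third equation yields $\reg_R M = \reg_R S + \reg_S M$, as desired.

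The only genuine point requiring care is the verification that $\projdim_R M < \infty$, so that Corollary \ref{AB_reg} is legitimately applicable with base $R$ and complex $M$; everything else is formal bookkeeping with the three instances of the Auslander-Buchsbaum formula and the base-change invariance of absolute regularity. I do not anticipate a real obstacle here: the finiteness of $\projdim_R M$ follows either from the spectral sequence argument above or, more concretely, by observing that if $F$ is a bounded free resolution of $M$ over $S$ and $G$ is a bounded free resolution of $S$ over $R$, then the total complex of $F \otimes_S (G \otimes_R S)$—rather, one takes a bounded free $R$-resolution of each $F_i$ (each being free over $S$, hence of finite projective dimension over $R$) and totalizes—produces a bounded free $R$-resolution of $M$. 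I would present the spectral sequence version since it is the cleanest.
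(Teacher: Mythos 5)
Your argument is correct and is essentially the paper's own (second) proof: after verifying $\projdim_R M<\infty$, you combine three instances of the Auslander--Buchsbaum formula (Corollary \ref{AB_reg}) over $R$ and over $S$, using that the absolute regularity of $M$ and of $S$ is unchanged under the surjection $R\to S$ (Lemma \ref{lc_base_change} / Proposition \ref{surjection}). The paper records the same bookkeeping in terms of $\exreg$, together with an alternative proof that applies Theorem \ref{AB_exreg} directly to $(k\dtensor{R}S)\dtensor{S}M$, but your route is the same in substance.
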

\begin{proof}
Since $\projdim_R S<\infty$, the complex $k\dtensor{R}S$ has bounded homology. Denote $N=k\dtensor{R}S$. Using Theorem \ref{AB_exreg} we have 
\begin{align*}
\reg_R M &=\kreg_R (k\dtensor{R}M)=\kreg_S (N \dtensor{S}M)\\
         &=\kreg_S N+\reg_S M=\reg_R S+\reg_S M.
\end{align*}
Another proof: since $\projdim_R S<\infty$ and $\projdim_S M<\infty$, also $\projdim_R M<\infty$. Hence
\begin{align*}
\reg_R S+\reg_S M&=\exreg_R S-\exreg_R R+\exreg_S M-\exreg_S S\\
                 &=\exreg_S M-\exreg_R R = \exreg_R M-\exreg_R R\\
                 &=\reg_R M.
\end{align*}
\end{proof}

\section{Regularity bounds for homology of bounded complexes}
\label{sect_bounding_Tor}
Now we come to the main application of this paper, which concerns regularity bounds for complexes in terms of the regularity of their homology. The main result in this section is Theorem \ref{thm_cmd1}, which provides a sufficient condition for a complex $L\in \dgrcatb R$ to have $\reg L=\max_{i\in \Z}\{\reg H_i(L)-i\}$. We begin by a simple bound for the regularity of a complex, which we learned from Srikanth Iyengar. Observe that the result is an analog of the inequality \cite[Proposition 2.7(i)]{Iy} for depth. Through out, $(R,\mm,k)$ is a standard graded algebra. 
\begin{prop}
\label{prop_degreewise}
Let $G\in \dgrcatb R$ be a complex. Then we always have
\[
\reg G\le \sup_{i\in \Z}\{\reg G_i-i\}.
\]
\end{prop}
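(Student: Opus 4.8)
The plan is to induct on the number of nonzero components of $G$, using the standard soft truncation to split $G$ into a module and a shorter complex. Since $G\in\dgrcatb R$ has bounded homology, after shifting we may represent $G$ by a complex that is bounded both above and below (one can always take a truncation of a free resolution placed between $\inf G$ and $\sup G$, so that $G$ has only finitely many nonzero components $G_s,\dots,G_t$). The base case is a single nonzero component, say $G=\shift^i G_i$, where the inequality reads $\reg G=\reg G_i-i$, which is immediate from the definition of absolute regularity together with the fact that $H^p_{\mm}(\shift^i G_i)=H^{p-i}_{\mm}(G_i)$.

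For the inductive step, let $t=\sup\{i:G_i\neq 0\}$ and consider the short exact sequence of complexes
\[
0\to \shift^{t}\Ker(G_t\to G_{t-1})\to G'\to G''\to 0,
\]
where $G''$ is the brutal truncation $G\ge_{<t}$ (the subcomplex with components $G_{t-1},G_{t-2},\dots$) — more precisely I would use the stupid truncation that writes $G$ as an extension (in the derived category, an exact triangle) of $\shift^{t-1}\sigma_{<t}G$ by the two-term complex $\shift^{t-1}(G_t\xrightarrow{\partial}G_{t-1})$, or equivalently the short exact sequence of complexes $0\to \shift^{t}G_t\to G\to \sigma_{<t}G\to 0$ if one is willing to accept a shift. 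In any of these formulations one gets an exact triangle relating $G$, a shift of a single module $G_t$, and a complex $G'$ with strictly fewer nonzero components whose components are among $G_{t-1},G_{t-2},\dots$. From the associated long exact sequence in local cohomology, regularity is subadditive along triangles: $\reg G\le \max\{\reg(\shift^{t}G_t),\ \reg G'\}$ (this uses only that a nonzero local cohomology of $G$ forces a nonzero local cohomology of one of the two outer terms in the appropriate degree, i.e. $\reg$ of the middle term of a triangle is bounded by the max of the $\reg$'s of the two ends — a routine consequence of the long exact sequence). Then $\reg(\shift^{t}G_t)=\reg G_t-t$ by the base-case computation, and by the inductive hypothesis $\reg G'\le \sup_{i<t}\{\reg G_i-i\}$. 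Combining, $\reg G\le \sup_{i\in\Z}\{\reg G_i-i\}$.

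The only real point requiring care is the subadditivity of $\reg$ along an exact triangle, namely the statement that if $A\to B\to C\to$ is an exact triangle in $\dgrcatb R$ then $\reg B\le\max\{\reg A,\reg C\}$. This follows from the long exact sequence $\cdots\to H^p_{\mm}(A)_j\to H^p_{\mm}(B)_j\to H^p_{\mm}(C)_j\to\cdots$: if $H^p_{\mm}(B)_j\neq 0$ with $p+j$ large, then either $H^p_{\mm}(A)_j\neq 0$ or $H^p_{\mm}(C)_j\neq 0$ (the image of $H^p_{\mm}(B)_j$ in $H^p_{\mm}(C)_j$ is nonzero, or else it comes from $H^p_{\mm}(A)_j$). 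One must also handle the bookkeeping of shifts correctly, which is the mildly fiddly bit: each stage of the truncation introduces a shift, and these have to be tracked so that the component $G_i$ contributes exactly $\reg G_i-i$ and not some shifted variant. I expect this index-bookkeeping — rather than any conceptual difficulty — to be the main obstacle, and it is entirely mechanical.

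Alternatively, and perhaps more cleanly, one can avoid the induction: choose a minimal graded free resolution step only implicitly and instead observe that $H^p_{\mm}(G)$ is computed by a spectral sequence $E_1^{p,q}=H^q_{\mm}(G_{-p})\Rightarrow H^{p+q}_{\mm}(G)$ (the hyper-cohomology spectral sequence of the double complex obtained by applying a $\Gamma_{\mm}$-acyclic — e.g. Čech — resolution in each degree). Nonvanishing of $H^n_{\mm}(G)_j$ then forces nonvanishing of some $E_1^{p,q}$-term with $-p+q=n$ surviving to $E_\infty$, hence $H^q_{\mm}(G_{-p})_j\neq 0$ with $q+p\cdot(-1)+\dots$; unwinding, $n+j=(q+j)+(-p)\le \reg G_{-p}+(-p)\le\sup_i\{\reg G_i-i\}$. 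This is the argument I would actually write, since it is a one-line spectral-sequence estimate once the convergence (guaranteed by boundedness of $G$ and finiteness of the $H^q_{\mm}$) is in place; the boundedness hypothesis $G\in\dgrcatb R$ is exactly what makes the spectral sequence converge and the suprema finite.
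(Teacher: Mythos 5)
Your argument is correct in substance and reaches the stated inequality, but it takes a different technical route from the paper's. Both proofs rest on the same decomposition of $G$ into its components via brutal truncation; the paper, however, first reduces to a polynomial ring (where $\reg$ agrees with the Tor-regularity) and runs the induction on the numbers $t_i(G)=\sup\{j:\Tor^R_i(k,G)_j\neq 0\}$ using the exact sequences $0\to\shift^jG_j\to G_{\ge j}\to G_{\ge j+1}\to 0$, whereas you stay with local cohomology and use subadditivity of $\reg$ along exact triangles, or, more cleanly, the hypercohomology spectral sequence $H^q_\mm(G_p)\Rightarrow H^{q-p}_\mm(G)$. Your spectral-sequence version is arguably the shortest correct writeup; the paper's Tor version has the advantage of setting up notation reused in the later truncation arguments. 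Two slips should be repaired. First, with homological indexing the top component is a quotient, not a subcomplex: the correct short exact sequence is $0\to G_{\le t-1}\to G\to\shift^tG_t\to 0$ (the paper instead peels off the bottom component, which genuinely is a subcomplex). This is harmless for your purposes, since $\reg$ of the middle term of a triangle is bounded by the maximum of the two ends in either orientation. Second, you may not replace $G$ by a truncated free resolution: the right-hand side $\sup_i\{\reg G_i-i\}$ is an invariant of the actual complex, not of its class in $\dcat{\Grsf\,R}$, and a quasi-isomorphism changes the $G_i$ and hence the bound you are trying to prove. One should simply read the statement, as the paper implicitly does, for a bounded complex of finitely generated modules; then your induction on the number of nonzero components runs on $G$ itself with no replacement. (A small sign typo: with the paper's convention $(\shift M)_j=M_{j-1}$ one has $H^p_\mm(\shift^iM)\cong H^{p+i}_\mm(M)$, not $H^{p-i}_\mm(M)$; your stated conclusion $\reg(\shift^iG_i)=\reg G_i-i$ is nonetheless the correct one.)
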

\begin{proof}
We can assume that $R$ is a polynomial ring and $\inf G=0$. Denote $t_i(G)=\sup\{j:\Tor^R_i(k,G)_j\neq 0\}$. From the discussion in Section \ref{sect_CMreg}, we obtain $\reg G = \sup_{i\ge 0} \{(t_i(G)-i\}$.

It is enough to show that $t_i(G) \le \max\{t_i(G_0),t_{i-1}(G_1),\ldots,t_0(G_i)\}$ for each $i\ge 0$. Indeed, if this is the case, then 
\begin{align*}
t_i(G)-i &\le \max_{0\le j\le i}\{t_{i-j}(G_j)-(i-j)-j\}\\
                    &\le \max_{0\le j\le i}\{\reg G_j -j\},
\end{align*}
hence $\reg G \le \sup_{i\in \Z}\{\reg G_i -i\}.$

Let us denote $X^i=G_{\ge i}$. Note that $t_i(X^j)=0$ for $j\ge i+1$, since the minimal graded free resolution $F$ of $X^j$ can be chosen such that $F_t=0$ for $t\le j-1$.

Now for each $j=0,\ldots,i,$ we have an exact sequence in the abelian category $\grcat R$
\[
0\to \shift^{j}G_j \to X^j \to X^{j+1} \to 0.
\]
Therefore 
\[
t_i(X^j)\le \max\{t_i(\shift^jG_j),t_i(X^{j+1})\}=\max\{t_{i-j}(G_j),t_i(X^{j+1})\}.
\]
Summing up,
\begin{align*}
t_i(G) & \le \max \{t_i(G_0), t_i(X^1)\} \\
         & \le \max \{t_i(G_0), t_{i-1}(G_1),t_i(X^2)\} \\
         & \le \cdots \\
         & \le \max \{t_i(G_0),t_{i-1}(G_1),\ldots,t_0(G_i)\},         
\end{align*}
as claimed.
\end{proof}
The main result of this section as well as this paper is
\begin{thm}
\label{thm_cmd1}
Let $L \in \dgrcatb R$ be such that $\depth H_i(L)\ge \dim H_{i+1}(L)-1$ for all $i<\sup L$. Then we have 
$\reg L= \sup_{i\in \Z}\{\reg H_i(L)-i\}.$
\end{thm}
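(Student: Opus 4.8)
The plan is to prove the theorem by induction on $\sup L - \inf L$, reducing the length of the complex one step at a time, using the truncation exact triangles and the Koszul homology description of regularity from Section~\ref{sect_kreg}. First I would dispose of the trivial case where $L$ has homology concentrated in a single degree: then $L \simeq \shift^s H_s(L)$ for $s = \sup L = \inf L$, and the equality is immediate. For the inductive step, set $s = \sup L$ and consider the truncation triangle
\[
\tau_{<s} L \to L \to \shift^s H_s(L) \to
\]
in $\dgrcatb R$, where $\tau_{<s} L = L_{< s}$ has homology $H_i(\tau_{<s}L) = H_i(L)$ for $i < s$ and vanishing in degrees $\ge s$. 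The complex $\tau_{<s}L$ still satisfies the hypothesis (the condition $\depth H_i \ge \dim H_{i+1} - 1$ only involves indices $i < \sup L$, and the relevant inequalities are inherited, with the top one $i = s-1$ now vacuous), so by induction $\reg \tau_{<s}L = \sup_{i}\{\reg H_i(L) - i : i < s\}$.

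The heart of the argument is then to control $\reg L$ via the long exact sequence attached to the triangle. The inequality $\reg L \le \max\{\reg \tau_{<s}L,\ \reg(\shift^s H_s(L))\} = \max\{\reg \tau_{<s}L,\ \reg H_s(L) - s\}$ follows from the standard behaviour of $\reg$ (equivalently $\exreg_R$, via Theorem~\ref{localreg_exreg}, using Lemma~\ref{exreg_seq} on the long exact Ext sequence), so the right-hand side dominates. For the reverse inequality I would pick $\bsy = y_1, \dots, y_n$ a minimal generating set of $\mm$ of degree-$1$ forms and work with the Koszul complexes $K[\bsy; -]$, which by Lemma~\ref{lem_kreg} compute regularity. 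Tensoring the triangle with $K[\bsy; R]$ gives an exact triangle of Koszul complexes, hence a long exact sequence
\[
\cdots \to H_{i}(\bsy, \tau_{<s}L) \to H_i(\bsy, L) \to H_{i-s}(\bsy, H_s(L)) \xrightarrow{\partial} H_{i-1}(\bsy, \tau_{<s}L) \to \cdots
\]
where $H_{i-s}(\bsy, H_s(L))$ has its nonzero part in degrees contributing $\reg H_s(L) - s$ to the $\kreg$ count. To see that the top contribution $\reg H_s(L) - s$ (and likewise each $\reg H_i(L) - i$) actually survives into $H_\bullet(\bsy, L)$ and is not killed by the connecting map, I would use the depth/dimension hypothesis: the assumption $\depth H_{s-1}(L) \ge \dim H_s(L) - 1$ forces, after passing to the Koszul homology, that the map $\partial$ cannot be surjective onto the socle-type top-degree piece — concretely, $H_{i-1}(\bsy, \tau_{<s}L)$ in the critical degree is governed by $\reg H_{s-1}(L)$, which the hypothesis keeps from being large enough to absorb the class coming from $H_s(L)$. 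This is where I expect the real work: converting "$\depth \ge \dim - 1$" into a precise non-vanishing statement in a single Koszul homology degree, presumably by a local-duality or Grothendieck-vanishing argument identifying the extreme graded component of $H^i_\mm$ (or of $H_i(\bsy, -)$) in terms of $\reg$ and relating the depth bound to the vanishing range of the connecting homomorphism.

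The main obstacle, then, is the surjectivity/injectivity control of the connecting maps in the Koszul (or local cohomology) long exact sequence: one needs that the hypothesis $\depth H_i(L) \ge \dim H_{i+1}(L) - 1$ precisely guarantees that, in the bidegree where $\reg H_{i+1}(L) - (i+1)$ is attained, the comparison map does not interfere. I would handle this by translating everything into local cohomology via $\reg = \exreg_R$ (Theorem~\ref{localreg_exreg}) and analysing $H^j_\mm$ of the three terms: the depth bound says $H^j_\mm(H_i(L)) = 0$ for $j < \dim H_{i+1}(L) - 1$, while the top local cohomology $H^{\dim H_{i+1}(L)}_\mm(H_{i+1}(L))$ is exactly what carries the extremal degree; a dimension count then shows the relevant piece of the long exact sequence splits off the desired summand, yielding $\reg L \ge \reg H_i(L) - i$ for every $i$ and completing the induction. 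If a cleaner route is available, one may instead invoke Theorem~\ref{thm_finite} to reduce to Koszul homology on a well-chosen (possibly enlarged) system of linear forms making all the relevant homology finite length, which simplifies the bookkeeping of graded degrees.
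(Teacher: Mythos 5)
Your outline correctly isolates the easy half (the inequality $\reg L\le\sup_i\{\reg H_i(L)-i\}$, which is Lemma \ref{exreg_homology}) and correctly locates the difficulty in the reverse inequality, but the step you defer --- ``converting $\depth\ge\dim-1$ into a precise non-vanishing statement'' controlling the connecting map --- is the entire content of the theorem, and the mechanism you sketch for it does not work as stated. If $\bsy$ is a full minimal generating set of $\mm$, the vanishing range of $H_j(\bsy,H_{s-1}(L))$ is governed by $\embdim R-\depth H_{s-1}(L)$, which bears no useful relation to $\dim H_s(L)$, so the hypothesis gives you no leverage in the critical Koszul homological degree; enlarging the sequence of linear forms, as you suggest at the end, only moves you further in the wrong direction. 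The paper's key idea, absent from your proposal, is to take $\bsy$ to be a system of parameters for $H_{\sup L}(L)$ of length exactly $n=\dim H_{\sup L}(L)$, chosen \emph{saturated} with respect to all the terms, cycles, boundaries and homologies of a suitable model of $L$ (Lemmas \ref{lem_saturated} and \ref{lem_cmd1}). Saturation gives $n-\grade(\bsy,H_i)\le\max\{0,\,n-\depth H_i\}$, so the chain of hypotheses $\depth H_i\ge\dim H_{\sup L}+i-\sup L$ forces the offending partial Koszul regularities $\kreg_{j}H_{\sup L-j+1}$ to be $-\infty$; the lower bound $\reg L\ge\reg H_{\sup L}(L)-\sup L$ then falls out of a bookkeeping with $\kreg_\ell$ along the sequences $0\to Z_i\to L_i\to B_{i-1}\to 0$ and $0\to B_{i-1}\to Z_{i-1}\to H_{i-1}\to 0$, with no analysis of connecting maps or appeal to local duality.

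Separately, your top-down truncation induction has a quantitative leak. From the triangle $\tau_{<s}L\to L\to\shift^sH_s(L)\to$ the rotated inequality only yields $\reg\tau_{<s}L\le\max\{\reg L,\ \reg H_s(L)-s+1\}$, so even after establishing $\reg L\ge\reg H_s(L)-s$ you cannot conclude $\reg\tau_{<s}L\le\reg L$: each peeling step can lose $1$, and these losses accumulate. The paper avoids this by a reverse induction that tracks three quantities separately ($\reg B_i-i\le\reg L+1$, $\reg(L_i/B_i)-i\le\reg L$, $\reg H_i-i\le\reg L$) and, crucially, re-applies Lemma \ref{lem_cmd1} to the modified truncation $L'\colon 0\to L_{i-1}/B_{i-1}\to L_{i-2}\to\cdots$ combined with the termwise bound $\reg L'\le\sup_j\{\reg L'_j-j\}$ of Proposition \ref{prop_degreewise}, rather than bounding $\reg L'$ by $\reg L$ through a triangle. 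Without these two ingredients your induction does not close.
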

The condition ``$\depth H_i(L)\ge \dim H_{i+1}(L)-1$ for all $i<\sup L$" of Theorem \ref{thm_cmd1} cannot be weakened, see Corollary \ref{cor_filter_regular} and Remark \ref{rem_linear}. The proof of Theorem \ref{thm_cmd1} is not straightforward. The idea is essentially using the Koszul homology approach \ref{thm_finite} and working with the ``partial Koszul regularity". The main ingredient in the proof of Theorem \ref{thm_cmd1} is the following special case, which we will treat first.
\begin{thm}
\label{thm_dim1}
Let $G\in \dgrcatb R$ be a complex such that $\dim H_i(G)\le 1$ for all $i>\inf G$. Then
$
\reg G=\sup_{i\in \Z} \{\reg H_i(G)-i\}.
$
\end{thm}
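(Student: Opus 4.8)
The strategy is to reduce to the Koszul homology characterization of regularity (Theorem \ref{thm_finite}) and then exploit the fact that the homology modules of $G$ other than $H_{\inf G}(G)$ have dimension at most one, so that a single generic linear form annihilates them up to finite length. Concretely, I would normalize $\inf G=0$ and write $s=\sup G$. The inequality $\reg G\le \sup_i\{\reg H_i(G)-i\}$ is the easy direction: one can run an induction on $\sup G-\inf G$ using the truncation triangle $H_s(G)[s]\to G\to G_{\le s-1}\to$ together with the standard behavior of $\reg$ (i.e.\ $\exreg$, via Theorem \ref{localreg_exreg}) on exact triangles, exactly as $\exreg$ behaves on short exact sequences in Lemma \ref{exreg_seq}; alternatively, combine Proposition \ref{prop_degreewise} with the observation that one may replace $G$ by a complex whose terms realize the homology. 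So the content is the reverse inequality $\reg G\ge \sup_i\{\reg H_i(G)-i\}$.

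For the hard direction, I would first dispose of the term $i=0$: the inclusion of the subcomplex coming from a free resolution of $H_0(G)$, or more simply the truncation triangle $G_{\ge 1}\to G\to H_0(G)\to$ (note $\inf G=0$ forces $H_0(G)=G/G_{\ge 1}$ up to quasi-isomorphism after choosing a suitable representative), shows $\reg H_0(G)-0$ is dominated by $\reg G$ once we know the claim for the truncated complex $G_{\ge 1}$, whose homology in every nonzero spot has dimension $\le 1$. Thus it suffices to prove $\reg G\ge \reg H_i(G)-i$ for each fixed $i\ge 1$. Fix such an $i$ and pick a minimal generating set $\bsx$ of $\mm$ by linear forms, then adjoin a \emph{generic} linear form $z=y$ (working over the algebraic closure of $k$, or after a faithfully flat base change, which does not change regularity). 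Because $\dim H_j(G)\le 1$ for all $j\ge 1$, for a generic $z$ the modules $H_j(G)/zH_j(G)$ and $(0:_{H_j(G)}z)$ have finite length for all $j\ge 1$, and $H_0(G)/zH_0(G)$ is a module over $R/zR$ which is (after also adjoining $\bsx$) handled by finite length. Then by Theorem \ref{thm_finite}, $\reg G=\kreg^{\bsx\sqcup\{z\}}_R G$, and I would compute $H_\ell(\bsx,\{z\};G)=H_\ell(\{z\};K[\bsx;G])$ via the short exact sequences relating the Koszul homology on $\bsx\cup\{z\}$ to that on $\bsx$ and multiplication by $z$ — i.e.\ the long exact sequence $0\to H_\ell(\bsx;G)/zH_\ell(\bsx;G)\to H_\ell(\bsx\cup\{z\};G)\to (0:_{H_{\ell-1}(\bsx;G)}z)\to 0$.

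The key point is then to track a nonzero class. Suppose $\reg H_i(G)-i=r$, witnessed by $H^a_\mm(H_i(G))_{r-a}\ne0$ for suitable $a$; equivalently, by Lemma \ref{lem_kreg} applied to the module $H_i(G)$, there is a nonzero Koszul homology class of $H_i(G)$ in internal degree $r+(\text{homological degree})$ that survives. Using that $\dim H_i(G)\le 1$ together with the hyperplane section $z$, multiplication by $z$ on $H_i(G)$ has finite-length kernel and cokernel, and the two-term filtration of $G$ whose graded pieces (in the derived sense) are $H_i(G)[i]$ and the rest lets me feed a class detecting $\reg H_i(G)$ into $H_\ast(\bsx\cup\{z\};G)$ without cancellation: the spectral sequence / filtration argument shows the contribution of $H_i(G)$ to $\kreg^{\bsx\cup\{z\}}_R G$ cannot be killed by the other homology modules, precisely because those, being $1$-dimensional, interact with $z$ only in finite length and in controlled internal degrees. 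I expect the main obstacle to be exactly this non-cancellation step: one must argue that the Koszul homology class realizing $\reg H_i(G)$ is not a boundary coming from $H_{i+1}(G)$ nor bounded by a cycle mapping to $H_{i-1}(G)$, and here the genericity of $z$ and the dimension hypothesis are used in an essential way, via the finite-length control in the second inequality of Theorem \ref{thm_finite}'s proof (the bound $\ell(H_i(K)_j)\le\sum_{r=0}^{t-1}\ell(H_{i+1}(K')_{j+r+1})$). Carefully setting up the bookkeeping of internal degrees through these finitely many finite-length pieces — choosing $z$ generic enough that it is a nonzerodivisor modulo the relevant associated primes of positive dimension — is the technical heart, and everything else is an induction and the translation $\reg=\exreg=\kreg$ already established.
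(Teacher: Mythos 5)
Your easy direction is fine, and your instinct to reduce to the Koszul-homology characterization (Theorem \ref{thm_finite}) is reasonable, but the proposal has a genuine gap at exactly the point you yourself flag as ``the technical heart'': the non-cancellation step is asserted, not proved. Saying that ``the spectral sequence / filtration argument shows the contribution of $H_i(G)$ to $\kreg^{\bsx\cup\{z\}}_R G$ cannot be killed by the other homology modules'' is just a restatement of the inequality $\reg G\ge \reg H_i(G)-i$, which is the entire content of the theorem; no mechanism is given for why the hypothesis $\dim H_j(G)\le 1$ forces the witnessing class to survive. This is not a routine omission: the inequality genuinely fails once a homology module of dimension $\ge 2$ sits in the wrong spot (see Example \ref{ex_dim2} and Remark \ref{rem_linear}), so any correct argument must use the dimension bound quantitatively, and your sketch never does. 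Two further worries: your reduction of the case $i=0$ to the truncation $G_{\ge 1}$ is circular as stated (the triangle only gives $\reg H_0(G)\le\max\{\reg G,\reg G_{\ge1}-1\}$, and bounding $\reg G_{\ge1}$ by $\reg G$ uses the triangle in the other direction, which reintroduces $\reg H_0(G)$); and the ``partial Koszul regularity'' bookkeeping you allude to is, in the paper, a consequence of the very theorem you are proving (Corollary \ref{cor_Koszul}, equality case), so it cannot be invoked here.

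The paper's actual proof is self-contained and runs through local cohomology rather than hyperplane sections. After reducing to a minimal free complex over a polynomial ring, the key is Lemma \ref{lem_smalldim}: if $\dim H_i(G)\le i-s$ for all $i>s=\inf G$, then $\reg G\ge\reg H_s(G)-s$. This is proved with the truncated invariants $\reg_m(P)=\sup\{i+j:H^i_{\mm}(P)_j\ne0\ \text{and}\ i\ge m\}$ and the exact sequences $0\to B_i\to G_i\to G_i/B_i\to 0$ and $0\to H_{i+1}\to G_{i+1}/B_{i+1}\to B_i\to 0$; the dimension hypothesis enters precisely as $\reg_{m}H_{j}=-\infty$ for $m>j$, which kills the dangerous term at each stage of an induction over the truncations $G_{\ge j}$, and the process terminates because $\reg_{n+1}(-)\equiv-\infty$. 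The case $i\ge1$ of the theorem then follows by applying that lemma to $G_{\ge i}$ to bound $\reg(G_i/B_i)$ and chasing $s_0,s_1$ of $H_i$ through two more sequences, using $\dim H_i\le1$ to write $\reg H_i=\max\{s_1(H_i)+1,s_0(H_i)\}$. To salvage your route you would need to supply an analogue of this degree-by-degree control --- essentially the saturated-sequence and partial-Koszul-regularity argument of Lemma \ref{lem_cmd1} --- written so as not to presuppose the present theorem.
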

The key step in the proof of this result is accomplished by the following lemma. The latter is of the same flavour as \cite[Theorem 12.1]{Cha2}.
\begin{lem}
\label{lem_smalldim}
Let $G \in \dgrcatb R$ be a complex such that $\dim_R H_i(G)\le i-s$ for all $i>s=\inf G$. Then there is an inequality 
\[
\reg G \ge \reg H_s(G)-s.
\]
\end{lem}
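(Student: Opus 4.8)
\textbf{Proof proposal for Lemma \ref{lem_smalldim}.}

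The plan is to reduce to the case $s=0$ (by shifting $G$, which shifts both sides of the desired inequality by the same amount) and then to find a single Koszul homology class in $H_0(\bsx;G)$ sitting in degree $\reg H_0(G)$, for a suitable choice of linear forms $\bsx$; by Theorem \ref{thm_finite} (or Lemma \ref{lem_kreg} after passing to a polynomial ring), the presence of such a class forces $\reg G=\kreg_R^{\bsx}G\ge \reg H_0(G)$. So assume $s=\inf G=0$, so that $H_i(G)=0$ for $i<0$, $H_0(G)\neq 0$, and $\dim_R H_i(G)\le i$ for all $i>0$.

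First I would pick linear forms $\bsx = x_1,\ldots,x_n$ that generate $\mm$ and are \emph{sufficiently generic} with respect to the modules $H_i(G)$. Concretely, since $\dim H_i(G)\le i$, a generic sequence of $i$ linear forms is a system of parameters (in fact can be taken to be a filter-regular-type sequence) for $H_i(G)$; after passing to an infinite residue field — harmless for regularity and Koszul homology — I can choose $\bsx$ so that for every $i\ge 1$ the truncated sequence $x_1,\ldots,x_i$ already kills $H_i(G)$ up to a module of lower dimension, and iterating, so that the \emph{full} Koszul complex $K[\bsx;-]$ on $H_i(G)$ is highly acyclic in the low homological degrees relevant to us. The point of the dimension hypothesis $\dim H_i(G)\le i$ is exactly that $i$ generic linear forms suffice to control $H_i(G)$; this is the same numerology that appears in \cite[Theorem 12.1]{Cha2}.

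The key step is then a spectral-sequence or hyperhomology computation: the Koszul complex $K[\bsx;G]=K[\bsx;R]\dtensor{R}G$ has a hyperhomology spectral sequence with $E^1$ (or $E^2$) page built from $H_p(\bsx;H_q(G))$ converging to $H_{p+q}(\bsx;G)$. I want to track the bottom corner. The term contributing to $H_0(\bsx;G)$ in the highest internal degree comes from $H_0(\bsx;H_0(G)) = H_0(G)/\mm H_0(G)$ — wait, more precisely from $H_0(\bsx;H_0(G))$, which by genericity and the definition of regularity contains a nonzero element in internal degree $t_0(H_0(G))$ where $\reg H_0(G) = t_0(H_0(G)) - 0$ after suitable bookkeeping; one must check this class is not killed by incoming or outgoing differentials. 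Incoming differentials land from $H_{r}(\bsx; H_{r-1}(G))$ for $r\ge 1$; outgoing differentials go to $H_{-r}(\bsx;H_{r-1}(G))=0$. The genericity of $\bsx$, combined with $\dim H_i(G)\le i$, is designed precisely to make the relevant entries $H_p(\bsx;H_q(G))$ with $q\ge 1$ vanish in the internal degrees at or above $\reg H_0(G) - (\text{appropriate shift})$ that would otherwise cancel our class — here one invokes that for a module $N$ of dimension $\le d$, a generic Koszul homology $H_p(\bsx;N)$ has finite length and is concentrated in internal degrees bounded above by something like $\reg N + p$, while $\reg H_i(G) - i \le \reg H_0(G)$ is NOT assumed, so the argument must instead be local-cohomology-flavoured: replace "$\reg$" bookkeeping by the observation that a generic system of parameters on $H_i(G)$ (length $\le i$, which is $< i+1$, the homological slot) gives a Koszul complex whose top homology vanishes, decoupling the corner.

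\textbf{Main obstacle.} The delicate point — and where I expect to spend most effort — is verifying that the distinguished class in $H_0(\bsx;H_0(G))$ of internal degree realizing $\reg H_0(G)$ genuinely \emph{survives} the spectral sequence, i.e.\ is neither in the image of a differential nor mapped out nontrivially. Differentials out are automatically zero (they target negative Koszul homological degree), so the real work is bounding the internal degrees of $H_{r}(\bsx;H_{r-1}(G))$ for $r\ge 1$ and showing they cannot reach the top degree of our class. This is exactly where $\dim H_{r-1}(G)\le r-1$ enters: $r$ generic linear forms on an $(r-1)$-dimensional module have Koszul homology $H_r$ equal to the module of elements annihilated by all of $\bsx$, which for a generic s.o.p. is supported only in "excess" internal degrees, and one must make the bound on these degrees precise. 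An alternative, possibly cleaner route for this obstacle is to induct on $\sup H(G)$: truncate $G$ via the soft truncation triangle $H_{\sup}(G)[\sup]\to G\to G'\to$, apply the inequality $\reg G\ge \min\{\cdots\}$ coming from Lemma \ref{exreg_homology}/Theorem \ref{localreg_exreg} in the form of an exact-triangle estimate, and use the dimension hypothesis on the top homology to show the top term cannot cancel the contribution of $H_0(G)$. I would present the induction version as the primary proof and keep the spectral sequence as motivation.
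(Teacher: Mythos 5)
There is a genuine gap here, and you have in fact flagged it yourself as the ``main obstacle'' without closing it. Two concrete problems. First, your distinguished class lives in the wrong place: $H_0(\bsx;H_0(G))\cong H_0(G)\otimes_Rk$ only records $t_0(H_0(G))=\sup\{j:(H_0(G)\otimes_Rk)_j\neq 0\}$, and $\reg H_0(G)$ can be strictly larger than this, since the regularity of a module is $\sup_p\{t_p-p\}$ and the supremum need not be attained at $p=0$. So a class of internal degree $\reg H_0(G)$ in $H_0(\bsx;G)$ need not exist at all; you must instead work at $E_{p,0}=H_p(\bsx;H_0(G))$ for the $p$ realizing the regularity, and there the differentials leaving $(p,0)$, whose targets are subquotients of $H_{p-r}(\bsx;H_{r-1}(G))$, are no longer automatically zero. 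Second, and more seriously, the hypothesis $\dim H_{r-1}(G)\le r-1$ gives no bound whatsoever on the internal degrees of $H_{*}(\bsx;H_{r-1}(G))$, i.e.\ no bound on $\reg H_{r-1}(G)$; note that the lemma itself asserts nothing about $\reg H_i(G)$ for $i>s$, and no such bound is available from the hypotheses. Hence the step ``bound the internal degrees of $H_r(\bsx;H_{r-1}(G))$ and show they cannot reach the top degree of our class'' cannot be carried out. Your fallback induction on $\sup H(G)$ via soft truncation fails for the same reason: the exact-triangle estimate only sees the single number $\reg H_{\sup G}(G)-\sup G$, which is not controlled by $\reg G$.

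What the dimension hypothesis actually buys is Grothendieck vanishing, $H^j_{\mm}(H_i(G))=0$ for $j>\dim H_i(G)$, so in particular for $j\ge i-s+1$; and the missing idea is to track not one regularity per object but the truncated invariants $\reg_m(P)=\sup\{i+j:H^i_{\mm}(P)_j\neq 0,\ i\ge m\}$. The paper's proof replaces $G$ by its minimal free resolution, chains the long exact sequences of local cohomology coming from $0\to B_i\to G_i\to G_i/B_i\to 0$ and $0\to H_{i+1}\to G_{i+1}/B_{i+1}\to B_i\to 0$, and observes that each step up the complex raises the cut-off $m$ by one, so the homology module entering at step $i$ contributes only through $\reg_{i+1}H_i(G)$, which is $-\infty$ precisely because $\dim H_i(G)\le i$; the process terminates because $\reg_{n+1}$ of any module over an $n$-dimensional polynomial ring is $-\infty$. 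No genericity or choice of linear forms is needed for this lemma (saturated sequences only enter later, in Lemma \ref{lem_cmd1}, where the analogous bookkeeping is done with partial Koszul regularities $\kreg_\ell$). To salvage your Koszul-homology version you would have to build in an analogous partial bookkeeping; the bare spectral sequence, like the bare triangle inequality, discards exactly the information that makes the statement true.
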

\begin{proof}
It is harmless to assume that $R$ is a polynomial ring and $\inf G=0$. Replacing $G$ by a minimal free resolution of it, we can assume that $G$ is a minimal complex of free $R$-modules
\[
\ldots \to G_j \to G_{j-1} \to \ldots \to G_1 \to G_0\to 0,
\]
and $\reg G=\sup_{i\ge 0}\{\reg G_i-i\}$.  

We have to show that $\reg H_0(G) \le \reg G$. Denote $B_i=\img(G_{i+1}\to G_i), H_i=H_i(G)$. For any $R$-module $P$, denote $\reg_m(P)=\sup\{i+j:H^i_{\mm}(P)_j\neq 0 ~\textnormal{and $i\ge m$}\}$; so $\reg P=\reg_0(P)$. Since $\dim P \le n=\dim R<\infty$, we also have $\reg_m P=-\infty$ for $m>n$.

For each $i\ge 0$, there are exact sequences
\begin{align*}
0\to B_i \to G_i \to G_i/B_i\to 0,\\
0\to H_{i+1} \to G_{i+1}/B_{i+1} \to B_i \to 0,
\end{align*}
so letting $i=0$, we get
\begin{align*}
\reg H_0 \le \max\{\reg G_0, \reg_1 B_0-1\},\\
\reg_1 B_0 \le\max\{\reg_1 G_1/B_1,\reg_2 H_1-1\}.
\end{align*}
Now $\dim H_1\le 1$, so $\reg_2 H_1=-\infty$. Hence $\reg_1 B_0\le \reg_1 G_1/B_1$.

Denote by $G_{\ge i}$ the truncated complex
\[
\ldots \to G_j \to G_{j-1} \to \ldots \to G_{i+1} \to G_i \to 0.
\] 
We have $G_1/B_1=H_0(\shift^{-1}G_{\ge 1})$. Therefore
\[
\reg H_0(G) \le \max\{\reg G_0,\reg_1 H_0(\shift^{-1}G_{\ge 1})-1\}.
\]
Using the condition $\dim H_2\le 2$ and arguing similarly, we get
\[
\reg_1 H_0(\shift^{-1}G_{\ge 1}) \le \max\{\reg_1 G_1,\reg_2 H_0(\shift^{-2}G_{\ge 2})-1\},
\]
which yields
\[
\reg H_0(G) \le \max\{\reg G_0,\reg_1 G_1-1,\reg_2 H_0(\shift^{-2}G_{\ge 2})-2\}.
\]
Continuing in this manner, finally
\begin{align*}
\reg H_0 &\le \max\{\reg G_0,\reg G_1-1,\ldots,\reg_n G_n-n,\reg_{n+1}H_0(\shift^{-n-1}G_{\ge n+1})-n-1\}\\
            &\le \reg G.
\end{align*}
The second inequality holds since $\reg_{n+1}(P)=-\infty$ for all $R$-module $P$. This finishes the proof of the lemma.
\end{proof}
\begin{proof}[Proof of Theorem \ref{thm_dim1}]
Again we can assume that $R$ is a polynomial ring, $G$ is a minimal complex of free $R$-modules
\[
\ldots \to G_d \to G_{d-1} \to \ldots \to G_1 \to G_0\to 0,
\]
and $\reg G=\sup_{i\ge 0}\{\reg G_i-i\}$.

By \cite[Proposition 6.1]{NgV}, it holds that $\reg G\le \sup_{i\ge 0}\{\reg H_i(G)-i\}$. It remains to show that $\reg H_i(G)-i\le \reg G$ for all $i\ge 0$. 

Since $\dim H_i(G)\le 1\le i$ for all $i\ge 1$, Lemma \ref{lem_smalldim} shows that $\reg H_0(G)\le \reg G$.

Fix $1\le i\le d$. Since $\dim H_j(G)\le 1$ for all $j>i$, applying Lemma \ref{lem_smalldim} for the complex $G_{\ge i}$, we get 
\begin{equation}
\label{ineq_G/B}
\reg (G_i/B_i)=\reg H_0(\shift^{-i}G_{\ge i}) \le \sup_{j\ge i}\{\reg G_j-j+i\}.
\end{equation}
Denote $s_i(N)=\max\{j:H^i_{\mm}(N)_j\neq 0\}$ for any $R$-module $N$. From the exact sequences
\begin{align*}
&0\to H_i \to G_i/B_i \to B_{i-1}\to 0,\\
&0\to B_{i-1} \to G_{i-1},
\end{align*}
and the associated exact sequences of local cohomology, we have 
\begin{align*}
s_1(H_i)&\le \max\{s_1(G_i/B_i),s_0(B_{i-1})\},\\
s_0(H_i)&\le s_0(G_i/B_i)\le \reg(G_i/B_i),\\
s_0(B_{i-1})&\le s_0(G_{i-1}) .
\end{align*}
Therefore $s_1(H_i)\le \max\{s_1(G_i/B_i),s_0(G_{i-1})\} \le \max\{\reg (G_i/B_i)-1, \reg G_{i-1}\}$.

But $\dim H_i\le 1$, therefore 
\[
\reg H_i=\max\{s_1(H_i)+1,s_0(H_i)\} \le \max\{\reg (G_i/B_i),\reg G_{i-1}+1\}.
\] 
Combining with \eqref{ineq_G/B}, finally
\[
\reg H_i-i\le \max\{\reg(G_i/B_i)-i,\reg G_{i-1}-(i-1)\} \le \max_{j\ge i-1}\{\reg G_j-j\}\le \reg G.
\] 
The proof is now complete.
\end{proof}
The consequence of Theorem \ref{thm_dim1} which is really needed for the proof of Theorem \ref{thm_cmd1}, is the following statement. The second part of it generalizes the second part of Theorem \ref{thm_finite}.
\begin{cor}
\label{cor_Koszul}
Let $M\in \dgrcatb R$ be a bounded complex. Let $\bsy=y_1,\ldots,y_n \in \mm$ be a sequence of elements of degree $1$. Then
\[
\reg M \le \sup_{i\in \Z} \{\reg H_i(\bsy,M)-i\}.
\]
The equality happens if moreover $\dim H_i(\bsy,M)\le 1$ for all $i\ge 1$.
\end{cor}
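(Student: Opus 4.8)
The plan is to reduce the statement of Corollary~\ref{cor_Koszul} to Theorems \ref{thm_finite} and \ref{thm_dim1} by treating the Koszul complex $K[\bsy;M]$ as the complex $G$ to which those results apply. For the inequality, I would first note that $\bsy$ consists of linear forms, so we may append a minimal generating system $\bsx$ of $\mm$ to get $\bsy'=\bsy\sqcup\bsx$; then $K[\bsy';M]\simeq K[\bsx;K[\bsy;M]]$ up to the usual identifications, and since the entries of $K[\bsx;-]$ are direct sums of shifts, one can use Theorem~\ref{thm_finite} (or rather the already-available inequality $\reg\le \sup_i\{\reg H_i(-)-i\}$ for Koszul homology together with $\reg M=\kreg_R M$) applied to $M$. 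Concretely: $\reg M=\kreg_R M=\sup\{j-i:H_i(\bsy\sqcup\bsx,M)_j\neq 0\}$ by the last part of Theorem~\ref{thm_finite}; then I would bound the right-hand side above by $\sup_i\{\reg H_i(\bsy,M)-i\}$ by viewing $K[\bsy\sqcup\bsx;M]$ as $K[\bsx;\,K[\bsy;M]]$, applying Proposition~\ref{prop_degreewise} (or the analogous $\exreg$/$\Tor$ estimate) to the complex $K[\bsx;G]$ with $G=K[\bsy;M]$, and using that $\reg(K[\bsx;G])\le \sup_i\{\reg G_i-i\}$ while each $G_i$ is a finite direct sum of shifts of copies of $M$ tensored with the homogeneous components of the exterior algebra — so $\reg G_i-i\le \sup_i\{\reg H_i(\bsy,M)-i\}$ after reindexing. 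I expect this bookkeeping with shifts in the Koszul complex to be the main technical obstacle: one must carefully track how the internal degrees in $\bigwedge\bsy$ interact with the homological index $i$ so that the telescoping of shifts lands exactly on $\sup_i\{\reg H_i(\bsy,M)-i\}$ and not something larger.

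For the equality statement under the hypothesis $\dim H_i(\bsy,M)\le 1$ for all $i\ge 1$, the idea is to apply Theorem~\ref{thm_dim1} directly to $G=K[\bsy;M]$. We have $G\in \dgrcatb R$ since $M$ is bounded and $K[\bsy;R]$ is a bounded complex of finite free modules; its homology is $H_i(G)=H_i(\bsy,M)$, which is finitely generated and vanishes for $|i|\gg 0$. The key point is $\inf G$: because the top Koszul homology $H_0(\bsy,M)=M/\bsy M$ (more precisely $H_{\inf M}$ up to shift, but $\bsy\subseteq\mm$) and more to the point $H_i(\bsy,M)=0$ for $i<\inf M$, we have $\inf G=\inf M$, and the hypothesis ``$\dim H_i(G)\le 1$ for all $i\ge 1$" gives $\dim H_i(G)\le 1$ for all $i>\inf G$ once one checks $\inf G\ge 0$; since $\bsy$ consists of elements of $\mm$ the Koszul complex sits in non-negative homological degrees relative to $M$, so after the harmless normalization $\inf M\ge 0$ this is exactly the hypothesis of Theorem~\ref{thm_dim1}. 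Then Theorem~\ref{thm_dim1} yields
\[
\reg G=\sup_{i\in\Z}\{\reg H_i(G)-i\}=\sup_{i\in\Z}\{\reg H_i(\bsy,M)-i\}.
\]
Finally one identifies $\reg G=\reg K[\bsy;M]$ with $\reg M$. This last identification is the second place where care is needed: it follows by appending a minimal generating set $\bsx$ of $\mm$, noting $K[\bsx\sqcup\bsy;M]\simeq K[\bsx;K[\bsy;M]]$, and invoking Theorem~\ref{thm_finite} (which says adjoining linear forms to $\bsy$ does not change the Koszul regularity, provided the relevant homologies have finite length) — but here the homologies $H_i(\bsy,M)$ need not have finite length, only dimension $\le 1$, so one cannot apply Theorem~\ref{thm_finite} verbatim. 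Instead I would argue that $\reg G=\reg K[\bsy;M]$ equals $\reg M$ by the \emph{inequality} part already proved (giving $\reg M\le \sup_i\{\reg H_i(\bsy,M)-i\}=\reg G$) together with the reverse inequality $\reg G\le \reg M$, which holds because $\reg G=\kreg_R^{\bsx\sqcup\bsy}(M)$ whenever the finite-length condition does hold after adjoining $\bsx$, or more robustly because $\sup_i\{\reg H_i(\bsy,M)-i\}\le \reg M$ is itself a known-type estimate: indeed by Proposition~\ref{prop_degreewise} applied to $G=K[\bsy;M]$ viewed with entries that are shifts of $M$, $\reg G\le \sup_i\{\reg M_i^{(\text{Koszul})}-i\}$, and combined with the main inequality of the corollary this forces $\sup_i\{\reg H_i(\bsy,M)-i\}=\reg M=\reg G$. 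Thus both inequalities close and the equality follows.

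In summary, the proof has two ingredients: (1) the degreewise regularity bound Proposition~\ref{prop_degreewise} (and its Koszul-twisted version) to get the upper bound $\reg M\le \sup_i\{\reg H_i(\bsy,M)-i\}$ for an arbitrary linear sequence $\bsy$, via viewing $K[\bsy\sqcup\bsx;M]$ as an iterated Koszul complex; and (2) Theorem~\ref{thm_dim1} applied to $K[\bsy;M]$ to upgrade this to an equality under the dimension hypothesis. The main obstacle, as noted, is the shift-bookkeeping in the iterated Koszul complex — ensuring the telescoping estimate produces precisely $\sup_i\{\reg H_i(\bsy,M)-i\}$ — together with the subtlety that Theorem~\ref{thm_finite} does not literally apply when the intermediate homologies have dimension $1$ rather than $0$, so the identification $\reg K[\bsy;M]=\reg M$ has to be routed through the inequalities rather than through Theorem~\ref{thm_finite} directly.
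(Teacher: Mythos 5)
Your plan for the equality part---apply Theorem \ref{thm_dim1} to $G=K[\bsy;M]$ and identify $\reg K[\bsy;M]$ with $\reg M$---is exactly the paper's, and it works; but the identification is far easier than you make it. Lemma \ref{lem_reg_Koszul} gives $\reg K[x;G]=\reg G+d-1$ for a form $x$ of degree $d$, so for linear forms each factor contributes $0$ and $\reg K[\bsy;M]=\reg M$ in one line, with no finite-length hypothesis and no detour through Theorem \ref{thm_finite}. (Your alternative route $\reg M=\kreg_R M=\kreg_R^{\bsx\sqcup\bsy}M=\kreg_R(K[\bsy;M])=\reg K[\bsy;M]$ via Theorem \ref{thm_finite} and Lemma \ref{lem_kreg} does also close, since $H_i(\bsx\sqcup\bsy,M)$ is annihilated by $\mm$ and hence of finite length; it is just unnecessarily roundabout.)

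The genuine gap is in the first inequality. Having reduced to showing $\reg K[\bsy;M]\le \sup_i\{\reg H_i(\bsy,M)-i\}$, you invoke Proposition \ref{prop_degreewise}, which bounds the regularity of a complex by the regularities of its \emph{terms}, and then assert that $\reg G_i-i\le\sup_i\{\reg H_i(\bsy,M)-i\}$ for the terms $G_i$ of $G=K[\bsy;M]$. This step is circular: the terms of $K[\bsy;M]$ are finite direct sums of twists of (the terms of) $M$, so $\sup_i\{\reg G_i-i\}$ telescopes to $\sup_l\{\reg M_l-l\}$, i.e.\ essentially $\reg M$ itself, and the claimed inequality $\reg G_i-i\le\sup_i\{\reg H_i(\bsy,M)-i\}$ is precisely the statement of the corollary that you are trying to prove. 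Proposition \ref{prop_degreewise} carries no information about Koszul \emph{homology}; applied to $K[\bsy;M]$ it only yields $\reg K[\bsy;M]\le\reg M$, the wrong direction. The missing ingredient is Lemma \ref{exreg_homology} combined with Theorem \ref{localreg_exreg}: for any bounded complex $G$ one has the homology-wise bound $\reg G\le\sup_i\{\reg H_i(G)-i\}$, proved via minimal ${}^*$injective resolutions (this may be what your parenthetical ``analogous $\exreg$ estimate'' was gesturing at, but it must be applied to the homology of $K[\bsy;M]$, not to its terms). With that lemma in hand, $\reg M=\reg K[\bsy;M]\le\sup_i\{\reg H_i(\bsy,M)-i\}$ is immediate, which is the paper's proof.
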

We begin with a result on regularity of the Koszul complex.
\begin{lem}
\label{lem_reg_Koszul}
Let $x\in R$ be a homogeneous form of degree $d\ge 1$ and $G\in \dgrcatb R$ a bounded complex. Then
\[
\reg K[x;G]=\reg G+d-1.
\]
\end{lem}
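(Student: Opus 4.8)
The plan is to derive the formula from the long exact sequence in local cohomology attached to the exact triangle defining the Koszul complex as a mapping cone. First dispose of the degenerate case: if $G\simeq 0$ then $K[x;G]\simeq 0$ and both sides are $-\infty$, so assume $G\not\simeq 0$, in which case $\reg G$ is a finite integer.

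The starting observation is that $K[x;R]$ is the mapping cone of multiplication by $x$ on $R(-d)$, i.e.\ the complex $0\to R(-d)\xrightarrow{\cdot x}R\to 0$ with $R$ in homological degree $0$. Since $K[x;R]$ is a bounded complex of free modules, $-\dtensor{R}G$ applied to it computes $K[x;G]=K[x;R]\dtensor{R}G$, and applied to the canonical triangle of the cone it yields an exact triangle in $\dgrcatb R$
\[
G(-d)\xrightarrow{\cdot x} G\to K[x;G]\to \shift\bigl(G(-d)\bigr).
\]
I would also record the trivial identity $\reg G(-d)=\reg G+d$, immediate from $H^i_{\mm}(G(-d))_j=H^i_{\mm}(G)_{j-d}$.

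Now apply the triangulated functor $\dlc_{\mm}$ to the triangle and pass to the associated long exact sequence
\[
\cdots\to H^i_{\mm}(G(-d))\to H^i_{\mm}(G)\to H^i_{\mm}(K[x;G])\to H^{i+1}_{\mm}(G(-d))\to\cdots .
\]
From the three-term piece $H^i_{\mm}(G)\to H^i_{\mm}(K[x;G])\to H^{i+1}_{\mm}(G(-d))$, the standard estimates for a long exact sequence (the same pattern as in the proof of Lemma~\ref{exreg_seq}) give $\reg K[x;G]\le\max\{\reg G,\ \reg G(-d)-1\}$, and since $d\ge 1$ this maximum equals $\reg G+d-1$. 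For the reverse inequality I use the piece $H^{i-1}_{\mm}(K[x;G])\to H^i_{\mm}(G(-d))\to H^i_{\mm}(G)$, which yields $\reg G(-d)\le\max\{\reg G,\ \reg K[x;G]+1\}$; because $\reg G(-d)=\reg G+d>\reg G$ (here both $G\not\simeq 0$ and $d\ge 1$ are used), this forces $\reg K[x;G]+1\ge \reg G+d$. Combining the two inequalities gives $\reg K[x;G]=\reg G+d-1$.

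I do not anticipate a genuine obstacle here: the only delicate points are the bookkeeping of the homological and internal degree shifts (the conventions in $H^i_{\mm}(M(-d))_j$ and in the mapping cone) and the separation of the trivial case $G\simeq 0$. As an alternative route one could bypass local cohomology entirely, using $\reg=\exreg_R$ from Theorem~\ref{localreg_exreg} together with the exact-triangle analogue of Lemma~\ref{exreg_seq} applied to the same triangle; the computation is formally identical.
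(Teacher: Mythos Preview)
Your proof is correct but takes a genuinely different route from the paper's. The paper replaces $G$ by a minimal free resolution $F$ (implicitly working over a polynomial ring, where $\reg=\reg_R$ can be read off from the resolution), observes that $K[x;F]$ is again a minimal free resolution with $K[x;F]_n=F_{n-1}(-d)\oplus F_n$, and computes term by term:
\[
\reg K[x;G]=\sup_n\max\{\reg F_{n-1}+d-n,\ \reg F_n-n\}=\reg G+d-1.
\]
Your argument instead stays at the level of local cohomology, using the exact triangle $G(-d)\to G\to K[x;G]\to$ and the associated long exact sequence to bound $\reg K[x;G]$ from above and below. The paper's approach is a one-line computation once the free-resolution description of regularity is available; your approach has the advantage of working directly from the definition of $\reg$ via $H^i_{\mm}$, without passing to a polynomial ring or invoking $\reg=\reg_R$ there, and it makes explicit where the hypothesis $d\ge 1$ enters (namely in the strict inequality $\reg G(-d)>\reg G$ that forces the lower bound).
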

\begin{proof}
Let $F$ be a minimal free resolution of $G$, then $K[x;F]=K[x;R]\otimes_R F$ is a minimal free resolution of $K[x;G]$. Now $K[x;F]_n=F_{n-1}(-d)\oplus F_n$, hence 
\[
\reg K[x;G]=\sup_n \max\{\reg F_{n-1}+d-n,\reg F_n-n\}=\reg G+d-1,
\]
as claimed.
\end{proof}
\begin{proof}[Proof of Corollary \ref{cor_Koszul}]
By repeatedly using Lemma \ref{lem_reg_Koszul}, we see that
\[
\reg  M=\reg  K[\bsy;M].
\]
Using Lemma \ref{exreg_homology}, we get
\[
\reg K[\bsy;M]\le \sup_{i\in \Z} \{\reg H_i(\bsy,M)-i\}.
\]
Hence the first statement follows. The second one is a direct consequence of Theorem \ref{thm_dim1}.
\end{proof}

For the proof of Theorem \ref{thm_cmd1}, first we need a technical notion.
\begin{defn}
Let $M$ be a finitely generated $R$-module. A sequence of linear forms $\bsy=y_1,\ldots,y_n$ is called {\em saturated} with respect to $M$ if the following two conditions are satisfied: 
\begin{enumerate}
\item $y_1,\ldots,y_r$ is an $M$-regular sequence where $r=\min\{n,\depth M\}$, and, 
\item the Koszul homology $H_{>0}(\bsy,M)$ has finite length.
\end{enumerate} 
\end{defn}
Recall that if $M$ is a finitely generated $R$-module, then a form $x\in R_d$ is called filter-regular for $M$ (see \cite{Tr}) if the multiplication $M_{i-d}\xrightarrow{\cdot x}M_i$ is injective for $i\gg 0$. An equivalent condition is that $x$ is a regular element with respect to $M/H^0_{\mm}(M)$. If $x$ is filter-regular for $M$ then $0:_M x\subseteq H^0_{\mm}(M)$, hence it has finite length. The following lemma will be useful.
\begin{lem}
\label{lem_saturated}
Assume that $k$ is an infinite field. Let $M,M_1,\ldots,M_d$ be finitely generated $R$-modules (where $d\ge 1$) and $n=\dim M$. Then there exists a system of parameters consisting of $n$ linear forms of $M$ which is simultaneously saturated with respect to $M_1,\ldots,M_d$.
\end{lem}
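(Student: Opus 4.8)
The plan is to construct the required linear forms one at a time, using prime avoidance at each stage to dodge all the "bad" loci simultaneously. First I would reduce to the key observation that the two defining conditions of ``saturated'' are both of the form ``a linear form must avoid finitely many primes''. For condition (1) (being a system of parameters), the classical argument works: having already chosen $y_1,\dots,y_{j-1}$, the set of linear forms $y_j$ that fail to drop the dimension of $M/(y_1,\dots,y_{j-1})M$ — equivalently, that fail to avoid the minimal primes of that quotient of maximal dimension — is a proper subspace of $R_1$ (here is where I need $k$ infinite, so that $R_1$ is not a finite union of proper subspaces). For the regular-sequence requirement built into ``saturated'', I would instead work with the modules $M_\ell/H^0_{\mm}(M_\ell)$ and observe that a linear form regular for all of these (i.e.\ avoiding the associated primes of $M_\ell/H^0_{\mm}(M_\ell)$, none of which is $\mm$) is filter-regular for each $M_\ell$; up to the first $\min\{n,\depth M_\ell\}$ steps we get an honest $M_\ell$-regular sequence, which is enough since we only need condition (1) to hold for $M$ itself while condition (2) is handled next.

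Next I would handle condition (2), finite length of $H_{>0}(\bsy,M_\ell)$. The cleanest route is to note that if $\bsy=y_1,\dots,y_n$ is a system of parameters for $M$ (which we are already arranging), and more generally if $(\bsy)M_\ell$ is $\mm$-primary modulo a module of finite length, then $H_i(\bsy,M_\ell)$ is annihilated by a power of $\mm$ for $i>0$; concretely, $H_i(\bsy,M_\ell)$ is a subquotient of a direct sum of copies of $M_\ell/(\bsy)M_\ell$, and the latter has finite length once $\sqrt{(\bsy)+\Ann M_\ell}=\mm$. So it suffices to choose the $y_j$ so that $\bsy$ is a system of parameters not just for $M$ but for each $M_\ell$ as well — or, more efficiently, so that $\dim M_\ell/(\bsy)M_\ell\le 0$, which again only requires $y_j$ to avoid, at each step, the finitely many primes in $\Supp(M_\ell/(y_1,\dots,y_{j-1})M_\ell)$ of positive dimension. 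Since $\dim M_\ell\le\dim R$ is finite, each such list of primes is finite.

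Assembling the construction: at step $j$ (for $j=1,\dots,n$) I would pick $y_j\in R_1$ avoiding the union of all of the following finite collections of primes: the maximal-dimensional minimal primes of $M/(y_1,\dots,y_{j-1})M$; the positive-dimensional primes in the support of each $M_\ell/(y_1,\dots,y_{j-1})M_\ell$; and, for each $\ell$ with $j\le\depth M_\ell$, the associated primes of $M_\ell/H^0_{\mm}(M_\ell)$ if one also wants the regular-sequence clause for the $M_\ell$ (not strictly needed, but harmless). None of these primes equals $\mm$ except possibly in trivial cases already excluded, so their union of degree-$1$ parts is a finite union of proper $k$-subspaces of $R_1$, hence — $k$ being infinite — does not exhaust $R_1$, and a valid $y_j$ exists. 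After $n=\dim M$ steps, $(y_1,\dots,y_n)$ is a system of parameters for $M$ by construction, $H_{>0}(\bsy,M_\ell)$ has finite length for every $\ell$ by the subquotient argument above, and $y_1,\dots,y_{\min\{n,\depth M_\ell\}}$ is $M_\ell$-regular (in particular for $\ell$ irrelevant here, for $M$ itself), so $\bsy$ is simultaneously saturated with respect to $M_1,\dots,M_d$.

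The main obstacle I expect is purely bookkeeping: making sure that at step $j$ all the ``bad'' sets are genuinely finite and genuinely proper subspaces — in particular that none of the relevant primes is the irrelevant maximal ideal (this is exactly what forces us to pass to $M_\ell/H^0_{\mm}(M_\ell)$ rather than $M_\ell$ when we want regularity, and to discard zero-dimensional support when we want the system-of-parameters / finite-length conclusions) — and that the inductive hypotheses ``$(y_1,\dots,y_{j-1})$ is part of a system of parameters for $M$'' and ``$\dim M_\ell/(y_1,\dots,y_{j-1})M_\ell\le\dim M_\ell-(j-1)$'' are correctly propagated. There is no deep idea beyond iterated prime avoidance; the content is organizing the finitely many avoidance conditions so that they can all be met at once.
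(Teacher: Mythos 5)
Your overall strategy --- building the sequence one linear form at a time by prime avoidance, using that $k$ is infinite so that $R_1$ is not a union of finitely many proper subspaces --- is the same as the paper's. The system-of-parameters part and the regular-sequence part of ``saturated'' are handled essentially as in the paper (modulo a bookkeeping slip: at step $j$ you must avoid the associated primes of $M_\ell/(y_1,\dots,y_{j-1})M_\ell$, not of $M_\ell/H^0_{\mm}(M_\ell)$, to keep the honest regular sequence going).

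The genuine gap is in your verification of condition (2). You argue that $H_{>0}(\bsy,M_\ell)$ has finite length because $H_i(\bsy,M_\ell)$ is annihilated by $(\bsy)+\Ann M_\ell$ and you will arrange $\dim M_\ell/(\bsy)M_\ell\le 0$. But $\bsy$ consists of only $n=\dim M$ linear forms, and the lemma makes no assumption relating $\dim M_\ell$ to $\dim M$; if $\dim M_\ell>n$ then $n$ linear forms cannot cut $M_\ell$ down to dimension $0$, so $(\bsy)+\Ann M_\ell$ is not $\mm$-primary and your annihilator argument collapses. This is not a corner case: in the paper's own application (the proof of Lemma \ref{lem_cmd1}) the auxiliary modules include the free modules $L_i$, of dimension $\dim R$, while $n=\dim H_0(L)$ can be much smaller. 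The paper's fix is to impose, at each step, that $y_j$ be \emph{filter-regular} with respect to all the Koszul homology modules $H_\bullet(y_1,\dots,y_{j-1},M_\ell)$ --- in particular with respect to $M_\ell/(y_1,\dots,y_{j-1})M_\ell$ --- and then to run the long exact sequence of Koszul homology, which yields for each $i\ge 1$ a short exact sequence
\[
0 \to H_i(\bsy_{<j},M_\ell)/y_jH_i(\bsy_{<j},M_\ell) \to H_i(\bsy_{\le j},M_\ell) \to \bigl(0:_{H_{i-1}(\bsy_{<j},M_\ell)}y_j\bigr)(-1)\to 0,
\]
so that finite length of $H_{>0}$ propagates inductively with no hypothesis on $\dim M_\ell$. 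Note that avoiding only the positive-dimensional \emph{minimal} primes of $\Supp\bigl(M_\ell/(y_1,\dots,y_{j-1})M_\ell\bigr)$, as your dimension-cutting goal suggests, is not enough: an embedded associated prime $\pp\neq\mm$ would make $0:_{M_\ell/(y_1,\dots,y_{j-1})M_\ell}y_j$ positive-dimensional for a $y_j\in\pp$, and nothing in your later steps repairs this. You need to avoid all non-maximal associated primes of these quotients (and, in the paper's formulation, of the intermediate Koszul homology modules), which is still a finite list and hence compatible with your prime-avoidance framework.
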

We need the following fact, which seems to be folklore, but for completeness, we will include a proof.
\begin{lem}
\label{lem_param}
Let $R$ be a standard graded $k$-algebra, $M$ a finitely generated $R$-module. If $x$ is a parameter of $R/\Ann(M)$ then it is also a parameter of $M$.
\end{lem}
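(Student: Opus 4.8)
The plan is to reduce the statement to an identity between Krull dimensions of cyclic quotients of $R/\Ann(M)$, using the two standard facts that $\dim M=\dim R/\Ann(M)$ and $\Supp M=V(\Ann M)$ for a finitely generated module $M$. We may assume $M\neq 0$ and $\dim M\geq 1$, since otherwise $R/\Ann(M)$ is Artinian and has no parameters, so the hypothesis is vacuous.

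First I would compute the support of $M/xM$. Since $M$ is finitely generated, $M/xM\cong M\otimes_R R/xR$, and therefore $\Supp(M/xM)=\Supp M\cap\Supp(R/xR)=V(\Ann M)\cap V(xR)=V(\Ann M+xR)$. Consequently, using $\dim N=\dim R/\Ann N$ once more for $N=M/xM$, we get
\[
\dim M/xM=\dim R/(\Ann M+xR)=\dim\bigl((R/\Ann M)/\bar x(R/\Ann M)\bigr),
\]
where $\bar x$ denotes the image of $x$ in $R/\Ann M$.

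Now the hypothesis that $x$ is a parameter of $R/\Ann(M)$ says precisely that $\dim\bigl((R/\Ann M)/\bar x(R/\Ann M)\bigr)=\dim R/\Ann M-1$. Combining this with the displayed equality and with $\dim R/\Ann M=\dim M$ gives $\dim M/xM=\dim M-1$, which is exactly the assertion that $x$ is a parameter of $M$.

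There is essentially no obstacle here; the proof is bookkeeping, and the only point to watch is the identification $\Supp(M/xM)=V(\Ann M+xR)$ together with the degenerate zero-dimensional case just excluded. An equivalent and perhaps more transparent phrasing: for a nonzero finitely generated $R$-module $N$, an element of $\mm$ is a parameter of $N$ if and only if it avoids every minimal prime $\pp$ of $\Supp N$ with $\dim R/\pp=\dim N$; since the minimal primes of $\Supp M$ coincide with those of $R/\Ann M$ (both are the primes minimal over $\Ann M$) and $\dim M=\dim R/\Ann M$, the two parameter conditions are literally identical.
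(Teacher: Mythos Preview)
Your proof is correct and follows essentially the same route as the paper's: both arguments reduce to the identity $\dim M/xM=\dim R/(\Ann M+xR)$ and then invoke the hypothesis. The paper obtains this identity via the inclusion chain $(x)+\Ann(M)\subseteq\Ann(M/xM)\subseteq\sqrt{(x)+\Ann(M)}$, while you obtain it from the support formula $\Supp(M/xM)=V(\Ann M+xR)$; these are two phrasings of the same computation.
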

\begin{proof}
Notice that $(x)+\Ann(M) \subseteq \Ann(M/xM) \subseteq \sqrt{(x)+\Ann(M)}$. Therefore if $x$ is a parameter for $R/\Ann(M)$ then
\begin{align*}
\dim R/\Ann(M)-1=\dim R/((x)+\Ann(M))&=\dim R/\Ann(M/xM)\\
                                     &=\dim M/xM.
\end{align*}
The second equality is due to the fact that $\sqrt{(x)+\Ann(M)}=\sqrt{\Ann(M/xM)}$. In particular $\dim M-1=\dim M/xM$, so $x$ is a parameter for $M$.
\end{proof}
\begin{proof}[Proof of Lemma \ref{lem_saturated}]
Use induction to choose $y_1,\ldots,y_i$ where $i\le n$, starting with $i=-1$. Assume that $0\le i \le n$ and we have chosen $y_1,\ldots,y_{i-1}$. Choose $y_i$ so that it is a parameter of $R/\Ann(M/(y_1,\ldots,y_{i-1})M)$, it is filter-regular with respect to $H_j(y_1,\ldots,y_{i-1},M_r)$ for all $j\le i-1, 1\le r \le d$, regular with respect to $M_r/(y_1,\ldots,y_{i-1})M_r$ as soon as $1\le r\le d$ and $\depth M_r/(y_1,\ldots,y_{i-1})M_r\ge 1$. This is possible by prime avoidance and the condition that $R$ is standard graded and $k$ is infinite. By Lemma \ref{lem_param}, $y_1,\ldots,y_n$ is a system of parameters of $M$. Using the exact sequence of Koszul homology, it it clear that $y_1,\ldots,y_n$ satisfy the required condition.
\end{proof}
The main step in the proof of Theorem \ref{thm_cmd1} is done by
\begin{lem}
\label{lem_cmd1}
Let $L \in \dgrcatb R$ be such that $\depth H_i(L)\ge \dim H_r(L)+i-r$ for all $i<r=\sup L$. Then we have 
$\reg L \ge \reg H_r(L)-r.$ 
\end{lem}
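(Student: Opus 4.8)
The plan is to extract the top homology module $H_r(L)$ from the local cohomology of $L$ by means of the spectral sequence that builds $H^\bullet_\mm(L)$ out of the modules $H^\bullet_\mm(H_i(L))$. First I would replace $L$ by $\shift^{-r}L$: this adds $r$ to both $\sup L$ and $\reg L$ and leaves the hypotheses intact, with $c:=\dim H_r(L)$ unchanged. So we may assume $r=\sup L=0$, the assertion to prove becomes $\reg L\ge\reg H_0(L)$, and the hypothesis reads $\depth H_i(L)\ge c+i$ for every $i<0$.

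Representing $L$ by a bounded complex of finitely generated modules, I would bring in the convergent spectral sequence
\[
E_2^{p,q}=H^p_\mm\bigl(H_{-q}(L)\bigr)\ \Longrightarrow\ H^{p+q}_\mm(L),\qquad d_s\colon E_s^{p,q}\to E_s^{p+s,\,q-s+1}.
\]
Since $\sup L=0$, the rows with $q<0$ vanish, so this is a first-quadrant spectral sequence, and the whole argument lives on its bottom row $q=0$, where $E_2^{p,0}=H^p_\mm(H_0(L))$. No differential leaves $E_s^{p,0}$, because its target lies in row $q=1-s<0$ for $s\ge 2$; a differential entering $E_s^{p,0}$ issues from $E_s^{p-s,\,s-1}$, a subquotient of $E_2^{p-s,\,s-1}=H^{p-s}_\mm\bigl(H_{1-s}(L)\bigr)$, where $1-s\le -1$. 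This is exactly where the hypothesis is used: for $s\ge 2$ one has $1-s<0$, hence $\depth H_{1-s}(L)\ge c+1-s$, and therefore $H^{p-s}_\mm(H_{1-s}(L))=0$ whenever $p-s<c+1-s$, i.e.\ whenever $p\le c$. Consequently, for every $p\le c$ all incoming differentials to the bottom row vanish, so $E_\infty^{p,0}=E_2^{p,0}=H^p_\mm(H_0(L))$; and, being the bottom row of a first-quadrant spectral sequence, $E_\infty^{p,0}$ is the last step $F^pH^p_\mm(L)$ of the finite filtration on $H^p_\mm(L)$, in particular a graded submodule of $H^p_\mm(L)$.

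To finish, I would choose $p$ and $j$ with $H^p_\mm(H_0(L))_j\ne 0$ and $p+j=\reg H_0(L)$; by Grothendieck vanishing $H^p_\mm(H_0(L))=0$ for $p>\dim H_0(L)=c$, so such a pair exists with $p\le c$. The previous step then gives $H^p_\mm(L)_j\ne 0$, whence $\reg L\ge p+j=\reg H_0(L)$; undoing the shift yields $\reg L\ge\reg H_r(L)-r$ in general.

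The delicate point is just the spectral-sequence bookkeeping: one must be sure that the top homology $H_r(L)$ occupies the row with no outgoing differentials, so that every obstruction to lifting a class of $H^p_\mm(H_r(L))$ to $H^p_\mm(L)$ comes from a strictly lower homology $H_{1-s}(L)$ with $s\ge 2$ — and these are precisely the modules that the depth hypothesis annihilates in cohomological degree $p-s$ once $p\le c$; everything else is formal. If one prefers to avoid spectral sequences, the same proof runs by peeling off the homology of $L$ one module at a time from the top down, through the canonical truncation triangles and the induced long exact sequences of local cohomology, using the depth hypothesis to kill the error terms at each stage; this is the exact dual, for bounded-above complexes and top homology, of the truncation argument used to prove Lemma \ref{lem_smalldim}.
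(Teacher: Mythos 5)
Your argument is correct, and it takes a genuinely different route from the paper's. You run the hypercohomology spectral sequence $E_2^{p,q}=H^p_\mm(H_{-q}(L))\Rightarrow H^{p+q}_\mm(L)$, note that after normalizing $\sup L=0$ the top homology occupies the bottom row (no outgoing differentials), and use the depth hypothesis exactly to kill the incoming differentials from $H^{p-s}_\mm(H_{1-s}(L))$ in the range $p\le c=\dim H_0(L)$, which by Grothendieck vanishing is the only range needed to detect $\reg H_0(L)$; the edge inclusion $E_\infty^{p,0}\hookrightarrow H^p_\mm(L)$ then finishes the proof (and in fact even knowing $E_\infty^{p,0}$ only as a graded subquotient of $H^p_\mm(L)$ would suffice at this last step, so that identification is not load-bearing). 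The paper instead works with Koszul homology: it reduces to a minimal free complex over a polynomial ring with $k$ infinite, chooses a system of parameters of $H_0(L)$ consisting of linear forms that is saturated with respect to all the modules $B_i,Z_i,L_i,H_i$ (Lemma \ref{lem_saturated}), introduces the partial Koszul regularities $\kreg_\ell$, and peels the complex from the top via the long exact sequences of Koszul homology; there the depth hypothesis forces $H_j(\bsy,H_{-s})=0$ for $j\ge s+1$, i.e.\ $\kreg_{s+1}H_{-s}=-\infty$, which is precisely the analogue of your vanishing of incoming differentials. Your route avoids the infinite-field reduction, the generic choice of linear forms and the saturatedness bookkeeping, at the cost of invoking the spectral sequence; the paper's route stays inside the Koszul-homology framework it has already built (Theorem \ref{thm_finite}, Corollary \ref{cor_Koszul}) and its ``partial regularity'' mechanism is reused almost verbatim in the proof of Theorem \ref{thm_cmd1}. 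One cosmetic slip: the shift you apply normalizes $\sup L$ to $0$ (so it \emph{subtracts} $r$ from $\sup L$) while adding $r$ to $\reg L$; saying it ``adds $r$ to both'' is off by a sign, but the normalization is of course harmless and the rest of your bookkeeping is consistent.
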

\begin{proof}
It is harmless to assume that $k$ is an infinite field, $R$ is a polynomial ring, and $\sup L=0$. Denote $p=\inf L$. Let $F$ be a minimal free resolution of $L$ such that $F_i=0$ for $i<p$. Replacing $L$ by the complex
\[
0\to \frac{F_0}{\img(F_1\to F_0)} \to F_{-1} \to \ldots \to F_{p+1} \to F_p \to 0,
\]
we can assume that $L$ is a bounded complex
\[
L: 0\to L_0\to L_{-1} \to \ldots \to L_{p+1} \to L_p \to 0,
\]
and $\reg L=\max_{p\le i\le 0}\{\reg L_i-i\}$. 

Denote $B_i=\img(L_{i+1}\to L_i), Z_i=\Ker(L_i\to L_{i-1})$ and $H_i=H_i(L)$. We will show that $\reg H_0\le \reg L.$

If $\dim H_0=0$ then we have $\reg H_0\le \reg L_0 \le \reg L$, so it is enough to consider the case $\dim H_0=n>0$. 

Let $\bsy=y_1,\ldots,y_n$ be a minimal system of parameters consisting of linear forms of $H_0$. Using Lemma \ref{lem_saturated}, we can also choose $\bsy$ to be saturated with respect to $B_i,Z_i,L_i,H_i$ for all $i\le 0$. Define the $\ell$th partial Koszul regularity 
$$
\kreg_{\ell} M=\max\{j-i: H_i(\bsy,M)_j\neq 0 ~\textnormal{and $i\ge \ell$}\}
$$ 
for each finitely generated $R$-module $M$ such that $H_{>0}(\bsy,M)$ has finite length and each $\ell \ge 1$. Strictly speaking, $\kreg_{\ell} M$ depends on the sequence $\bsy$. In any case, we have 
$$
\reg M=\max\{\reg H_0(M), \kreg_1 M\}
$$ by Corollary \ref{cor_Koszul} and $\kreg_i M=-\infty$ for all $i>n-\grade(\bsy, M)$.

For each $i\le 0$, we have exact sequences
\begin{align}
0\to Z_i \to L_i \to B_{i-1}\to 0, \label{seq_1}\\
0\to B_{i-1} \to Z_{i-1} \to H_{i-1} \to 0. \label{seq_2}
\end{align}
For $i=0$, looking at the exact sequence of Koszul homology, we deduce
\begin{align}
\reg H_0 \le \max\{\reg L_0, \kreg_1 B_{-1}+1\}, \label{ineq1}\\
\kreg_1 B_{-1}\le \max\{\kreg_1 Z_{-1}, \kreg_2 H_{-1}+1\} \label{ineq2}.
\end{align}
Here is how to get the first inequality. Observe that if $U\to V\to W$ is an exact sequence of finitely generated $R$-modules where $U$ and $V$ have finite length then 
$$
\reg V\le \max\{\reg U,\reg W \}.
$$ Now from the exact sequence
\[
H_1(\bsy,B_{-1})\to H_0(\bsy,H_0)\to H_0(\bsy,L_0)
\]
and the fact that $H_0(\bsy,H_0(L))$ has finite length, we conclude that 
$$
\reg H_0(\bsy,H_0)\le \max\{\reg H_0(\bsy,L_0), \reg H_1(\bsy,B_{-1})\}.
$$
For each $i>0$, similarly, we have
$$
\reg H_i(\bsy,H_0)\le \max\{\reg H_i(\bsy,L_0),\reg H_{i+1}(\bsy,B_{-1})\}.
$$
Therefore the first inequality follows. The second one is obtained by the same trick.

From \eqref{ineq1} and \eqref{ineq2}, we obtain
\[
\reg H_0\le \max\{\reg L_0,\kreg_1 Z_{-1}+1, \kreg_2 H_{-1}+2\}.
\]
Since $\bsy$ is saturated w.r.t.~$H_{-1}$, it holds that $n-\grade(\bsy,H_{-1}) \le \max\{0,n-\depth H_{-1}\}\le 1$. The last inequality follows from $\depth H_{-1} \ge \dim H_0-1$. In particular $H_j(\bsy,H_{-1})=0$ for $j\ge 2$. Hence $\kreg_2 H_{-1}=-\infty$ and thus
\[
\reg H_0\le \max\{\reg L_0, \kreg_1 Z_{-1}+1\}.
\]
For each $i\le 0$, let $L_{\le i}$ be the truncated complex
\[
0\to L_i \to L_{i-1} \to \ldots \to L_p \to 0,
\]
where $L_i$ is in homological position $0$. Then $Z_i=H_0(\shift^{-i}L_{\le i})$, so the above inequality reads 
$$
\reg H_0 \le \max\{\reg L_0, \kreg_1 H_0(\shift^{1}L_{\le -1})+1\}.
$$ 
Arguing with ``partial Koszul regularity" as above, we obtain
\[
\kreg_1 H_0(\shift^{1}L_{\le -1})\le \max\{\kreg_1 L_{-1},\kreg_2 H_0(\shift^{2}L_{\le -2})+1, \kreg_3 H_{-2}+2\}.
\]
The condition $\depth H_{-2}\ge \dim H_0-2$ and the saturatedness of $\bsy$ w.r.t.~ $H_{-2}$ guarantee that $H_j(\bsy, H_{-2})=0$ for $j\ge 3$, so $\kreg_3 H_{-2}=-\infty$. In particular,
\[
\kreg_1 H_0(\shift^{1}L_{\le -1})\le \max\{\kreg_1 L_{-1},\kreg_2 H_0(\shift^{2}L_{\le -2})+1\},
\]
and hence
$$
\reg H_0 \le \max\{\reg L_0, \kreg_1 L_{-1}+1, \kreg_2 H_0(\shift^{2}L_{\le -2})+2\}.
$$
Continuing in this manner, finally 
\begin{align*}
\reg H_0 &\le \max\{\reg L_0,\kreg_1 L_{-1}+1,\ldots,\kreg_n L_{-n}+n\}\\
         &\le \reg L,
\end{align*}
where the first inequality holds since $\kreg_{n+1}H_0(\shift^{n+1}L_{\le -n-1})=-\infty$. This finishes the proof.
\end{proof}
\begin{proof}[Proof of Theorem \ref{thm_cmd1}]
It is harmless to assume that $k$ is an infinite field, $R$ is a polynomial ring and $\sup L=0$. 

Denote $p=\inf L$. We still make the same additional assumptions on $L$ and use the same notations as in the proof of \ref{lem_cmd1}.

By Lemma \ref{exreg_homology}, we get $\reg L\le \max_{p\le i\le 0}\{\reg H_i-i\}$. Therefore it is enough to show that $\reg H_i-i\le \reg L$ for $p\le i\le 0$. We prove by reverse induction on $i\le 0$ the following inequalities
\begin{enumerate}
\item $\reg B_i-i\le \reg L+1$,
\item $\reg L_i/B_i -i \le \reg L,$
\item $\reg H_i-i\le \reg L.$
\end{enumerate} 
Applying Lemma \ref{lem_cmd1} for $L$, we get $\reg H_0\le \reg L$. Also $B_0=0$, hence the conclusion is true for $i=0$.

Take $i<0$. From the exact sequence
\begin{align*}
0\to H_i \to L_i/B_i \to B_{i-1}\to 0,
\end{align*}
we obtain
\begin{align*}
\reg B_{i-1}-(i-1) &\le \max\{\reg L_i/B_i-i+1,\reg H_i-i\}\\
         &\le \reg L+1,
\end{align*}
by induction hypothesis. From the exact sequence 
$$
0\to B_{i-1} \to L_{i-1} \to L_{i-1}/B_{i-1}\to 0,
$$ 
we get
\[
\reg L_{i-1}/B_{i-1}-(i-1)\le \max\{\reg L_{i-1}-(i-1),\reg B_{i-1}-(i-1)-1\} \le \reg L.
\]
Looking at the complex
\[
L': 0\to \frac{L_{i-1}}{B_{i-1}} \to L_{i-2} \to \ldots,
\]
then since $H_j(L')=H_j(L)$ for all $j\le i-1$, by the hypothesis, $L'$ also satisfies the condition of Lemma \ref{lem_cmd1}. Hence 
\begin{align*}
\reg H_{i-1}(L')-(i-1)&=\reg H_{i-1}-(i-1)  \\
                      &\le \reg L'          \\
                      &\le \max\{\reg L_{i-1}/B_{i-1}-(i-1),\reg L_{i-2}-(i-2),\ldots\}.
\end{align*}
The third inequality is a consequence of Proposition \ref{prop_degreewise}. Hence $\reg H_{i-1}-(i-1) \le \reg L$, finishing the induction and the proof.
\end{proof}

\section{Applications}
\label{sect_appl}

As a corollary of the results from Section \ref{sect_bounding_Tor}, we obtain (an important case of) a result originally proved by Chardin using different methods. Notice that this result strengthen Brodmann, Linh and Seiler's Proposition 5.8 in \cite{BLS}. For complexes $M_1,\ldots,M_d$ of $R$-modules (where $d\ge 2$) and $i\in \Z$, denote $\Tor^R_i(M_1,\ldots,M_d)=H_i(F_1\otimes_R \cdots \otimes_R F_d)$ where $F_j$ is a (minimal) free resolution of $M_j$ for each $j$.  
\begin{cor}[Chardin {\cite[Theorem 5.7]{Cha}}]
\label{cor_tensor}
Let $d\ge 2$ and $M_1,\ldots,M_d$ be finitely generated $R$-modules such that at least $d-1$ of them have finite projective dimension. If $\dim \Tor^R_i(M_1,\ldots, M_d)\le 1$ for all $i\ge 1$ then
\[
\max_{i\in \Z}\{\reg \Tor^R_i(M_1,\ldots,M_d)-i\}=\reg M_1+\cdots+\reg M_d-(d-1)\reg R.
\]
\end{cor}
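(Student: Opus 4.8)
The plan is to identify $\max_{i\in\Z}\{\reg\Tor^R_i(M_1,\ldots,M_d)-i\}$ with the absolute regularity of the derived tensor product $G=M_1\dtensor{R}\cdots\dtensor{R}M_d$, to apply Theorem \ref{thm_dim1} to this $G$, and then to evaluate $\reg G$ by iterating the Auslander--Buchsbaum formula of Theorem \ref{AB_exreg}.

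First I would set things up. After relabelling we may assume $M_2,\ldots,M_d$ have finite projective dimension; we may also assume each $M_j\neq 0$, since otherwise both sides of the asserted equality are $-\infty$. Put $G=M_1\dtensor{R}\cdots\dtensor{R}M_d$, so that $H_i(G)=\Tor^R_i(M_1,\ldots,M_d)$ for every $i$ (the $F_j$ are bounded below complexes of free modules, so $F_1\otimes_R\cdots\otimes_R F_d$ computes the derived tensor product, which is associative). One checks that $G\in\dgrcatb R$: the partial product $M_2\dtensor{R}\cdots\dtensor{R}M_d$ has finite projective dimension, hence is represented by a bounded complex of free modules, and tensoring it with $M_1$ preserves boundedness; all homology is finitely generated because $R$ is Noetherian. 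Moreover $H_0(G)=M_1\otimes_R\cdots\otimes_R M_d\neq 0$ by Nakayama's lemma (its tensor with $k$ is $\bigotimes_k M_j/\mm M_j\neq 0$), so $\inf G=0$.

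Next I would invoke Theorem \ref{thm_dim1}. The hypothesis $\dim\Tor^R_i(M_1,\ldots,M_d)\le 1$ for all $i\ge 1$ is exactly the condition $\dim H_i(G)\le 1$ for all $i>\inf G$, so the theorem gives $\reg G=\sup_{i\in\Z}\{\reg H_i(G)-i\}=\max_{i\in\Z}\{\reg\Tor^R_i(M_1,\ldots,M_d)-i\}$, which is the left-hand side. It remains to compute $\reg G$. Set $N_d=M_d$ and, for $2\le j\le d-1$, $N_j=M_j\dtensor{R}N_{j+1}=M_j\dtensor{R}\cdots\dtensor{R}M_d$; each $N_j$ has finite projective dimension and lies in $\dgrcatb R$. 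Applying Theorem \ref{AB_exreg} to the complex $M_j\in\dgrcatb R$ and the complex $N_{j+1}$ of finite projective dimension gives $\reg N_j=\reg M_j+\reg N_{j+1}-\reg R$ for every $j\le d-1$. A descending induction starting from $\reg N_d=\reg M_d$ then yields $\reg N_2=\reg M_2+\cdots+\reg M_d-(d-2)\reg R$. Finally, $G=M_1\dtensor{R}N_2$ with $N_2$ of finite projective dimension, so one more application of Theorem \ref{AB_exreg} gives $\reg G=\reg M_1+\reg N_2-\reg R=\reg M_1+\cdots+\reg M_d-(d-1)\reg R$. Combined with the identification of the previous paragraph, this is the claimed equality.

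I do not expect a serious obstacle here: the substance is carried entirely by Theorem \ref{thm_dim1} and Theorem \ref{AB_exreg}. The only points that demand care are bookkeeping: verifying that $G$ and the intermediate complexes $N_j$ genuinely lie in $\dgrcatb R$ and have finite projective dimension where needed, confirming that $\inf G=0$ so that the dimension hypothesis of Theorem \ref{thm_dim1} coincides exactly with the hypothesis we are given, and using associativity of $\dtensor{R}$ to peel off one tensor factor at a time in the Auslander--Buchsbaum iteration.
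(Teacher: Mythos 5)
Your proposal is correct and follows essentially the same route as the paper: form $G=M_1\dtensor{R}\cdots\dtensor{R}M_d$, compute $\reg G$ by iterating the Auslander--Buchsbaum formula of Theorem \ref{AB_exreg}, and apply Theorem \ref{thm_dim1} using $H_i(G)=\Tor^R_i(M_1,\ldots,M_d)$. The extra checks you supply (that $G\in\dgrcatb R$ and that $\inf G=0$ via Nakayama, so the dimension hypothesis matches that of Theorem \ref{thm_dim1}) are details the paper leaves implicit, and they are correct.
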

\begin{proof}
We can assume that $M_2,\ldots,M_d$ have finite projective dimension. Consider the complex $G=M_1\dtensor{R}M_2 \dtensor{R}\cdots \dtensor{R}M_d \in \dgrcatb R$. Using Auslander-Buchsbaum formula \ref{AB_exreg} and induction, we have $\reg G=\reg M_1 +\reg M_2+\cdots+\reg M_d -(d-1)\reg R$. The desired inequality follows from Theorem \ref{thm_dim1} since $\Tor^R_i(M_1,\ldots,M_d) =H_i(G)$ for all $i$.
\end{proof}
Now we deduce the results advertised in the introduction.
\begin{proof}[Proof of Theorem \ref{thm_tensor}]
This is a direct consequence of Corollary \ref{cor_tensor}.
\end{proof}
\begin{proof}[Proof of Corollary \ref{cor_Homregularity}]
We apply Lemma \ref{lem_cmd1} for the complex $\RHom R(R/I,N)$. If $\Hom_R(R/I,N)=0$ there is nothing to do.  Otherwise, since $\sup \RHom R(R/I,N)=0$ and $\dim \Hom_R(R/I,N)\le 1$, Lemma \ref{lem_cmd1} tells us that
\[
\reg \Hom_R(R/I,N)\le \reg \RHom{R}(R/I,N)=\reg N.
\] 
The last equality follows from Corollary \ref{cor_reg_Hom}.
\end{proof}
\begin{cor}
\label{cor_tor_Koszul}
Let $R$ be a Koszul algebra, $M, N$ be finitely generated $R$-modules such that $\min\{\projdim_R M, \projdim_R N\}<\infty$. If $\dim \Tor^R_i(M,N)\le 1$ for all $i\ge 1$ then we have the following inequalities for relative regularity:
\[
\reg_R M+\reg_R N-\reg R \le \max_{i\ge 0}\{\reg_R \Tor^R_i(M,N)-i\} \le \reg_R M+\reg_R N+\reg R.
\]
If moreover, both $M$ and $N$ have finite projective dimension, then the lower bound can be improved to $\reg_R M+\reg_R N$.
\end{cor}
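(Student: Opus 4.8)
The plan is to reduce everything to the absolute-regularity identity $\max_{i\ge 0}\{\reg\Tor^R_i(M,N)-i\}=\reg M+\reg N-\reg R$, which is exactly Corollary \ref{cor_tensor} in the case $d=2$ — its hypotheses, namely that at least one of $M,N$ has finite projective dimension and that $\dim\Tor^R_i(M,N)\le 1$ for $i\ge 1$, are precisely what we are assuming — and then to pass back and forth between absolute and relative regularity using Theorem \ref{comparison_reg} and Corollary \ref{AB_reg}. Over the Koszul algebra $R$ one has $\reg_R k=0$, so Theorem \ref{comparison_reg} reads $\reg L-\reg R\le\reg_R L\le\reg L$ for every nonzero $L\in\dgrcatb R$; and Corollary \ref{AB_reg} gives $\reg_R L=\reg L-\reg R$ whenever $\projdim_R L<\infty$. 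Note also that $\max_{i\ge 0}\{\reg_R\Tor^R_i(M,N)-i\}$ is a genuine maximum over a finite set of finite numbers, since only finitely many $\Tor^R_i(M,N)$ are nonzero (one of $M,N$ has finite projective dimension) and each is a finitely generated module over the Koszul algebra $R$, hence has finite relative regularity by Theorem \ref{comparison_reg}.

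For the upper bound, applying $\reg_R\Tor^R_i(M,N)\le\reg\Tor^R_i(M,N)$ termwise and then Corollary \ref{cor_tensor} gives $\max_{i\ge 0}\{\reg_R\Tor^R_i(M,N)-i\}\le\reg M+\reg N-\reg R$, and $\reg M+\reg N-\reg R\le(\reg_R M+\reg R)+(\reg_R N+\reg R)-\reg R=\reg_R M+\reg_R N+\reg R$ by Theorem \ref{comparison_reg}. For the lower bound the key is to apply the comparison \emph{before} passing to suprema: for each $i$ we have
\[
\reg\Tor^R_i(M,N)-i\le\bigl(\reg_R\Tor^R_i(M,N)-i\bigr)+\reg R\le\max_{j\ge 0}\{\reg_R\Tor^R_j(M,N)-j\}+\reg R,
\]
so taking the supremum over $i$ and using Corollary \ref{cor_tensor} yields $\max_{j\ge 0}\{\reg_R\Tor^R_j(M,N)-j\}\ge\reg M+\reg N-2\reg R$. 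Assuming, as we may by the symmetry $\Tor^R_i(M,N)\cong\Tor^R_i(N,M)$, that $\projdim_R N<\infty$, Corollary \ref{AB_reg} gives $\reg_R N=\reg N-\reg R$, and combined with $\reg_R M\le\reg M$ (Theorem \ref{comparison_reg}) we obtain $\reg M+\reg N-2\reg R=(\reg M-\reg R)+\reg_R N\ge\reg_R M+\reg_R N-\reg R$, the stated lower bound. If in addition $\projdim_R M<\infty$, then Corollary \ref{AB_reg} also gives $\reg_R M=\reg M-\reg R$, so $\reg M+\reg N-2\reg R=\reg_R M+\reg_R N$ and the bound improves to $\reg_R M+\reg_R N$.

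The argument is essentially bookkeeping, and the only delicate point is exactly this bookkeeping: Theorem \ref{comparison_reg} compares absolute and relative regularity only up to the additive error $\reg R$, and it is Corollary \ref{AB_reg} — available precisely because the module in question has finite projective dimension — that pins this error down and makes the constants match. This is also the reason the sharp lower bound $\reg_R M+\reg_R N$ needs \emph{both} $M$ and $N$ to have finite projective dimension rather than just one. (Alternatively, one could observe the ring-independent identity $\reg_R(M\dtensor{R}N)=\reg_R M+\reg_R N$, obtained by tensoring minimal free resolutions, together with the spectral-sequence inequality $\reg_R G\le\sup_i\{\reg_R H_i(G)-i\}$; but the route above uses only results already established in the paper.)
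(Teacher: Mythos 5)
Your proposal is correct and follows essentially the same route as the paper: reduce to the absolute-regularity identity of Corollary \ref{cor_tensor}, then translate between $\reg$ and $\reg_R$ via Theorem \ref{comparison_reg} (using $\reg_R k=0$) and pin down the constant with Corollary \ref{AB_reg} for the module of finite projective dimension. The paper's own proof is just a terser version of this same bookkeeping (it normalizes to $\projdim_R M<\infty$ where you normalize to $N$, which is immaterial by symmetry of $\Tor$).
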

\begin{proof}
We can assume that $\projdim_R M<\infty$. By Corollary \ref{AB_reg}, we have $\reg M=\reg_R M+\reg R$. Similar identity holds if $N$ has finite projective dimension. In general, using Theorem \ref{comparison_reg}, we get $\reg_R N \le \reg N \le \reg_R N+\reg R$ and  similar inequalities for $\Tor^R_i(M,N)$. Combining with Corollary \ref{cor_tensor} we get the desired statements.
\end{proof}

\begin{rem}
In general, all inequalities in \ref{cor_tor_Koszul} are strict as illustrated by the following examples. The first is taken from \cite[Remark 2.7]{Con}: let $R=k[x,y]/(x^2+y^2), M=R/(x), N=R/(y)$. Then $\projdim_R M=\projdim_R N=1$ and $\reg_R M=\reg_R N=0$. We see that $\Tor^R_0(M,N)\cong k$ and $\Tor^R_1(M,N)\cong k(-2)$. Hence 
\[
\max_{i\ge 0}\{\reg_R \Tor^R_i(M,N)-i\}=1 >\reg_R M+\reg_R N=0.
\]  
On the other hand, if we let $R$ be any Koszul algebra which is not a polynomial ring, $M=k$ and $N$ be any module, then 
$$
\max_{i\ge 0}\{\reg_R \Tor^R_i(k,N)-i\}=\reg_R N < \reg_R k+\reg_R N+\reg R=\reg_R N+\reg R.
$$
\end{rem}
As another application, we have the following result for regularity of $\Ext$.
\begin{cor}
\label{cor_hom}
Let $M, N$ be finitely generated $R$-modules such that $\RHom R(M,N) \in \dgrcatb R$ \textup{(}e.g., $\min\{\projdim_R M,\injdim_R N\}<\infty$\textup{)}. If for all $i> -\sup \RHom R(M,N)$, it holds that $\depth \Ext^i_R(M,N)\ge \dim \Ext^{i-1}_R(M,N)-1$, then
\[
\max_{i\ge 0}\{\reg \Ext^i_R(M,N)+i\}= \reg N-\indeg M.
\]
\end{cor}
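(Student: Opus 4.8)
The plan is to set $L=\RHom R(M,N)$ and to observe that, after a change of indexing, the hypothesis is precisely the hypothesis of Theorem \ref{thm_cmd1}. With the convention $\Ext^i_R(M,N)=H_{-i}(\RHom R(M,N))$, one has $H_j(L)=\Ext^{-j}_R(M,N)$ for every $j\in\Z$; in particular $H_j(L)=0$ for $j>0$, so $\sup L\le 0$ and $-\sup L=\inf\{i:\Ext^i_R(M,N)\neq 0\}$. Writing $j=-i$, the inequality ``$\depth\Ext^i_R(M,N)\ge\dim\Ext^{i-1}_R(M,N)-1$ for all $i>-\sup L$'' turns into ``$\depth H_j(L)\ge\dim H_{j+1}(L)-1$ for all $j<\sup L$'', which is exactly the condition appearing in Theorem \ref{thm_cmd1}. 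Before that I would dispose of the degenerate case $L\simeq 0$: this forces $M=0$ or $N=0$ (for nonzero finitely generated $M,N$ one has $\grade(\Ann M,N)<\infty$, hence some $\Ext^i_R(M,N)\neq 0$), and then both sides of the asserted equality are $-\infty$.

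Next I would record that the standing assumption $\RHom R(M,N)\in\dgrcatb R$ genuinely holds in the advertised cases: if $\projdim_R M<\infty$ a finite free resolution of $M$ computes $\RHom R(M,N)$ as a bounded complex of finitely generated modules, while if $\injdim_R N<\infty$ the homology modules $\Ext^i_R(M,N)$ are finitely generated and vanish outside a bounded range of $i$. Under this assumption Theorem \ref{thm_cmd1} gives
\[
\reg L=\sup_{j\in\Z}\{\reg H_j(L)-j\}=\sup_{i\in\Z}\{\reg\Ext^i_R(M,N)+i\}=\max_{i\ge 0}\{\reg\Ext^i_R(M,N)+i\},
\]
the last step because $\Ext^i_R(M,N)=0$ and $\reg 0=-\infty$ for $i<0$.

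It then remains to identify $\reg L$ with $\reg N-\indeg M$. Here I would invoke Theorem \ref{localreg_exreg} (applicable since $L\in\dgrcatb R$) to rewrite $\reg L=\exreg_R L$, apply Lemma \ref{cor_reg_Hom} with the identity surjection $R\to R$ and $N$ regarded as an object of $\dgrcatn R$ to get $\exreg_R\RHom R(M,N)=\exreg_R N-\indeg M$, and finally apply Theorem \ref{localreg_exreg} once more to replace $\exreg_R N$ by $\reg N$. Chaining the two displayed computations of $\reg L$ completes the proof.

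I do not expect a serious obstacle: every ingredient is already available. The only step that requires real care is the index translation in the first paragraph --- lining up ``$i>-\sup L$'' with ``$j<\sup L$'' and the shift $\Ext^i\leftrightarrow H_{-i}$, $\Ext^{i-1}\leftrightarrow H_{-i+1}$, so that the depth/dimension inequality becomes exactly the one relating consecutive homologies $H_j(L),H_{j+1}(L)$ demanded by Theorem \ref{thm_cmd1} --- together with checking that the two sufficient conditions indeed imply $\RHom R(M,N)\in\dgrcatb R$.
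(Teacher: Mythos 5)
Your proposal is correct and follows essentially the same route as the paper: set $L=\RHom R(M,N)$, use Lemma \ref{cor_reg_Hom} (together with Theorem \ref{localreg_exreg}) to get $\reg L=\reg N-\indeg M$, and apply Theorem \ref{thm_cmd1} after the index translation $H_j(L)=\Ext^{-j}_R(M,N)$. The extra details you supply (the degenerate case $L\simeq 0$ and the verification of boundedness) are correct and merely make explicit what the paper leaves implicit.
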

\begin{proof}
Let $L=\RHom R(M,N)\in \dgrcatb R$. From Lemma \ref{cor_reg_Hom}, we have $\reg L=\reg N - \indeg M$. The equality in question is immediate from Theorem \ref{thm_cmd1}.
\end{proof}
\begin{rem}
\label{rem_ChD}
(i) If $\dim (M\otimes_R N)\le 1$ and $\RHom R(M,N)\in \dgrcatb R$ then
\begin{equation}
\label{eq_Ext}
\sup_{i\ge 0}\{\reg \Ext^i_R(M,N)+i\}= \reg N-\indeg M.
\end{equation}
Indeed, we also have $\dim \Ext^i_R(M,N)\le 1$ for all $i\ge 0$, hence the statement follows from Corollary \ref{cor_hom}. This is a result due to Chardin and Divaani-Aazar \cite[Theorem 4.6(1)]{ChD}, where it is assumed that $R$ is a polynomial ring. As pointed out by an anonymous referee, using Theorem 2.5(5) and the second spectral sequence in Lemma 2.3 in \cite{ChD}, one also obtains \eqref{eq_Ext} without the assumptions that $R$ is a polynomial ring and $\RHom R(M,N)$ has bounded homology.

(ii) Corollary \ref{cor_hom} extends \cite[Theorem 3.10]{Ca} and more generally also \cite[Theorem 4.1(ii)]{ChD}. Indeed, the hypothesis is that for $n=\dim R$ and some integer $i_0$, we have $\Ext^i_R(M,N)=0$ for $i<i_0$, $\dim \Ext^{i_0}_R(M,N)\le n-i_0$ and $\Ext^i_R(M,N)$ is either Cohen-Macaulay of dimension $n-i$ or $0$ for $i>i_0$. This means that $\Ext^i_R(M,N)=0$ also for $i>n$, so $\RHom R(M,N)$ is bounded. The remaining conditions of \ref{cor_hom} are clearly fulfilled. 

Finally, Remark \ref{rem_compare} below shows that Corollary \ref{cor_hom} is indeed stronger than \cite[Theorem 4.1(ii)]{ChD}. 
\end{rem}
We also have the following result; the case $0:_M x$ has finite length, e.g. $x$ is $M$-filter-regular, is classical (see \cite[Proposition 1.2]{CH}).
\begin{cor}
\label{cor_filter_regular}
Let $M$ be a finitely generated $R$-module and $x\in R$ a form of degree $d$. If $\depth M/xM\ge \dim (0:_M x)-1$ then 
\begin{equation*}
\reg M=\max\{\reg (0:_M x), \reg M/xM-d+1\}.
\end{equation*}
\end{cor}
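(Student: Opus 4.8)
The plan is to recognize all three modules in the statement as ingredients of a single two-term complex, namely the Koszul complex $L=K[x;M]$, and then play off Theorem \ref{thm_cmd1} against the Koszul-complex regularity formula of Lemma \ref{lem_reg_Koszul}. We may assume $d\ge 1$ (this is the range in which Lemma \ref{lem_reg_Koszul} applies) and, if $M=0$, there is nothing to prove.

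First I would record the standard identifications. The complex $L=K[x;M]$ is $0\to M(-d)\xrightarrow{\cdot x}M\to 0$ with $M$ sitting in homological degree $0$, so $H_0(L)=M/xM$, $H_1(L)=(0:_M x)(-d)$, and $H_i(L)=0$ otherwise; in particular $\sup L\le 1$. Next I would verify that the hypothesis of the corollary is exactly the hypothesis of Theorem \ref{thm_cmd1} for this $L$. The conditions $\depth H_i(L)\ge\dim H_{i+1}(L)-1$ for $i<\sup L$ are automatic when $i<0$ (then $H_i(L)=0$, and by convention $\depth 0=\infty$), while for $i=0$ the condition reads $\depth M/xM\ge\dim (0:_M x)(-d)-1=\dim(0:_M x)-1$, which is precisely the given assumption.

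Now Theorem \ref{thm_cmd1} yields
\[
\reg L=\sup_{i\in\Z}\{\reg H_i(L)-i\}=\max\{\reg M/xM,\ \reg (0:_M x)(-d)-1\}=\max\{\reg M/xM,\ \reg(0:_M x)+d-1\},
\]
where I used that shifting a finitely generated module $N$ by $(-d)$ raises its regularity by $d$, i.e. $\reg N(-d)=\reg N+d$. On the other hand Lemma \ref{lem_reg_Koszul} gives $\reg L=\reg M+d-1$. Equating these two expressions and subtracting $d-1$ from both sides produces $\reg M=\max\{\reg(0:_M x),\ \reg M/xM-d+1\}$, which is the claimed identity.

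I do not anticipate a genuine obstacle: the corollary is essentially a bookkeeping consequence of Theorem \ref{thm_cmd1}. The only points needing a little care are the internal-degree shift (so that $\reg(0:_M x)(-d)=\reg(0:_M x)+d$) and the convention $\reg 0=-\infty$ in the degenerate case $0:_M x=0$ (there $\sup L$ drops to $0$, but the displayed chain of equalities still goes through, with that term reading $-\infty$), and similarly $M/xM=0$, which by Nakayama forces $M=0$.
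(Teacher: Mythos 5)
Your argument is exactly the paper's first proof of this corollary: apply Theorem \ref{thm_cmd1} to the Koszul complex $K[x;M]$, identify $H_0$ and $H_1$ with $M/xM$ and $(0:_M x)(-d)$, and combine with Lemma \ref{lem_reg_Koszul} to trade $\reg K[x;M]$ for $\reg M+d-1$. The bookkeeping (hypothesis translation, degree shift, degenerate cases) is all correct, so there is nothing to add.
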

\begin{proof}[First proof of Corollary \ref{cor_filter_regular}]
Applying Theorem \ref{thm_cmd1} to the Koszul complex $K[x;M]$, we get
\[
\reg M+d-1=\reg K[x;M]=\max\{\reg H_1(x,M)-1,\reg H_0(x,M)\},
\] 
where the first equality is due to Lemma \ref{lem_reg_Koszul}. Moreover $H_1(x,M)=(0:_M x)(-d)$ and $H_0(x,M)=M/xM$, so we conclude the proof.
\end{proof}
Note that Corollary \ref{cor_filter_regular} is also a special case of Corollary \ref{cor_hom}.
\begin{proof}[Second proof of Corollary \ref{cor_filter_regular}]
Replacing $R$ by a polynomial ring $S$ surjecting onto $R$ and $x$ by a form $y\in S_d$ mapping to $x$, we can assume that $R$ is a polynomial ring. It is harmless to assume that $x\neq 0$. Then $0\to R(-d)\to R\to 0$ is the minimal free resolution of $R/(x)$ over $R$.

Applying Corollary \ref{cor_hom} for the $R$-modules $R/(x)$ and $M$, we have the desired equality. Indeed, $\Ext^1_R(R/(x), M)\cong (M/xM)(d)$ and $\Ext^0_R(R/(x),M)\cong 0:_M x$, and for $i\neq 0,1$, $\Ext^i_R(R/(x),M)=0$. Hence the condition of \ref{cor_hom} is satisfied and 
\[
\reg M-\indeg (R/(x))=\max\{\reg\Ext^1_R(R/(x), M)+1, \reg \Ext^0_R(R/(x),M)\},
\]
or equivalently,
\[
\reg M=\max\{\reg M/xM-d+1,\reg (0:_M x)\},
\]
as desired.
\end{proof}
\begin{ex}
\label{ex_dim2}
Here is an example where one needs the full strength of Corollary \ref{cor_filter_regular}, namely an example with $\dim (0:_M x)\ge 2$. Let $R$ be the determinantal ring of the $2$-minors of
\[
\left(\begin{matrix}
0 & x & z \\
x & y & t
\end{matrix}\right).
\]
Concretely, $R=k[x,y,z,t]/(x^2,xz,xt-yz)$. Now $R/(z)\cong k[x,y,t]/(x^2,xt)$ hence $\depth R/(z)=1$. Moreover, $0:_R z=(\bar{x})\cong [R/(x,z)](-1)$, hence $\dim (0:_R z)=2$. Therefore we can apply Corollary \ref{cor_filter_regular} to conclude that 
\[
\reg R=\max\{\reg R/(z),\reg (0:_R z)\}=\max\{1,1\}=1.
\]
\end{ex}
\begin{rem}
\label{rem_compare}
(i) Our result \ref{cor_hom} is really stronger than Chardin-Divaani-Aazar's \cite[Theorem 4.1(ii)]{ChD}. Indeed, consider the ring $S=k[x,y,z,t]$ and the $S$-modules $S/(z)$ and $R=S/(x^2,xz,xt-yz)$ as in Example \ref{ex_dim2}. The condition of {\it loc.~ cit.} is not satisfied: $\Ext^1_S(S/(z),R)\cong (R/zR)(1)$ has dimension 2 and depth 1, hence it is neither Cohen-Macaulay nor the zero module. On the other hand, $\Ext^0_S(S/(z),R)\cong 0:_R z$ has dimension 2, and the condition of \ref{cor_hom} is satisfied. Therefore, we can apply \ref{cor_hom} to get 
\[
\reg R=\max\{\reg (R/zR), \reg (0:_R z)\}=1,
\]
which was seen in Example \ref{ex_dim2}.

(ii) The presented example simultaneously shows that Theorem \ref{thm_cmd1} is stronger than Theorem \ref{thm_tensor} of Chardin. Indeed, clearly $\Tor^S_1(S/(z),R)\cong (0:_R z)(-1)$ and $\Tor^S_0(S/(z),R)=R/zR$, and $\Tor^S_i(S/(z),R)=0$ for $i\neq 0,1$. Therefore $\dim \Tor^S_1(S/(z),R)=2>1$ and the condition of Theorem \ref{thm_tensor} is not fulfilled. But we still can apply Theorem \ref{thm_cmd1} for the Koszul complex $K[z;R]$ to get 
\[
\reg R=\max\{\reg (R/zR), \reg (0:_R z)\}=1,
\]
since $\depth \Tor^S_0(S/(z),R)=1\ge \dim \Tor^S_1(S/(z),R)-1$.
\end{rem}
\begin{rem}
\label{rem_linear}
Note that in general, the equality in Corollary \ref{cor_filter_regular} is not true if $\depth M/xM\le \dim(0:_M x)-2$. Namely, let $R_n=k[x_1,\ldots,x_n,x_{n+1},y_1,y_2]$ be a polynomial ring and $I_n$ be the ideal of 2-minors of the following matrix
\[
X=\left(\begin{matrix}
0   & x_1 & x_2 & \ldots & x_n & y_1 \\
x_1 & x_2 & x_3 & \ldots & x_{n+1}   & y_2
\end{matrix}\right).
\]
Denote $z=x_{n+1}$. By \cite[Section 5]{C} and \cite[Theorem 4.2]{ZZ}, we have $\reg I_n=2$ and $\reg (I_n+(z))=n+1$, since $I_n+(z)$ is the ideal of $2$-minors of the concatenation matrix with one nilpotent block of size $n+1$ and one scroll block of size $1$. Hence $\reg I_n <\reg (I_n+(z))$ for all $n\ge 2$. For $n=2$, denote $M=T=R/I$ (where $R=R_2,I=I_2$) and $z=x_3$, we have $\depth M/zM=0$ while $\dim(0:_M z)=\dim (I:z)/I=2$. In more details, $(I:z)/I=(y_1^3)T$, hence its annihilator $\Ann_T ((I:z)/I)=(z,x_1,x_2)$, therefore $\dim (I:z)/I=\dim T/(z,x_1,x_2)=\dim k[y_1,y_2]=2$. Hence $\depth M/zM=\dim (0:_M z)-2$ and $\reg M<
\max\{\reg M/zM,\reg (0:_M z)\}$. 

Note that letting $n$ to go to infinity, one also answers in the negative a question due to Caviglia in \cite[Problem 3.7]{PS}, where it was asked whether $\reg (I+(f))$ is bounded by a quadratic function of $\reg I$ for any linear form $f$.
\end{rem}

\section*{Acknowledgements}
We are grateful to Thanh Vu for his useful comments on a previous draft of this paper. We would like to thank an anonymous referee for pointing out a superfluous statement from a previous version, and calling our attention to a paper of Chardin \cite{Cha}. Moreover, we are grateful to them for carefully explaining the relationship between our results and that of \cite{ChD}.

\end{document}